\newcommand{\footrecall}[1]{
} 
\titleformat*{\section}{\large\bfseries}
\titleformat*{\subsection}{\normalsize \bfseries}
\newcommand{\N}{\mathbb{N}}
\newcommand{\C}{\mathcal{C}}
\newcommand{\Cyc}{\text{Cyc}}
\newcommand{\Geo}{\text{Geo}}
\newcommand{\ConjGeo}{\text{ConjGeo}}
\newcommand{\ConjSL}{\text{ConjSL}}
\newcommand{\ConjMinLenSL}{\text{ConjMinLenSL}}
\newcommand{\QG}{\text{QG}}
\newcommand{\CycGeo}{\text{CycGeo}}
\newcommand{\SL}{\text{SL}}
\newcommand{\Rat}{\text{Rat}}
\newcommand{\CF}{\text{CF}}
\newcommand{\mc}{\mathcal}
\theoremstyle{definition}
\newtheorem{theorem}{Theorem}[section]
\newtheorem{corollary}[theorem]{Corollary}
\newtheorem{definition}[theorem]{Definition}
\newtheorem{proposition}[theorem]{Proposition}
\newtheorem{lemma}[theorem]{Lemma}
\newtheorem{example}[theorem]{Example}
\newtheorem{remark}[theorem]{Remark}
\title{Conjugacy languages and conjugacy growth relative to subsets of groups}
\author[1]{André Carvalho}
\affil[1]{Center of Mathematics of the University of Porto\\ Rua do Campo Alegre, s/n\\ 4169-007 Porto, Portugal\\
\texttt{andrecruzcarvalho@gmail.com}}
\author[2]{Ana-Catarina C. Monteiro}
\affil[2]{Center for Mathematics and Applications (NOVA Math), NOVA School of Science and Technology (NOVA FCT)\\
	\texttt{acatarinacm@gmail.com}}
\date{}
\begin{document}

\maketitle

\begin{abstract}
In this paper, we explore conjugacy languages when the base problem is the generalized conjugacy problem (with constraints): given $g\in G$ and $U\subset G$, does $g$ have a conjugate in $U$ (with conjugators in a certain subset)? To do so, for  subsets $U,V\subseteq G$,  we define the corresponding languages $\ConjGeo(U,V)$,  $\CycGeo(U)$, $\ConjSL(U)$ and $\ConjMinLenSL(U,V)$, following the previously studied cases where $U=V=G$. Our results cover several classes of groups: for free groups, we prove that $\ConjGeo(U,V)$ and $\ConjMinLenSL(U,V)$ are regular if $U$ and $V$ are rational subsets; for hyperbolic groups, we show that if $H$ is a quasiconvex subgroup, then $\ConjGeo(H)$ and $\ConjMinLenSL(H)$ are regular; for virtually cyclic groups, we show that $\ConjSL(U)$ is regular if $U$ is rational; and, for virtually abelian groups, we prove that $\ConjGeo(U)$ belongs to a certain class of languages $\C$ when the language of words representing elements of $U$ also belongs to $\C$.
We also define relative conjugacy growth and show that its behavior can be heavily dependent on the choice of subset, proving that, for the free group, it can either be a polynomial of any degree or exponential.
\end{abstract}

 \section{Introduction}

Given a group $G$, two elements $x,y\in G$ are said to be \emph{conjugate} if there is some $z\in G$ such that $x=z^{-1}yz$, in which case we write $x\sim y$. The \emph{conjugacy problem} CP$(G)$ consists of, given $x,y\in G$, deciding whether $x\sim y$ or not. 
The \emph{membership problem}, MP$(G)$, also known as the \emph{generalized word problem}, consists of, given a finitely generated subgroup $H\leq G$ and an element $x\in G$, deciding whether $x\in H$ or not. This can be considered more generally for subsets belonging to a reasonably well-behaved class instead of subgroups (e.g. rational or context-free subsets). The \emph{generalized conjugacy problem with respect to $\mc C$}, GCP$_\mc C(G)$, where $\mc C$ is a class of subsets of $G$, consists then of, given $x\in G$ and $K\in \mc C$, deciding whether there is some $y\in K$ such that $x\sim y$. Clearly, if $\mc C$ contains all singletons (which occurs if $\mc C$ is the class of rational subsets or the class of cosets of finitely generated subgroups), this is indeed a generalization of the conjugacy problem.

In case $\mc C$ is the class of the rational subsets of $G$, which we denote by $\Rat(G)$, or simply by $Rat$ when the context is clear, the following is easy to see, where $\leq$ means that the problem on the left-hand side is reducible to the one on the right-hand side: 
\begin{align*}
&\text{WP}(G)&&\leq &&&\text{MP}_{\Rat}(G)\\
&\quad\rotatebox{270}{$\leq\;\;$}&&  &&&\rotatebox{270}{$\leq\;\;$}\quad\;\;\\
&\text{CP}(G)&&\leq &&&\text{GCP}_{\Rat}(G)
\end{align*}

Additionally, we will also consider versions of the conjugacy problems with certain \emph{constraints} on the conjugators. In \cite{[LS11]}, it is proved that the \emph{generalized conjugacy problem with rational constraints} with respect to rational subsets of finitely generated virtually free groups is decidable, meaning that, given a virtually free group $G$, there is an algorithm taking as input two rational subsets $L,K\in \Rat(G)$ and an element $x\in G$ and decides if there is some $z\in L$ such that $z^{-1}xz\in K$. We remark  that Diekert, Guti\'errez and Hagenah prove in \cite{[DGH05]} that the existential theory of equations with rational constraints in free groups is PSPACE-complete and, if $K_1,K_2\in \Rat(G)$, the statement that there is an element of $K_1$ conjugate to $g$ by an element of $K_2$ can be expressed as: 
$$\exists z\in K_2\, \exists x\in K_1 \,:\, z^{-1}xz=g.$$ 

For virtually free groups (in fact, for quasi-isometrically embeddable rational subsets of hyperbolic groups), similar conclusions were obtained in \cite{[DG10]}. Further, several results on the generalized conjugacy problem for different classes of groups and languages were proved in \cite{[Car22b]}.

Formal languages have been a core tool to study problems of different natures in group theory. 
Regarding the conjugacy problem, there are connections with  formal languages in different directions. In \cite{[HRR11]}, Holt, Rees and R\"over define the conjugacy problem of a group to be the set of ordered pairs of words $(u,v)$ such that $u$ and $v$ are conjugate in $G$ and show that the conjugacy problem of a finitely generated virtually free group is asynchronously index, while virtually cyclic groups are precisely the finitely generated groups having a synchronously one-counter (equivalently, an asynchronously context-free) conjugacy problem. In \cite{[Lev23]}, Levine showed that virtually free groups where precisely those for which every conjugacy class is a context-free subset, in the sense that, if $c$ is a conjugacy class, then the language of all words representing an element in $c$ is  context-free. Similarly, in \cite{[CNB25]}, the authors show that virtually cyclic groups are precisely those for which conjugacy classes are one-counter subsets (in the same sense). 
 These are natural generalizations of the following well-known theorems about the \textit{word problem}, i.e., the language of all words representing the identity element.
 \begin{theorem}\label{Thm:AnisimovMSHerbst}
Let $G$ be a finitely generated group. Then we have:
\begin{enumerate}
\item  The word problem of $G$ is regular $\iff$ $G$ is finite (Anisimov, \cite{[Ani71]}).
\item  The word problem of $G$ is context-free  $\iff$ $G$ is virtually free (Muller \& Schupp, \cite{[MS83]}).
\item The word problem of $G$ is one-counter  $\iff$ $G$ is virtually cyclic (Herbst, \cite{[Her91]}).
\end{enumerate}
\end{theorem}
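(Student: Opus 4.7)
The statement is a compilation of three independent classical results, so the plan is to handle each separately; in each case one direction is a direct automata construction and the other is a structural converse.

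For the constructions, if $G$ is finite with generating set $\Sigma$, then the Cayley graph of $G$ is itself a DFA recognising the word problem: states are elements of $G$, the unique accept state is the identity, and transitions multiply by the labelling generator. If $G$ is virtually free with a finite-index free subgroup $F$, one can build a pushdown automaton whose finite control tracks a coset representative of $F$ in $G$ while its stack holds a freely-reduced word in $F$; the automaton accepts when, after reading the input, the coset is trivial and the stack is empty. The one-counter machine witnessing the easy direction of (3) is analogous with $F$ infinite cyclic: the single counter stores the exponent of a generator of $F$.

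For the converse of (1), the plan is to invoke Myhill--Nerode directly. Any two words $u,v \in \Sigma^*$ that evaluate to the same element of $G$ satisfy $uw \in W(G) \iff vw \in W(G)$ for every $w \in \Sigma^*$, so they are Myhill--Nerode equivalent with respect to $W(G)$. Hence the partition of $\Sigma^*$ into fibres of the evaluation map $\Sigma^* \to G$, which has $|G|$ blocks, refines the Myhill--Nerode partition; if the latter is finite then so is $G$.

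The converses of (2) and (3) are considerably deeper and constitute the main obstacle. For Muller--Schupp, the plan is to exploit a pumping-style argument on a context-free grammar for the word problem to show that the Cayley graph of $G$ admits only finitely many isomorphism classes of \emph{end cones} --- the connected components of the complement of a large ball around the identity. This restriction forces an infinite $G$ to have more than one end, so by Stallings' ends theorem $G$ splits over a finite subgroup, and Dunwoody's accessibility theorem allows the splitting to be iterated only finitely often; the resulting graph of groups has finite edge and finite vertex groups, so $G$ is virtually free. Herbst's characterisation follows the same blueprint, but the severe restriction imposed by a single counter limits the Cayley graph to linear growth with at most two ends, which is precisely the virtually cyclic regime. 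In practice one would cite the original arguments in \cite{[MS83]} and \cite{[Her91]} rather than reproduce the underlying geometric machinery.
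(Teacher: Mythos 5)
The paper does not actually prove this theorem: it is stated as background, with each item attributed to its original source, so the only ``proof'' on offer is the citation trail to \cite{[Ani71]}, \cite{[MS83]} and \cite{[Her91]}. Your decomposition into three independent results, and your deferral of the hard converses of (2) and (3) to the original papers, is therefore consistent with what the paper does; your forward constructions (Cayley-graph DFA, coset-tracking pushdown automaton with a free-reduction stack, a counter holding the exponent with its sign kept in the finite control) are the standard ones and are fine modulo routine care.

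There is, however, a genuine logical gap in your converse of (1). You show that two words evaluating to the same group element are Myhill--Nerode equivalent, conclude that the fibre partition of $\Sigma^*$ under evaluation \emph{refines} the Myhill--Nerode partition, and then infer that finiteness of the Myhill--Nerode partition forces $G$ to be finite. That inference runs the wrong way: if the fibre partition is the finer one, finiteness of the coarser Myhill--Nerode partition says nothing about the number of fibres (the partition into singletons refines every partition). What the converse actually needs is the opposite containment: words representing \emph{distinct} elements must be Myhill--Nerode \emph{inequivalent}. This is exactly where the group structure enters: if $u\pi \neq v\pi$, choose $w$ with $w\pi = (u\pi)^{-1}$; then $uw$ lies in the word problem while $vw$ does not, so $u \not\equiv v$. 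Hence the number of Myhill--Nerode classes is at least $|G|$, and regularity of the word problem forces $G$ to be finite. The direction you proved yields only the easy bound (at most $|G|$ classes), i.e.\ the implication you had already established with the Cayley-graph automaton.
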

 
 In a different direction, \cite{[CHHR16]} introduces several languages consisting of representatives of conjugacy classes in a group, and shows that, when considering languages formed by words representing elements from each conjugacy class (for example, words of minimal length) these languages can exhibit well-behaved properties, even for complex classes of groups. For example, the authors show that the language containing all words of minimal length in each conjugacy classes is regular. Interesting connections between the language-theoretic properties of these languages and the behavior of their conjugacy growth series have been established in \cite{[CHHR16]}.
 
 Regarding the generalized conjugacy problem, in \cite{[LS11]}, Ladra and Silva show that virtually free groups have decidable generalized conjugacy problem with respect to rational subsets. To do so, they prove that, for a virtually free group, an  element $g\in G$ and  a rational subset $U\subseteq G$,  the language of words representing elements that conjugate $g$ into $U$ is regular and effectively constructible. Further, in \cite{[CS24]}, the authors study the doubly generalized conjugacy problem with respect to rational subsets via language-theoretic methods. In fact, when studying these problems, the main objects are not conjugacy classes themselves, but rather unions of the conjugacy classes of elements in a rational subset. For instance, in \cite{[CS24]}, the strategy was to prove that, for a rational subset $U$, 
 the union of all conjugacy classes of elements in $U$ is \emph{in some sense} a context-free subset.

In this context, the goal of this paper is to study conjugacy languages in the sense of \cite{[CHHR16]} for the generalized version of the conjugacy problem (in some cases, with constraints).

First, we introduce the concept of conjugacy growth relative to a subset. It was proved in \cite{[Coo05]} that the cumulative conjugacy growth of a free group is exponential. To highlight the importance of the chosen subset, and the additional complexity arising from the generalized conjugacy problem, we show that, depending on the subset, the relative growth can be a polynomial of any degree. We also prove that the relative conjugacy growth with respect to rational subsets is either polynomial or exponential.

\newtheorem*{thm_crescimento}{Theorem \ref{thm_crescimento}}
\begin{thm_crescimento}
Let $d\in \N$ and $X=\{a,b\}$ be a basis of $F_2$. There exists a rational subset $U_d\in \Rat(F_2)$ such that the growth of $c_{F_2,X,U_d}(n)$ is  polynomial of degree $d-1$ with the associated series being rational. Additonally, there is no rational subset of $F_2$ with intermediate relative conjugacy growth. \end{thm_crescimento}

Now, given $K,L \subseteq G$, define 
$$\alpha(K,L) = \bigcup_{u \in L} u^{-1}Ku.$$
When $L=G$, we simply write $\alpha(K)$ to denote  $\alpha(K,G)$ .
If $X$ is a generating set of $G$, $\tilde X=X\cup X^{-1}$ is a set of monoid generators. Also, as usual, $\pi:\tilde X^*\to G$ will be the canonical surjective homomorphism.
Following \cite{[CHHR16]}, we define, for any subsets $U,V\subseteq G$, the following languages on $\tilde X^*$:
\begin{align*}
\ConjGeo_X(U,V)&=\ConjGeo_X(G)\cap \alpha(U,V)\pi^{-1}\\
\ConjMinLenSL_X(U,V)&=\{w_g\in \Geo_X(\alpha(U,V))\mid |g|=|g|_c\}\\
\ConjSL_X(U)&=\{z_c\in Geo_X(\alpha(U))\mid c \text{ is a conjugacy class}\}.
\end{align*}

To motivate the subsequent work, we present a proposition that connects the \emph{niceness} of $\ConjGeo(U,V)$ for subsets in a given class with the decidability of the generalized conjugacy problem with constraints.

\newtheorem*{prop_mot}{Proposition \ref{prop_mot}}
\begin{prop_mot}
Let $\mc C$ be a class of subsets and $\mc L$ be a class of languages.
	Suppose that the following hold:
	\begin{enumerate}[label=\roman*.]
		\item If $U,V\in \mathcal{C}$, then $\ConjGeo(U,V)\in \mathcal{L}$ and it can be computed;
		\item If $U\in \mc C$ and $g\in G$, then $Ug\in \mc C$  and it can be computed;
		\item $G$ has decidable conjugacy problem;
		\item $\mathcal{L}$ has decidable membership problem.
		\end{enumerate}
		Then, $G$  has a decidable $\mathcal{C}$-generalized conjugacy problem with $\mathcal{C}$-constraints.
\end{prop_mot}
The previous proposition shows that if a group has decidable conjugacy problem, it suffices to show that  if $\mc C$ is a class of subsets such that $\ConjGeo(U)$ is recursive for all $U\in \mc C$, then the group has a decidable $\mc C$-generalized conjugacy problem. We remark that Bodart \cite{[CB25]} provides an example of a group $G$ and a finite generating set $X$ such that $\ConjGeo_X(G)=\ConjGeo_X(G,G)$ is context-free, the word problem is decidable, but the conjugacy problem is not.  

Further, it is proved in \cite{[CS24]} that, for $U,V \in {\rm Rat}\,F_A$, $\alpha(U,V)\pi^{-1}$ is a context-free language. In \cite{[CHHR16]}, it is proved that, if $G$ is a hyperbolic group, then $\ConjGeo(G)$ and $\ConjMinLenSL(G)$ are regular languages, so, for free groups, $\ConjGeo(U,V)=\ConjGeo(G)\cap \alpha(U,V)\pi^{-1}$ is context-free. We show that this language and $\ConjMinLenSL(U,V)$ are, in fact, regular. 

\newtheorem*{ConjGeo_livres}{Theorem \ref{ConjGeo_livres}}
\begin{ConjGeo_livres}
	Let $F_X$ be a free group and $U, V $ be rational subsets of $F_X$. Then $\ConjGeo(U,V)$ and $\ConjMinLenSL(U,V)$ are regular languages.
\end{ConjGeo_livres}

For hyperbolic groups, such a result does not hold. In fact, hyperbolic groups don't have, in general, decidable membership problem, and so the $\Rat$-generalized conjugacy problem is not decidable for hyperbolic groups, from where we deduce  (recall that hyperbolic groups have decidable conjugacy problem) that $\ConjGeo(U)$ does not necessarily belong to a class of languages with decidable membership. In particular, it is not necessarily regular (not even recursive). We show that, if we consider quasiconvex subgroups $H$, where membership is decidable,  the situation changes, and, in this case, $\ConjGeo(H)$ and $\ConjMinLenSL(H)$ are regular. 

\newtheorem*{ConjGeo_hyperb}{Theorem \ref{ConjGeo_hyperb}}
\begin{ConjGeo_hyperb}
If $G$ is a hyperbolic group and $H$ is a quasiconvex subgroup of $G$, then $\ConjGeo(H)$ and $\ConjMinLenSL(H)$ are regular.
\end{ConjGeo_hyperb}

In the case of virtually cyclic groups, we take the analysis a step further. It was proved in \cite{[CS24]} that all rational subsets of virtually cyclic groups (more precisely, of virtually free groups) are the image of a regular language of geodesics. Consequently, for any rational subset $U$ of a virtually cyclic group, it follows directly from the discussion in the hyperbolic case that $\ConjGeo(U)$ and $\ConjMinLenSL(U)$ are regular. Moreover, in this setting, we show that $\ConjSL(U)$ is also regular.

\newtheorem*{thm_virt_cyc}{Theorem \ref{thm_virt_cyc}}
\begin{thm_virt_cyc}
	Let $G$ be a virtually cyclic group. Then for all rational subsets $U$ of $G$, the language $\ConjSL(U)$ is regular.
\end{thm_virt_cyc}

Finally, we turn to virtually abelian groups. Knowing that $\Geo(U)$ (and in particular $\ConjGeo(U)$) do not need to be regular when we consider a rational subset of a virtually abelian group $G$, we focus our generalization on those subsets $U$ for which $\Geo(U)$ belongs to a predefined class of languages. Further,  we assume that $\mc C$ is a full semi-AFL and study the language $\ConjGeo(U)$ for subsets $U \in \mc C^{\forall}(G)$.
\newtheorem*{thm_va}{Theorem \ref{thm_va}}
\begin{thm_va}
	Let $G$ be a virtually abelian group, $N$ be an abelian finite index normal subgroup of $G $, $U$ be a subset of $N$ and $\mc C$ be a class of languages closed for finite unions and the intersection with regular languages. Then there exists a finite generating set $Z$ of $G$ such that if  $\Geo_Z(U)\in \mc C$, then $\ConjGeo_Z(U)$ and $\ConjMinLenSL_Z(U)$ are both languages in $\mc C$.
\end{thm_va}

\newtheorem*{virtab_final}{Theorem \ref{virtab_final}}
\begin{virtab_final}
		Let $\mathcal{C}$ be a class of languages that is full semi-AFL and $G$ a virtually abelian group. We have that if $U$ is a subset of $G$ in $\mc C^{\bullet}(G)$, then $\alpha(U)$ is also a subset in $\mc C^{\bullet}(G)$.
\end{virtab_final}

The paper is organized as follows. In Section 2, we introduce the concepts and results that are essential for the work developed in this article. Section 3 deals with the growth of the number of conjugacy classes relative to subsets and establishes a link between relative conjugacy languages and decidability of the generalized conjugacy problem. Sections 4, 5, 6, and 7 are dedicated to proving language-theoretic properties in free, hyperbolic, virtually cyclic, and virtually abelian groups, respectively. The paper ends with some open questions, which may serve as directions for future research.

\section{Preliminaries}
\subsection{General Concepts}
\label{quasigeodesics preliminaries}

In this paper, all groups considered will be finitely generated and all generating sets will be finite.

Let $G$ be a group, $X$ be a set of generators, $\tilde X=X\cup X^{-1}$ be a set of monoid generators, and $\pi:\tilde X^*\to G$ be the canonical surjective homomorphism. For any word $w\in \tilde X^*$, denote by $l(w)$ the length of $w$ over $X$ and by $w^{[i]}$ the prefix of $w$ with $i$ letters (if $i>l(w)$, then $w^{[i]}=w$).

 For any group element $g$, we define the \textit{length of $g$} with respect to $X$, denoted by $|g|_X$, to be the length of the shortest representative word for $g$ over $X$. When the generating set is clear from context, we will simply write $|g|$. 
  
 The conjugacy class of $g$ in $G$ will be denoted by $[g]$ and we define $|g|_c$, \textit{the minimum length up to conjugacy,} by
$$
|g|_c :=\min\{|h|: h\in [g]\}.
$$
We will often write  $g^h$ to represent $h^{-1}gh$.

A word $w\in \tilde X^*$ is said to be a \textit{geodesic} if $l(w)=|w\pi|$ and a \textit{cyclic geodesic} (or \textit{fully reduced}) if $w$ and all its cyclic permutations are geodesics. We denote the set of all geodesic words over $X$ by $\Geo_X(G)$. For any $w_1,w_2\in X^*$, we will write $w_1=_G w_2$ to say that $w_1\pi=w_2\pi$, i.e., $w_1$ and $w_2$ represent the same group element.

For $\lambda\geq 1$ and $\varepsilon \geq 0$, we define $w$ to be \textit{$(\lambda,\
\varepsilon)$-quasigeodesic} if 
$$
\frac{1}{\lambda}(j-i)  - \varepsilon \leq  d(w^{[i]}\pi,w^{[j]}\pi) \leq \lambda (j-i) +\varepsilon,
$$
for all $0\leq i\leq j \leq l(w)$.
A $(\lambda,\varepsilon)$-quasigeodesic is a \emph{fully $(\lambda,\varepsilon)$-quasireduced word} if $w$ and all of its cyclic permutations are $(\lambda,\
\varepsilon)$-quasigeodesic words.

Further, we fix a total order on the finite set $X$ and consider the shortlex ordering, $<_{sl}$ of $\tilde X^*$. For each $g\in G$, $w_g$ will represent the smallest word in $\tilde{X}^*$ (with respect to  the shortlex ordering) that represents the element $g\in G$. For each conjugacy class $c$, we define the  \textit{shortlex conjugacy normal form} of $c$ to be the shortlex least word $z_c$ over $X$ representing an element of $c$, that is, $z_c\pi\in c$ and $z_c\leq_{sl} w$, for all $w\in X^*$ with $w\pi \in c$. This notation will be kept throughout the paper.

Finally, for any group $G$ and generating set $X$, let $\Gamma_X(G)$ denote the Cayley graph of $G$ with respect to $X$. When the group and generating set under consideration are clear from the context, we will simply write $\Gamma$.

\subsection{Formal languages and group theory}

A word in $\tilde X^*$ is said to be reduced if it has no subwords of the form $xx^{-1}$ or $x^{-1}x$ for any $x\in X$.
We say that a language $L$ is \textit{reduced} if all words in $L$ are reduced and we will denote by $\Cyc(L)$ the set of all cyclic permutations of elements of $L$.  Further, $L$ is \textit{piecewise testable} if it is defined by a regular expression that combines terms of the form $\tilde{X}^*x_1\tilde{X}^*x_2\cdots \tilde{X}^*x_k\tilde{X}^*$ using Boolean operations of union, intersection and complementation, where $k\geq0$  and $x_i\in X$, for every $i\in [k]$.

Let $\mathcal{C}$ be a class of languages. We say that $\mathcal{C}$ is a \textit{cone} if $\mathcal{C}$ is closed under homomorphisms, inverse homomorphisms and intersections with regular languages. If $\mathcal{C}$ is a cone and closed for unions, we say that $\mathcal{C}$ is a \textit{full semi-AFL}.

Beyond the classification of languages, we turn to concepts concerning subsets of groups, where linguistic tools offer insight into their algebraic structure. 
While the study of finitely generated subgroups is a central theme in group theory, arbitrary subsets are too unstructured to yield meaningful results. However, when subsets are defined via appropriate language-theoretic conditions, rich and informative algebraic properties can be obtained.

Let $G$ be a group. Following the notation in \cite{[Her91],[CNB25]}, we write, for any $S\subseteq G$:
\begin{equation*}
	\begin{aligned}
		S\in \mathcal{C}^{\forall}(G) &\text{  if  } S\pi^{-1} \in \mathcal{C}, \\
		S\in \mathcal{C}^{\exists}(G) &\text{  if  } \exists L\in \mathcal{C}:\ L\pi =S,
	\end{aligned}
\end{equation*}

and $\mathcal{C}^{\bullet}$ to represent either $\mathcal{C}^{\forall}$ or $\mathcal{C}^{\exists}$.

We now present three results that will be crucial in Section \ref{virt_abelian}.

\begin{proposition}{\cite[Lemma 4.1]{[Her91]}} \label{prod_rac}
	Let $\mathcal{C}$ be a cone, $G$ be a finitely generated group and $K$ be a rational subset of $G$. Then, we have:
	$$
	L\in \mathcal{C}^{\bullet}(G) \implies LK,\ KL\in \mathcal{C}^{\bullet}(G).
	$$
\end{proposition}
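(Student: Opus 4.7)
The plan is to handle $\mathcal{C}^{\exists}$ and $\mathcal{C}^{\forall}$ separately, in both cases leveraging a regular language $R \subseteq \tilde X^*$ with $R\pi = K$ (which exists since $K \in \Rat(G)$) together with its formal-inverse language $R^{-1} = \{w^{-1} : w \in R\}$, which is also regular and satisfies $R^{-1}\pi = K^{-1}$. The existential case is immediate: if $L' \in \mathcal{C}$ with $L'\pi = L$, then $L'R$ lies in $\mathcal{C}$ by the standard closure of cones under concatenation with a regular language (derived from the three cone axioms via the primed-alphabet trick: inverse-homomorphism into $(\tilde X \cup \tilde X')^*$, intersection with $\tilde X^* R'$, and erasure of primes), and since $(L'R)\pi = LK$ we get $LK \in \mathcal{C}^{\exists}(G)$. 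The symmetric argument with $RL'$ handles $KL$.

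The bulk of the work is the universal case. The key identity I would establish is
$$
(LK)\pi^{-1} \;=\; L\pi^{-1} \,/\, R^{-1} \;=\; \{w \in \tilde X^* : \exists\, v \in R^{-1},\ wv \in L\pi^{-1}\}.
$$
The $\supseteq$ direction is immediate, since $wv \in L\pi^{-1}$ together with $v\pi = k^{-1}$ for some $k \in K$ forces $w\pi = (wv)\pi \cdot k \in LK$. For $\subseteq$, if $w\pi = gk$ with $g \in L$ and $k \in K$, any $v_k \in R$ representing $k$ yields $v = v_k^{-1} \in R^{-1}$ with $(wv)\pi = g \in L$, and thus $wv \in L\pi^{-1}$ by $\pi$-saturation of $L\pi^{-1}$. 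Once the identity is in place, the conclusion follows from the fact that cones are closed under right quotient by a regular language — another routine derivation from the three axioms via the primed-alphabet trick, taking $h^{-1}(L\pi^{-1}) \cap \tilde X^* (R^{-1})'$ for $h(a) = h(a') = a$ and then projecting by $a \mapsto a$, $a' \mapsto \varepsilon$. The case $KL$ is handled symmetrically by the mirror identity $(KL)\pi^{-1} = R^{-1} \backslash L\pi^{-1}$ and the analogous left-quotient closure.

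The main subtlety I would flag is that the tempting identity $(LK)\pi^{-1} = L\pi^{-1} \cdot K\pi^{-1}$ fails in general — for instance in $\mathbb{Z}/2 \times \mathbb{Z}/2$ with $L = \{b\}$ and $K = \{a\}$, the word $ab$ represents $ba \in LK$ but admits no factorization whose two parts represent an element of $L$ and an element of $K$ respectively. Hence one genuinely has to replace the (typically non-regular) set $K\pi^{-1}$ by the specific regular representative $R^{-1}$ in order to land in a cone operation; this is exactly where the rationality of $K$, rather than only the fact that $K\pi^{-1}$ represents $K$, is used.
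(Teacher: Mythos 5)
The paper does not prove this proposition at all --- it is stated as a direct citation of \cite[Lemma 4.1]{[Her91]}, so there is no in-paper argument to compare yours against. Your blind proof is correct and complete, and it is essentially the standard argument one would expect behind Herbst's lemma: the existential case via closure of cones under concatenation with regular languages, and the universal case via the identity $(LK)\pi^{-1}=\{w\in\tilde X^*:\exists v\in R^{-1},\ wv\in L\pi^{-1}\}$ together with closure of cones under (right/left) quotient by regular languages, both closure properties being routine consequences of the three cone axioms exactly as you derive them. Both inclusions in your key identity check out (the $\subseteq$ direction correctly uses $\pi$-saturation of $L\pi^{-1}$, the $\supseteq$ direction correctly uses $R^{-1}\pi=K^{-1}$), and your warning that $(LK)\pi^{-1}\neq L\pi^{-1}\cdot K\pi^{-1}$ in general --- with the $\mathbb{Z}/2\times\mathbb{Z}/2$ counterexample --- is a genuine and well-placed observation: it is precisely why one must quotient by a regular \emph{representative} language $R^{-1}$ rather than concatenate preimages, and it is where the rationality of $K$ is actually used. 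One could shorten the universal case slightly by noting $KL=((L^{-1})(K^{-1}))^{-1}$ and that $\mathcal{C}^{\forall}$ need not be closed under inversion unless $\mathcal{C}$ is closed under reversal, so your decision to treat $KL$ by the mirror (left-quotient) identity rather than by symmetry is the right call.
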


For any group $G$ and any finite index subgroup $H$ of $G$, we define
$$
\mathcal{C}^{\bullet}(G\mid H)=\{K\in \mathcal{C}^{\bullet}(G)\mid K\subseteq H\}.
$$
\begin{theorem}{\cite[Theorem 2.9]{[CNB25]}} \label{prop_restricao}
	Let $G$ be a finitely generated group and $\mathcal{C}$ be a full semi-AFL. Then, for all finite index subgroups $H\leq_{f.i.} G$, we have
	$$
	\mathcal{C}^{\bullet}(G\mid H)=\mathcal{C}^{\bullet}(H).
	$$
\end{theorem}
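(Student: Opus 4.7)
The plan is to reduce $\alpha(U)=\bigcup_{g\in G}g^{-1}Ug$ to a bounded union of products of $\mc C^{\bullet}$-subsets with rational subgroups of $G$. Let $N$ be a finite-index abelian subgroup of $G$; replacing $N$ by its normal core preserves both finiteness of index and abelianness, so I may assume $N$ is normal. Fix a finite transversal $T=\{t_1,\ldots,t_k\}$, so that $G=\bigsqcup_{i=1}^k N t_i$ and every element of $G$ decomposes uniquely as $nt_i$ with $n\in N$. Splitting the conjugators accordingly,
$$\alpha(U)=\bigcup_{i=1}^k \bigcup_{n\in N}(nt_i)^{-1}U(nt_i)=\bigcup_{i=1}^k t_i^{-1}\alpha(U,N)\, t_i.$$
Since $T$ is finite and singletons are rational, Proposition \ref{prod_rac} together with closure of $\mc C$ under finite unions reduces the theorem to proving $\alpha(U,N)\in\mc C^{\bullet}(G)$.

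For the main step I would decompose $U$ along the cosets of $N$: set $U_i:=U\cap Nt_i$, so that $U=\bigsqcup_{i=1}^k U_i$. Because $N$ has finite index, $(Nt_i)\pi^{-1}$ is regular, and the cone property of $\mc C$ yields $U_i\in\mc C^{\bullet}(G)$ for each $i$ (for $\mc C^{\exists}$ one intersects a representing language with $(Nt_i)\pi^{-1}$; for $\mc C^{\forall}$ one uses the corresponding closure of $U\pi^{-1}$ under intersection with regular languages). Now fix $u=mt_i\in U_i$ with $m\in N$ and let $n\in N$. Using that $m$ and $n$ commute inside $N$ and that $N$ is normal in $G$,
$$n^{-1}un=n^{-1}mt_in=m\,n^{-1}t_in=mt_i\bigl(t_i^{-1}n^{-1}t_in\bigr)=u\cdot[t_i,n].$$
The map $f_i:N\to N$ given by $n\mapsto[t_i,n]=t_i^{-1}n^{-1}t_in$ is a group homomorphism (this uses precisely the abelianness of $N$), so its image $N_i:=f_i(N)$ is a subgroup of $N$, and $\alpha(U_i,N)=U_iN_i$.

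To conclude, $N$ is finitely generated (being of finite index in the finitely generated group $G$), and therefore so is $N_i\le N$; in particular, $N_i$ is a rational subset of $G$. Proposition \ref{prod_rac} then gives $U_iN_i\in\mc C^{\bullet}(G)$, and taking the finite union $\alpha(U,N)=\bigcup_{i=1}^k U_iN_i$ keeps us inside $\mc C^{\bullet}(G)$. Combined with the reduction of the first paragraph, this yields $\alpha(U)\in\mc C^{\bullet}(G)$. The delicate point is the identification of the conjugation orbit $\{n^{-1}un:n\in N\}$ with the coset $uN_i$ via the homomorphism $f_i$; both the commutator identity and the fact that $f_i$ respects multiplication rely essentially on $N$ being abelian, without which $\alpha(U_i,N)$ would no longer factor as a product of a $\mc C^{\bullet}$-set with a rational subgroup, and the reduction to Proposition \ref{prod_rac} would break down.
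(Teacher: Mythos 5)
Your proposal does not prove the statement in question. Theorem \ref{prop_restricao} asserts that for any finitely generated $G$, any full semi-AFL $\mathcal{C}$, and any finite index subgroup $H \leq_{f.i.} G$, one has $\mathcal{C}^{\bullet}(G\mid H)=\mathcal{C}^{\bullet}(H)$: a subset $K\subseteq H$ is $\mathcal{C}^{\bullet}$ \emph{as a subset of $G$} (i.e., via languages over a generating set of $G$ and the surjection $\tilde X^*\to G$) if and only if it is $\mathcal{C}^{\bullet}$ \emph{as a subset of $H$} (via languages over a generating set of $H$). The content is an invariance statement about changing the ambient group and generating set; neither $\alpha(U)$, nor conjugation, nor any abelian hypothesis appears in it. Your argument instead proves (a version of) Theorem \ref{virtab_final} — that $U\in\mc C^{\bullet}(G)$ implies $\alpha(U)\in\mc C^{\bullet}(G)$ for virtually abelian $G$ — so the entire proof is aimed at the wrong target. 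Note also that the paper offers no proof of Theorem \ref{prop_restricao}: it is quoted from \cite[Theorem 2.9]{[CNB25]}, and in fact the paper's proof of Theorem \ref{virtab_final} \emph{uses} Theorem \ref{prop_restricao} as an ingredient (to move the $U_i\in\mathcal{C}^{\bullet}(N)$ and the rational subgroup $N(Q_{b_i}^{-1}-I)$ between $N$ and $G$), so your argument cannot be repurposed as a proof of it without circularity concerns in the other direction.

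What an actual proof of $\mathcal{C}^{\bullet}(G\mid H)=\mathcal{C}^{\bullet}(H)$ requires is a translation between words over generators $Y$ of $H$ and words over generators $X$ of $G$: the inclusion $\mathcal{C}^{\bullet}(H)\subseteq\mathcal{C}^{\bullet}(G\mid H)$ comes from rewriting each $y\in \tilde Y$ as a fixed word in $\tilde X^*$ (a monoid homomorphism, hence cone closure applies for $\mathcal{C}^{\exists}$, and an inverse-homomorphism/intersection argument for $\mathcal{C}^{\forall}$, using that $H\pi^{-1}$ is regular because $H$ has finite index); the converse inclusion is the delicate direction and is handled by a Schreier-type rewriting, i.e., a rational transduction converting $X$-words representing elements of $H$ into $Y$-words representing the same elements — full semi-AFLs are closed under rational transductions, which is exactly where the cone axioms are needed. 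None of this machinery is present in your write-up. As a side remark, your internal argument is essentially sound and is in fact a streamlined version of the paper's proof of Theorem \ref{virtab_final}: your commutator identity $n^{-1}un=u\,[t_i,n]$ with $f_i\colon n\mapsto [t_i,n]$ a homomorphism onto $N_i$ is the coordinate-free form of the paper's matrix computation yielding $\alpha(U_ib_i)=\bigcup_{s\in T}[U_iN(Q_{b_i}^{-1}-I)]Q_s\cdot s^{-1}b_is$, and your direct intersection with regular coset preimages even avoids invoking Theorems \ref{prop_restricao} and \ref{prop_fit_ger}. But as a proof of the statement actually posed, it is a complete miss.
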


\begin{theorem}{\cite[Theorem 2.11]{[CNB25]}} \label{prop_fit_ger}
	Let $\mathcal{C}$ be a full semi-AFL. Let $G$ be a finitely generated group, and let $H\leq_{f.i.} G$. Let $\{b_1,\cdots ,b_n\}$ be a right transversal of $H$ in G. Then
	$$
	\mathcal{C}^{\bullet}(G)=\Big\{\bigcup_{i=1}^n L_ib_i \mid L_i\in \mathcal{C}^{\bullet}(H), \forall i\in \{1,\cdots ,n\} \Big\}.
	$$
\end{theorem}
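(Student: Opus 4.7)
The plan is to reduce the problem from $G$ to the abelian finite-index normal subgroup $N$ by way of the coset decomposition of Theorem \ref{prop_fit_ger}, and then exploit two features of the virtually abelian setting: the conjugation action of $G$ on $N$ factors through the finite quotient $G/N$, and for each transversal element $b_i$ the set $\{h^{-1}b_i h : h\in N\}$ turns out to be a coset of a rational subgroup of $N$. Concretely, fix an abelian finite-index normal subgroup $N \trianglelefteq G$ with right transversal $\{b_1,\dots,b_n\}$. By Theorem \ref{prop_fit_ger}, write $U=\bigcup_{i=1}^n L_i b_i$ with $L_i\in\mathcal{C}^{\bullet}(N)$. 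Since $\alpha$ distributes over unions and $\mathcal{C}$ is closed under finite unions, it suffices to prove $\alpha(L_i b_i)\in\mathcal{C}^{\bullet}(G)$ for each fixed $i$.

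Parameterize conjugators as $g = h b_j$ with $h\in N$ and $j\in\{1,\dots,n\}$, and let $\tau_j\in \Aut(N)$ denote the automorphism $x\mapsto b_j^{-1}xb_j$. Since $L_i\subseteq N$ and $N$ is abelian, conjugation by $h$ fixes $L_i$ pointwise, so $g^{-1}L_i g = \tau_j(L_i)$ independently of $h$. A short manipulation using normality of $N$ gives $h^{-1}b_i h = b_i\cdot\tau_i(h)^{-1}h$, and the map $\psi_i\colon h\mapsto \tau_i(h)^{-1}h$ is a homomorphism $N\to N$ (this step uses that $N$ is abelian), so its image $K_i$ is a finitely generated, hence rational, subgroup of $N$. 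Combining these observations, the contribution to $\alpha(L_i b_i)$ from conjugators in the coset $Nb_j$ equals $\tau_j(L_i)\,(b_j^{-1}b_i b_j)\,\tau_j(K_i)$; writing $b_j^{-1}b_i b_j = m_{ij}b_{k(i,j)}$ in the coset decomposition and moving $\tau_j(K_i)$ past $b_{k(i,j)}$ via a further automorphism puts this in the form $M_{ij}\,b_{k(i,j)}$ for an explicit subset $M_{ij}\subseteq N$.

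It remains to verify $M_{ij}\in\mathcal{C}^{\bullet}(N)$ for every pair $(i,j)$; grouping the contributions by transversal element and applying Theorem \ref{prop_fit_ger} in reverse then yields $\alpha(L_i b_i)\in\mathcal{C}^{\bullet}(G)$, and a final finite union over $i$ gives $\alpha(U)\in\mathcal{C}^{\bullet}(G)$. The factor $\tau_j(L_i)$ lies in $\mathcal{C}^{\bullet}(N)$ because each automorphism of the finitely generated group $N$ is realized by a word substitution on a chosen generating set, so closure of $\mathcal{C}$ under both homomorphisms and inverse homomorphisms (granted by $\mathcal{C}$ being a cone) handles the $\exists$ and $\forall$ interpretations of $\mathcal{C}^{\bullet}$ uniformly. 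The remaining factor of $M_{ij}$ is a rational subset of $N$ built from $K_i$, a fixed element and a further automorphism applied to $K_i$, so Proposition \ref{prod_rac} yields $M_{ij}\in\mathcal{C}^{\bullet}(N)$. The main obstacle is the conjugation calculation in the second paragraph, and in particular the identification of $\{h^{-1}b_i h : h\in N\}$ as a coset of a rational subgroup of $N$; once that is in hand, the argument is a bookkeeping exercise with the closure properties of a full semi-AFL.
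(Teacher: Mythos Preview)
Your proposal does not address the stated theorem. Theorem \ref{prop_fit_ger} is the coset-decomposition characterisation of $\mathcal{C}^\bullet(G)$, quoted without proof from \cite{[CNB25]}; your write-up \emph{assumes} it in the very first sentence and then proceeds to prove something else entirely, namely Theorem \ref{virtab_final}, the statement that $\alpha(U)\in\mathcal{C}^\bullet(G)$ whenever $U\in\mathcal{C}^\bullet(G)$ for a virtually abelian $G$. Nothing in your argument touches the actual content of Theorem \ref{prop_fit_ger}.

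Read as a proof of Theorem \ref{virtab_final}, however, your argument is correct and matches the paper's. The paper also decomposes $U=\bigcup_i U_ib_i$ via Theorem \ref{prop_fit_ger}, reduces to showing $\alpha(U_ib_i)\in\mathcal{C}^\bullet(G)$, and computes $g^{-1}kb_ig$ for $g=us$ with $u\in N$, $s\in T$. Writing the conjugation automorphisms as matrices $Q_t$ on $N\cong\mathbb{Z}^m$, it obtains $\alpha(U_ib_i)=\bigcup_{s\in T}[U_i\cdot N(Q_{b_i}^{-1}-I)]Q_s\cdot s^{-1}b_is$. The subgroup $N(Q_{b_i}^{-1}-I)$ is exactly your $K_i$ in additive notation (since $Q_{b_i}^{-1}-I=Q_{b_i}^{-1}(I-Q_{b_i})$ and $Q_{b_i}^{-1}$ is an automorphism of $N$), recognised as the image of an endomorphism of $N$, hence finitely generated and rational; the remainder is Proposition \ref{prod_rac}. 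The only differences are notational: you use $\tau_j$ where the paper uses $Q_s$, and you spell out the cone-closure justification for $\tau_j(L_i)\in\mathcal{C}^\bullet(N)$ where the paper leaves it implicit.
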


A subset $K\subseteq G$ is said to be \emph{rational} if there is some regular language $L\subseteq \tilde X^*$ such that $L\pi=K$. Therefore, if we denote by $Reg$ the class of all regular languages, we have that $K$ is rational if $K\in Reg^{\exists}(G)$.   
We will denote by $\Rat(G)$ 
the class of all rational 
subsets of $G$.
This will be the sole exception to the use of the $\mc C^\bullet$ notation. Rational subsets are particularly important, as they generalize the notion of finitely generated subgroups.

\begin{theorem}{\cite[Theorem III.2.7]{[Ber79]}}	\label{AnisimovSeifert}
Let $H$ be a subgroup of a group $G$. Then $H\in \Rat(G)$ if and only if $H$ is finitely generated.
\end{theorem}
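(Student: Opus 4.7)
The proof will establish the two directions of the equivalence separately. The easy direction is the "if", and the real content is in the "only if" direction, which is the Anisimov--Seifert theorem proper. The plan is to extract a finite generating set from any finite automaton representing $H$, via a Schreier/spanning-tree-style construction, using crucially that $H$ is closed under inverses and products.

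For the ``$\Leftarrow$'' direction I would proceed directly: suppose $H=\langle h_1,\dots,h_k\rangle$ and choose $w_i\in\tilde X^*$ with $w_i\pi=h_i$. Then the set $W=\{w_1,w_1^{-1},\dots,w_k,w_k^{-1}\}\subseteq\tilde X^*$ is finite, so $W^*\subseteq\tilde X^*$ is regular, and $W^*\pi=H$ by definition of the subgroup generated by a set. Thus $H\in Reg^\exists(G)=\Rat(G)$.

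For the ``$\Rightarrow$'' direction, suppose $H\in\Rat(G)$ and let $L\subseteq\tilde X^*$ be a regular language with $L\pi=H$, accepted by a trim DFA $A=(Q,\tilde X,\delta,q_0,F)$. For each state $q\in Q$, choose a word $u_q\in\tilde X^*$ labeling a path from $q_0$ to $q$ (take $u_{q_0}=\epsilon$) and a word $v_q$ labeling a path from $q$ to some accepting state; both exist by trimness. For each transition $e=(p,x,q)$ in $A$, set $t_e=(u_p\,x\,u_q^{-1})\pi\in G$, and for each $f\in F$, set $s_f=u_f\pi$. Let $T=\{t_e:e\in E(A)\}$ and $S=\{s_f:f\in F\}$, both finite. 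The claim is that $H=\langle T\cup S\rangle$.

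To see $T\cup S\subseteq H$: for $s_f$, the path $u_f$ ends at the accepting state $f$, so $u_f\in L$ and $s_f\in H$. For $t_e$ with $e=(p,x,q)$, observe that $u_p\,x\,v_q$ is accepted (it goes $q_0\to p\to q\to\text{accept}$), so $(u_p\,x\,v_q)\pi\in H$; similarly $u_q v_q\in L$ gives $(u_q v_q)\pi\in H$; now $t_e=(u_p\,x\,v_q)\pi\cdot((u_q v_q)\pi)^{-1}\in H\cdot H^{-1}=H$ by the subgroup property. Conversely, any $h\in H$ equals $w\pi$ for some $w=x_1\cdots x_n\in L$ tracing a path $q_0=p_0\to p_1\to\cdots\to p_n=f\in F$, and the telescoping identity
\[
w\pi \;=\; (u_{p_0}x_1u_{p_1}^{-1})\pi\,(u_{p_1}x_2u_{p_2}^{-1})\pi\cdots(u_{p_{n-1}}x_nu_{p_n}^{-1})\pi\,\cdot\,u_{p_n}\pi
\]
expresses $h$ as a product of elements of $T$ followed by $s_f\in S$, so $H\subseteq\langle T\cup S\rangle$. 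The main subtlety, and the step where the subgroup hypothesis is essential, is the verification that $t_e\in H$: the word $u_p\,x\,u_q^{-1}$ itself need not be accepted by $A$ (one generally cannot traverse $u_q$ backwards in $A$), and only the detour through $v_q$ combined with closure of $H$ under $H\cdot H^{-1}$ produces an element certifiably in $H$. This is precisely why the theorem fails for arbitrary rational subsets.
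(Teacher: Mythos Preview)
Your proof is correct and follows the standard Anisimov--Seifert argument: the easy direction is immediate, and for the converse you extract a finite generating set from a trim automaton via the Schreier-style transition elements $t_e$ and the terminal-state elements $s_f$, with the key use of the subgroup property being exactly where you identify it (showing $t_e\in H$ via the detour through $v_q$). The telescoping identity is set up correctly since $u_{q_0}=\epsilon$.

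The paper, however, does not give its own proof of this statement: it is quoted verbatim as \cite[Theorem III.2.7]{[Ber79]} and used as a black box (in the proof of Theorem~\ref{virtab_final}). So there is nothing to compare against; your argument is a clean self-contained proof of the cited result.
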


For a finite set $X$ and any language $L\subseteq \tilde{X}^*$ we define 
$$
\overline{L}:= \{w\in \tilde{X}^*\mid w \text{ is reduced and there exists } u\in L \text{ such that } u\pi=w\pi\}.
$$
Naturally, given a subset $U\subseteq F_X$ a subset of the free group $F_X$, we define
$$
\overline{U}:=\overline{U\pi^{-1}},
$$
i.e., the set of all reduced words that represent elements of $U$.

For any subsets $U,V\subseteq F_X$, we say that $UV$ is \textit{reduced} if $\overline{U}\,\overline{V}$ is a reduced language.

In the free group case, Benois's Theorem provides us with a useful characterization of rational subsets in terms of reduced words representing the elements in the subset.
\begin{theorem}[Benois, \cite{[Ben79]}] \label{benois}
	Let $F_X$ be a finitely generated free group and let $U\subseteq F_X$ be a subset of the free group. Then $U$ is a rational subset of $F_X$ if and only if $\overline{U}$ is a regular language on $X^*$
\end{theorem}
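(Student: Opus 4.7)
The plan is to reduce to a coset-by-coset calculation via the decomposition theorem \ref{prop_fit_ger} and then exploit the abelianness of $N$ to cast the conjugates into a closed form to which the cone closure properties of $\mathcal{C}$ apply.

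Let $N$ be an abelian finite-index normal subgroup of $G$ and fix a right transversal $\{b_1,\ldots,b_n\}$. By Theorem \ref{prop_fit_ger}, write $U = \bigcup_{i=1}^n L_i b_i$ with each $L_i \in \mathcal{C}^{\bullet}(N)$. Since conjugation distributes over unions, $\alpha(U) = \bigcup_i \alpha(L_i b_i)$, so it suffices to prove $\alpha(L_i b_i) \in \mathcal{C}^{\bullet}(G)$ for each $i$. Splitting the conjugators by coset, $\alpha(L_i b_i) = \bigcup_{j=1}^n C_{ij}$, where $C_{ij} = \{v^{-1} x b_i v : x \in L_i,\ v \in Nb_j\}$.

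For $v = mb_j$ with $m \in N$, let $\phi_k \in \Aut(N)$ denote the automorphism $\phi_k(n) = b_k^{-1} n b_k$, which is well-defined on the coset $Nb_k$ because $N$ is abelian. A short calculation --- moving $m$ past $b_i$, commuting inside $N$, and using the transversal normal forms $b_i b_j = c_{ij} b_{k(i,j)}$ and $b_j^{-1} b_{k(i,j)} = d_{ij} b_{\ell(i,j)}$ --- yields
$$
v^{-1}(x b_i) v \;=\; \phi_j(x) \cdot \eta_{ij}(m) \cdot e_{ij} \cdot b_{\ell(i,j)},
$$
where $\eta_{ij} : N \to N$ is a group homomorphism (a twist, by $\phi_j$ and $\phi_{k(i,j)}$, of the commutator-type map $m \mapsto m\phi_i(m)^{-1}$) and $e_{ij} \in N$ is a constant depending only on $i$ and $j$. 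As $m$ ranges over $N$, $\eta_{ij}(m)$ ranges over the subgroup $H_{ij} := \eta_{ij}(N) \leq N$; since $N$ is finitely generated (being of finite index in the finitely generated group $G$), so is $H_{ij}$, and hence $H_{ij}$ is rational in $G$ by Theorem \ref{AnisimovSeifert}. Therefore
$$
C_{ij} \;=\; \phi_j(L_i) \cdot H_{ij} \cdot e_{ij} \cdot b_{\ell(i,j)}.
$$

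Since $\phi_j$ is an automorphism of $N$, encoding it as a monoid homomorphism on words over a generating set of $N$ and invoking the cone closure properties of $\mathcal{C}$ shows $\phi_j(L_i) \in \mathcal{C}^{\bullet}(N)$. Proposition \ref{prod_rac} then gives $\phi_j(L_i) \cdot H_{ij} \cdot e_{ij} \in \mathcal{C}^{\bullet}(N)$. Regrouping the finite union $\bigcup_j C_{ij}$ by the coset index $\ell(i,j)$ --- using closure of $\mathcal{C}$ under finite unions --- expresses $\alpha(L_i b_i)$ exactly in the form prescribed by Theorem \ref{prop_fit_ger}, placing it in $\mathcal{C}^{\bullet}(G)$, and hence so is $\alpha(U)$. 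The main obstacle is the algebraic bookkeeping of the conjugation formula, namely identifying $\phi_j$, $\eta_{ij}$, $e_{ij}$, and $\ell(i,j)$ correctly so the expression really splits as a $\mathcal{C}^{\bullet}(N)$-set times a rational set times a transversal element; a secondary point requiring explicit care is verifying that $\phi_j(L_i) \in \mathcal{C}^{\bullet}(N)$ in both the $\exists$ and $\forall$ flavors, using closure of $\mathcal{C}$ under homomorphism for the former and under inverse homomorphism for the latter.
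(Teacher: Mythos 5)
Your attempt does not prove the statement in question. The statement is Benois's theorem: for a finitely generated free group $F_X$, a subset $U\subseteq F_X$ is rational if and only if the language $\overline{U}$ of reduced words representing elements of $U$ is regular. What you have written is instead a proof of Theorem \ref{virtab_final} (closure of $\mathcal{C}^{\bullet}(G)$ under $U\mapsto\alpha(U)$ for a virtually abelian group $G$): every ingredient you invoke --- the abelian finite-index normal subgroup $N$, the transversal decomposition of Theorem \ref{prop_fit_ger}, the conjugation automorphisms $\phi_j$ and the twisted homomorphism $\eta_{ij}$, Proposition \ref{prod_rac} --- belongs to that result, and none of it bears on free groups, reduced words, or rationality in $F_X$. (For what it is worth, your conjugation bookkeeping closely tracks the computation the paper carries out for Theorem \ref{virtab_final}, where the image subgroup appears as $N(Q_{b_i}^{-1}-I)$; but that is a different theorem, and the paper does not prove Theorem \ref{benois} at all --- it cites it.)

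A proof of the actual statement has two halves. The easy direction: if $\overline{U}$ is regular, then since $\overline{U}\pi=U$, the subset $U$ is rational by definition of rationality. The substantive direction: if $U=L\pi$ for some regular $L\subseteq\tilde{X}^*$ accepted by a finite automaton $\mathcal{A}$, one saturates $\mathcal{A}$ by repeatedly adding an $\varepsilon$-transition from a state $p$ to a state $q$ whenever a path labelled $xx^{-1}$ (with $x\in\tilde{X}$, possibly using previously added $\varepsilon$-edges in between) runs from $p$ to $q$; this terminates because no new states are created and only finitely many edges can be added, and the saturated automaton accepts, for each $w\in L$, every word obtained from $w$ by free reduction --- in particular its reduced form. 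Intersecting its language with the regular set of reduced words then yields exactly $\overline{U}$, which is therefore regular. Nothing in your proposal supplies this saturation argument or any substitute for it, so the gap is total: you have proved a statement about virtually abelian groups rather than the free-group theorem that was asked.
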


We now proceed to define two important classes of groups: automatic and biautomatic groups.

Let $G$ be a group with generating set $X$. An \emph{automatic structure} of $G$ with respect to $X$ is a collection of finite-state automata consisting of a word acceptor and a family of multiplier automata, one for each $x \in X \cup \{1\}$. The word acceptor accepts at least one word in $X^*$ representing each element of $G$, and for each fixed $x \in X \cup \{1\}$, the corresponding multiplier automaton accepts precisely the pairs $(w_1, w_2)$ of words accepted by the word acceptor such that $w_1 x =_G w_2$. A group is said to be \emph{automatic} if it admits an automatic structure.

Automatic groups can also be characterized by groups admitting a word acceptor whose language satisfies the \emph{fellow traveller property}: if  $L$ is the language accepted by the word acceptor automaton, there exists a constant $C \in \mathbb{N}$ such that for all $u, v \in L$ with $(u, v)$ accepted by one of the multiplier automata, we have that
\[
\forall n \in \mathbb{N}, \quad d(u^{[n]}, v^{[n]}) \leq C.
\]
Equivalently, when reading, synchronously, words that end at distance at most 1 the distance between the corresponding prefixes remains uniformly bounded.

If $G$ admits both \emph{left multiplier automata} (automata recognizing left multiplication by each generator in $X$) and \emph{right multiplier automata} (automata recognizing right multiplication by each generator in $X$), then $G$ admits a \emph{biautomatic structure}, and in this case $G$ is said to be \emph{biautomatic}.

From a geometric point of view, biautomaticity can be also described in terms of the fellow travel property: if $L$ is the language accepted by the word acceptor automaton, i.e., $L\pi = G$, then there exists a constant $C\geq 0$ such that  for all $u, v \in L$ starting or ending at distance at most 1, we have
	\[
	\forall n \in \mathbb{N}, \quad d(u^{[n]}, v^{[n]}) \leq C.
	\]

\subsection{Conjugacy Languages}

In \cite{[CHHR16]}, Ciobanu et al.\ consider different languages related to conjugacy in finitely generated groups. Namely, we define:

\begin{align*}
	\ConjGeo_X(G)&=\{w\in X^*\mid \ell(w)=\min\{\ell(u): u\pi\in [w\pi]\}\}=\{w\in \tilde X^*\mid \ell(w)=|w\pi|_c\}\\
	\CycGeo_X(G)&=\{w\in X^*\mid \text{ $w$ is a cyclic geodesic}\}\\
	\SL_X(G)&=\{w_g\in X^*\mid g\in G\} \\
	\ConjMinLenSL_X(G)&=\{w_g\in X^*\mid |g|=|g|_c\}\\
	\ConjSL_X(G)&=\{z_c\in X^*\mid c \text{ is a conjugacy class}\}.
\end{align*}
Observe that all of the defined languages are contained in $\Geo(G)$.

Regarding these languages, several results were established in \cite{[CHHR16]}, characterizing them based on the underlying group structure. We present three of these results below, which were the starting point for some of the contributions of this paper.

\begin{theorem}\cite[Theorem 3.1]{[CHHR16]}
	Let $G\langle X\rangle$ be a word hyperbolic group. Then $\ConjGeo(G)$ and $\ConjMinLenSL(G)$ are regular.
\end{theorem}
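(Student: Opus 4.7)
The plan rests on two facts about hyperbolic groups: the set $\Geo_X(G)$ of all geodesic representatives is a regular language, and $G$ is shortlex biautomatic, so its shortlex normal forms $\SL_X(G)$ also form a regular language. The bridge between geometry and formal languages will be the classical short-conjugator lemma, which I would prove first: there exists a constant $K=K(\delta,X)$ such that whenever $|g|_X > |g|_c$, there is some $u\in G$ with $|u|_X\le K$ and $|u^{-1}gu|_X < |g|_X$. This is a thin-polygon argument on the geodesic quadrilateral formed by $g$, a shortest conjugate $g'$, and an arbitrary conjugator $v$ joining them; $\delta$-thinness forces a bounded-length initial segment of $v$ to already shorten $g$.

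With this lemma, I would write
$$\ConjGeo_X(G) = \Geo_X(G)\cap \bigcap_{u\in B_K} L_u,\qquad L_u := \{w\in\tilde X^* : |u^{-1}(w\pi)u|_X \ge \ell(w)\},$$
where $B_K$ denotes the finite ball of radius $K$ in $G$, so it suffices to prove each $L_u$ regular. For fixed $u$, biautomaticity yields a synchronous two-tape finite automaton accepting pairs $(w,w')\in \Geo_X(G)\times \Geo_X(G)$ with $w'\pi = u^{-1}(w\pi)u$: the appropriately translated endpoints stay at distance at most $2|u|\le 2K$, so the fellow-traveller property applies. After padding the shorter word to match the longer's length, the signed difference $\ell(w)-\ell(w')$ is bounded in absolute value by $2|u|\le 2K$ by the triangle inequality, so its sign can be stored in the finite control. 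Projecting onto the $w$-coordinate of those accepting pairs with $\ell(w')<\ell(w)$ gives a regular language, and $L_u$ is its complement inside $\Geo_X(G)$. A finite intersection then gives $\ConjGeo_X(G)$ regular, and
$$\ConjMinLenSL_X(G) = \SL_X(G)\cap \ConjGeo_X(G)$$
is regular as well.

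The main obstacle is the regularity of each $L_u$: in general, comparing word lengths between two tapes of a synchronous automaton is not a regular operation. What rescues the construction here is the a priori bound $|\ell(w)-\ell(w')|\le 2|u|\le 2K$, which comes from $w$ and $w'$ both being geodesics for elements at controlled distance. This bounded offset fits inside a finite state, allowing the length-comparison step to be carried out without breaking regularity. Aside from the geometric short-conjugator lemma, every remaining step is a careful but standard finite-state manipulation on top of the biautomatic multipliers and the regular language $\Geo_X(G)$.
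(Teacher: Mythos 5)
The formal-language half of your argument is sound and close to what is actually done: the pair language $\{(w,w')\in\Geo_X(G)\times\Geo_X(G) : w'=_G u^{-1}wu\}$ is regular by the standard fellow-traveller construction (this is exactly the language $L_1(z)$ that the present paper borrows from Gersten--Short), the length offset between the two tapes is bounded so it fits in the finite control, and projections and finite intersections preserve regularity. The gap is entirely in your geometric ``short-conjugator lemma.'' The statement you need --- whenever $|g|_X>|g|_c$ there is a conjugator $u$ of length at most $K(\delta,X)$ with $|u^{-1}gu|_X<|g|_X$ --- is not the classical lemma and is not delivered by your thin-quadrilateral sketch. The classical result (Bridson--Haefliger III.$\Gamma$.2.9, quoted in this paper as Lemma \ref{bridsonhaefliger}) says only that two \emph{fully reduced} words representing conjugate elements admit \emph{cyclic permutations} that are conjugate by a word of length at most $2\delta+1$. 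The conjugator realizing a cyclic permutation of $w$ is a prefix of $w$, and can be as long as $\ell(w)$; it cannot be absorbed into any finite ball $B_K$. Your thinness argument does show that if $|g|-|g|_c$ is large compared to $\delta$ then the basepoint is far from the quasi-axis of $g$ and a bounded step toward it strictly decreases displacement; but when $0<|g|-|g|_c\le O(\delta)$ --- precisely the borderline words that $\ConjGeo_X(G)$ must exclude --- there is no reason any bounded conjugator strictly shortens $g$, while a long prefix followed by a short element does. So the identity $\ConjGeo_X(G)=\Geo_X(G)\cap\bigcap_{u\in B_K}L_u$ is unjustified in the inclusion $\supseteq$.

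The actual proof repairs this by quantifying over cyclic permutations rather than over bounded conjugators: up to a finite set $S$ of short words, one shows
\[
\ConjGeo(G)= S \cup\Bigl[\CycGeo(G)\setminus \Cyc\Bigl(\bigcup_{|\alpha|\leq 2\delta+1} L(\alpha)\Bigr)\Bigr],
\qquad
L(\alpha)=\{v'\in \CycGeo(G)\mid \exists u'\in \CycGeo(G),\ \alpha^{-1}u'\alpha=_G v',\ \ell(v')>\ell(u')\},
\]
where each $L(\alpha)$ is regular by exactly the biautomaticity argument you describe, and the crucial extra ingredient is that $\Cyc(\cdot)$ of a regular language is again regular. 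If you replace your $\bigcap_{u\in B_K}L_u$ by ``cyclic geodesic, and no cyclic permutation lies in any $L(\alpha)$ with $|\alpha|\le 2\delta+1$,'' the rest of your finite-state machinery goes through and recovers the theorem; as written, the reduction to bounded conjugators is the missing (and I believe irreparable) step.
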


\begin{theorem}\cite[Theorem 3.3]{[CHHR16]}
	Let $G$ be a virtually abelian group. There exists a finite generating set $Z$ for $G$ such that $\ConjGeo(G)$ is piecewise testable. Furthermore, there is an ordering of $Z$ with respect to which $G$ is shortlex automatic and $\ConjMinLenSL_Z(G)$ is regular.
\end{theorem}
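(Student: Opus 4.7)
The plan is to exploit the structure of $G$ as virtually abelian. Let $N \trianglelefteq G$ with $N \cong \mathbb{Z}^k$ of finite index, let $Q = G/N$, and fix a transversal $T = \{t_1 = 1, t_2, \ldots, t_m\}$ of $N$ in $G$. Take $Z$ to consist of a free basis $\{a_1, \ldots, a_k\}$ of $N$ together with the non-identity elements of $T$, symmetrized so $Z = Z^{-1}$, and enlarged to include every $Q$-conjugate $t^{-1} a_i t$ of each basis element. With this choice, the length function on $G$ closely tracks the $\ell_1$ length in $\mathbb{Z}^k$, with only a bounded distortion contributed by the finite transversal.

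For piecewise testability of $\ConjGeo_Z(G)$, I would first dispatch the abelian case $G = \mathbb{Z}^k$: a word $w$ is geodesic iff for each $i$ it does not contain both an $a_i$ and an $a_i^{-1}$, and since conjugacy is trivial, $\ConjGeo_Z = \Geo_Z$. This language is the intersection over $i$ of the complements of the subword patterns $\tilde Z^* a_i \tilde Z^* a_i^{-1} \tilde Z^*$ and $\tilde Z^* a_i^{-1} \tilde Z^* a_i \tilde Z^*$, hence piecewise testable. For virtually abelian $G$, a word in $\ConjGeo_Z(G)$ must contain only a bounded number of transversal letters (a piecewise testable constraint on the letters of $T \cap Z$), and the conjugacy-minimality condition on the abelian part amounts to the $N$-exponent vector being $\ell_1$-minimal within its finite $Q$-orbit; restricted to geodesic words, this depends only on the sign pattern of the appearing $a_i^{\pm 1}$, which is a finite disjunction of piecewise testable conditions. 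A finite Boolean combination of these then describes $\ConjGeo_Z(G)$.

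For the second claim, fix an ordering on $Z$ placing the abelian generators first, say $a_1 < a_1^{-1} < \cdots < a_k < a_k^{-1}$, with transversal letters coming afterwards. With respect to the resulting shortlex order, by Neumann--Shapiro $G$ is shortlex automatic and $\SL_Z(G)$ is regular. Piecewise testable languages are regular, so $\ConjGeo_Z(G)$ is regular, and then $\ConjMinLenSL_Z(G) = \SL_Z(G) \cap \ConjGeo_Z(G)$ is regular; the equality holds because $\ell(w_g) = |g|$ for every shortlex normal form, so $w_g \in \ConjGeo_Z$ exactly when $|g| = |g|_c$.

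The hard part will be engineering $Z$ and its ordering so that conjugation by transversal elements interacts cleanly with the shortlex order: the finite group $Q$ may permute coordinates and flip signs of the $a_i$, and shortlex normal forms for an arbitrary $g = n \cdot t_i$ must neatly separate the ``linear'' abelian part from a bounded correction. Making Neumann--Shapiro automaticity and the piecewise-testable description of geodesics hold simultaneously for the same generating set is what forces the specific symmetrization of $Z$ described in the first paragraph.
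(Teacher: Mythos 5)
There is a genuine gap --- in fact two. Note first that this paper does not prove the statement at all: it is quoted from \cite{[CHHR16]}, and the generating set that makes it true is reproduced inside the proof of Theorem \ref{thm_va}. Comparing with that construction, your set $Z$ is under-engineered. The quoted construction takes $Y=T^{\pm1}$, adjoins to the abelian generators the set $X'$ of all \emph{elements of $N$ representable by words of length at most $4$ in the letters of $Y$}, and then closes under inversion and conjugation; the resulting five properties (in particular property 5: any $w\in Y^*$ with $l(w)\leq 3$ equal in $G$ to $xy$, $x\in N$, $y\in Y\cup\{\varepsilon\}$, forces $x\in X$) are exactly what yield $\Geo_Z(G)\subseteq X^*\cup X^*YX^*\cup X^*YX^*YX^*$, i.e.\ at most two transversal letters in any geodesic. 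Your $Z$ (free basis, transversal letters, and $Q$-conjugates of the basis) omits $X'$ entirely, and you give no argument for your claim that words of $\ConjGeo_Z(G)$ contain boundedly many transversal letters; that bound is precisely what the absorbed short $Y$-products are engineered to deliver, and without them neither it nor the piecewise testability of $\Geo_Z(G)$ (the content of \cite{[HHR07]}) is established. You yourself flag this engineering as ``the hard part'' --- but it is the part that constitutes the proof, and it is not done.

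The second gap is in the conjugacy analysis and is fatal as stated. You reduce conjugacy-minimality to ``the $N$-exponent vector being $\ell_1$-minimal within its finite $Q$-orbit''. This is only meaningful for elements of $N$; for $g=nt$ with $t\notin N$ acting by $\phi_t\neq\mathrm{id}$, conjugation by $m\in N$ gives, additively, $m^{-1}(nt)m=(n+(\phi_t-\mathrm{id})(m))\,t$, so $[g]$ meets the whole coset $n+\mathrm{im}(\phi_t-\mathrm{id})$: the minimization is over an \emph{infinite} coset of a sublattice, not a finite orbit, and no sign-pattern criterion on the letters can capture it without further work. Moreover, even on $N$ the ``sign pattern'' claim fails: a finite-order matrix in $\mathrm{GL}_k(\mathbb{Z})$ need not preserve the $\ell_1$ norm, so whether a vector is minimal in its $Q$-orbit depends on magnitudes, not signs. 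What actually neutralizes the $N$-part in the quoted construction is that $X$ is closed under conjugation, which makes $|n|_Z$ constant on $G$-conjugacy orbits of elements of $N$ --- an ingredient you built into $Z$ but never use. The second half of your argument (Neumann--Shapiro shortlex automaticity, piecewise testable $\Rightarrow$ regular, and $\ConjMinLenSL_Z(G)=\SL_Z(G)\cap\ConjGeo_Z(G)$, the same identity this paper uses in Theorems \ref{ConjGeo_livres} and \ref{thm_va}) is correct once the first half is repaired, but as written the proposal proves neither the piecewise testability nor the correct description of $\ConjGeo_Z(G)$.
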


\begin{theorem}\cite[Theorem 4.2]{[CHHR16]}
	Let $G$ be a virtually cyclic group. Then for all generating sets of $G$ the set of shortlex conjugacy normal forms $\ConjSL$ is regular.
\end{theorem}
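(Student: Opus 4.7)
The plan is to reduce the problem to a computation inside the abelian finite-index normal subgroup $N$, using the structural results on $\mc C^{\bullet}$ (Theorems \ref{prop_restricao} and \ref{prop_fit_ger}) together with closure of $\mc C^{\bullet}$ under multiplication by rational sets (Proposition \ref{prod_rac}). Fix a right transversal $\{b_1,\dots,b_k\}$ of $N$ in $G$. By Theorem \ref{prop_fit_ger}, any $U \in \mc C^{\bullet}(G)$ decomposes as $U = \bigcup_{i=1}^k L_i b_i$ with $L_i \in \mc C^{\bullet}(N)$, and since $\alpha$ distributes over unions it suffices to treat each piece $L_i b_i$ separately.

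I would then split conjugators according to their coset of $N$: every $g \in G$ factors uniquely as $g = n b_j$ with $n \in N$, yielding
$$\alpha(L_i b_i) \;=\; \bigcup_{j=1}^k b_j^{-1}\left(\bigcup_{n \in N} n^{-1} L_i b_i\, n\right) b_j.$$
Using normality of $N$, write $b_i n = \psi_i(n) b_i$ with $\psi_i(n) = b_i n b_i^{-1} \in N$; then abelianness of $N$ lets me commute the $N$-valued factors, giving for each $\ell \in L_i$
$$n^{-1}\ell\, b_i\, n \;=\; n^{-1}\ell\,\psi_i(n)\,b_i \;=\; \ell\,\psi_i(n)n^{-1}\,b_i.$$
Thus the inner union collapses to $L_i \cdot C_i \cdot b_i$, where $C_i := \{\psi_i(n)n^{-1} : n \in N\} \subseteq N$.

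The crucial observation is that $C_i$ is a subgroup of $N$: because $N$ is abelian, the map $n \mapsto \psi_i(n)n^{-1}$ is a homomorphism $N \to N$, and $C_i$ is its image. Since $N$ is finitely generated abelian, $C_i$ is finitely generated and therefore rational in $G$ by Theorem \ref{AnisimovSeifert}. Theorem \ref{prop_restricao} lets me view $L_i$ as living in $\mc C^{\bullet}(G)$; then two applications of Proposition \ref{prod_rac} (multiplying by the rational sets $C_i$ and $\{b_i\}$) give $L_i C_i b_i \in \mc C^{\bullet}(G)$. Finally, $\alpha(L_i b_i) = \bigcup_{j=1}^k b_j^{-1}(L_i C_i b_i) b_j$ is a finite union of sets obtained by multiplication with rational singletons, so it stays in $\mc C^{\bullet}(G)$, and a further finite union over $i$ delivers $\alpha(U) \in \mc C^{\bullet}(G)$.

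The main obstacle is the algebraic step identifying $C_i$ as a subgroup of $N$: this depends crucially on both the normality of $N$ (to keep $\psi_i$ landing inside $N$) and its abelianness (without which $n \mapsto \psi_i(n)n^{-1}$ fails to be a homomorphism and $C_i$ could be an unstructured subset of $N$ of which we have no language-theoretic control). Once this subgroup is isolated and seen to be finitely generated, the remaining steps are routine bookkeeping with the semi-AFL closure properties of $\mc C^{\bullet}$.
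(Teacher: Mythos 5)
You have proved the wrong statement. The theorem under review asserts that for a virtually cyclic group $G$ and \emph{any} generating set, the language $\ConjSL$ of shortlex conjugacy normal forms $\{z_c \mid c \text{ a conjugacy class}\}$ is regular. Your argument instead establishes that $\alpha(U)\in \mc C^{\bullet}(G)$ for a virtually abelian group $G$ and a full semi-AFL $\mc C$ --- that is, you have reproduced (correctly, and in essentially the same way, with your homomorphism $n\mapsto \psi_i(n)n^{-1}$ playing the role of the matrix $u\mapsto u(Q_{b_i}^{-1}-I)$ in the paper) the proof of Theorem \ref{virtab_final}. Nowhere do you mention the shortlex order, select a distinguished representative from each conjugacy class, or prove regularity of any language of words; none of the objects in the statement appear in your proof.

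Moreover, the gap is not bridgeable by a routine corollary: even taking $\mc C$ to be the regular languages, knowing that $\alpha(U)$ is a rational subset does not give regularity of $\Geo(\alpha(U))$, let alone of a shortlex selection from it --- the paper itself points out at the end of Section \ref{virt_abelian} that $K\in\Rat(G)$ does not imply $\Geo(K)$ regular in virtually abelian groups. The actual proof of the statement (in \cite{[CHHR16]}, and generalized in Theorem \ref{thm_virt_cyc}) exploits structure specific to virtually cyclic groups: one takes $H\trianglelefteq G$ of finite index with $H\cong\Z$ and lets $C$ be the centralizer of $H$; then $G\setminus C$ meets only finitely many conjugacy classes, so that part of $\ConjSL$ is finite, while on $C$ conjugacy is controlled by conjugators ranging over a fixed finite transversal $T$. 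This makes the set
\[
V=\bigcup_{t\in T}\{u\in \Geo(G)\mid \exists v\in\Geo(G) \text{ with } v=_G t^{-1}ut,\ v\le_{sl} u\}
\]
regular, via regularity of the two-tape languages $L_1(t)$ \cite[Lemma 8.1]{[GS91b]}, and one concludes from $\ConjSL\cap C\pi^{-1}=[\Geo(G)\setminus V]\cap C\pi^{-1}$. The mechanism of excluding geodesics that are shortlex-beaten by a conjugate under a \emph{bounded} set of conjugators is the essential idea, and it is entirely absent from your proposal.
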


We denote the class of context-free languages by $\CF$. In \cite{[CS24]}, Carvalho and Silva prove the following theorem regarding the set $\alpha(K,L)$ for rational subsets $K,L$ of a free group $F_X$. 
\begin{theorem}\cite[Theorem 3.5]{[CS24]}
	Let $K,L\in \Rat(F_X)$. Then $\alpha(K,L)\in \CF^\forall(F_X)$.
\end{theorem}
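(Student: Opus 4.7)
The plan is to combine Benois's Theorem with a palindrome-style pushdown construction, and then upgrade the resulting context-free witness to the full $\pi$-preimage.

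By Benois (Theorem~\ref{benois}), since $K,L\in\Rat(F_X)$ the sets $\overline K,\overline L\subseteq\tilde X^{*}$ of reduced representatives are regular; so is $\overline L^{-1}:=\{u^{-1}\mid u\in\overline L\}$, because regularity is preserved by the literal-inverse operation. Consider the template language
\[
M=\{\,u^{-1}ku\mid u\in\overline L,\ k\in\overline K\,\}\subseteq\tilde X^{*}.
\]
I would first prove $M\in\CF$ via a pushdown automaton that nondeterministically splits its input as $v_1v_2v_3$, reads $v_1$ while pushing each letter onto the stack and simulating an automaton for $\overline L^{-1}$, reads $v_2$ simulating an automaton for $\overline K$, and reads $v_3$ by popping the stack letter-by-letter and checking that each input letter is the literal inverse of the popped symbol. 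The acceptance condition forces $v_3=v_1^{-1}$, so every accepted word has the form $u^{-1}ku$ with $u\in\overline L$ and $k\in\overline K$; hence $M\pi=\alpha(K,L)$ and $\alpha(K,L)\in\CF^{\exists}(F_X)$.

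To strengthen this to $\alpha(K,L)\in\CF^{\forall}(F_X)$, i.e.\ $\alpha(K,L)\pi^{-1}=M\pi\pi^{-1}\in\CF$, I would proceed in two steps. First, show that the set $\overline{M\pi}\subseteq\tilde X^{*}$ of reduced representatives of elements of $\alpha(K,L)$ is context-free, via a Benois-style saturation of the PDA for $M$ under the cancellation rewriting $xx^{-1}\to\varepsilon$. Second, recover $M\pi\pi^{-1}$ from $\overline{M\pi}$ by reversing free reduction: every $w$ with $w\pi\in M\pi$ admits a factorisation $w=w_0r_1w_1\cdots r_nw_n$ with $r_1\cdots r_n\in\overline{M\pi}$ and each $w_i\in D:=\pi^{-1}(1)$ (the context-free word problem of $F_X$), so that $M\pi\pi^{-1}=D\cdot\sigma(\overline{M\pi})$, where $\sigma$ is the substitution sending each $x\in\tilde X$ to $xD$; both the context-free substitution $\sigma$ and the concatenation with $D$ preserve $\CF$. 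The main obstacle is the first step, since the naive saturation introduces $\varepsilon$-transitions whose stack effects may not fit the standard PDA model; I would handle this by putting the PDA into a normal form in which cancellable patterns can be detected from the top-of-stack symbol and local state, so that only finitely many controlled new transitions suffice to absorb all $xx^{-1}$-rewritings.
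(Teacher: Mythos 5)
This theorem is quoted from \cite{[CS24]} and the paper does not reprove it; the closest it comes is the proof of Theorem \ref{ConjGeo_livres}, which recalls the strategy of \cite[Theorem 3.5]{[CS24]}: using the minimal automata for $\overline{K}$ and $\overline{L}$, the set $\alpha(K,L)$ is decomposed as $\bigcup_{a\in\tilde X}(Y_a\cup Z_a)$, a finite union of sets $\alpha(L'_{p',q'}\cap\tilde X^*a,\ L_{qT}\setminus a^{-1}\tilde X^*)$ and $\alpha(L'_{p',q'}\cap a\tilde X^*,\ L_{qT}\setminus a\tilde X^*)$ in which conjugator and conjugated element concatenate without cancellation, so the reduced representatives of each piece are literal rotations $u\ell$ or $\ell^{-1}u$ and context-freeness can be read off directly. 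Your route is genuinely different. The palindrome PDA for $M=\{u^{-1}ku\mid u\in\overline{L},\ k\in\overline{K}\}$ is correct and gives $\alpha(K,L)\in\CF^{\exists}(F_X)$ cleanly, and your step for passing from the reduced-word language $\overline{\alpha(K,L)}$ to the full preimage is also sound: the factorisation $w=w_0r_1w_1\cdots r_nw_n$ with $w_i\in D=\pi^{-1}(1)$ exists (remove one cancelling pair $xx^{-1}$ at a time; the deleted pair always reinserts inside some $w_i$, since consecutive letters $r_j$ of the reduced form are separated by the $w_j$), and $\CF$ is closed under $\CF$-substitution and concatenation. In effect you are proving the general principle $\CF^{\exists}(F_X)=\CF^{\forall}(F_X)$, which is more than the paper's cancellation-free decomposition does; what the decomposition buys in return is precise control of where geodesity fails, which is what Section 4 exploits to upgrade context-free to \emph{regular} for $\ConjGeo(U,V)$ --- information your construction discards.

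The one genuine gap is your first step in the upgrade: showing that $\overline{M\pi}$, the set of freely reduced forms of the context-free language $M$, is context-free. A ``Benois-style saturation'' of the PDA does not work as described. Benois's saturation terminates for finite automata because there are only finitely many pairs of states to connect by new $\varepsilon$-transitions; for a pushdown automaton, a cancelling pair created only after earlier cancellations (as in $abb^{-1}a^{-1}$) links two configurations whose stack contents may differ unboundedly, and no normal form makes this detectable from the top-of-stack symbol via finitely many new transitions. The statement you need is nonetheless a true, citable theorem: the set of irreducible descendants of a context-free language under the special confluent rewriting system $\{xx^{-1}\to\varepsilon\}$ is context-free (Benois--Sakarovitch; Book--Jantzen--Wrathall for monadic Thue systems), but its proof is grammar-based rather than a PDA saturation. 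Replace your sketch by a citation to that result and your argument closes.
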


\section{Conjugacy  relative to subsets}
In \cite{[LS11]}, Ladra and Silva show that virtually free groups have decidable generalized conjugacy problem with respect to rational subsets, meaning the problem of taking as input an element $g$ and a rational subset $U$ of a finitely generated virtually free group $G$, and deciding whether $g$ has a conjugate in $U$ or not. This problem has also been studied for different classes of groups in \cite{[Car23b]}.  In \cite{[CS24]}, the authors study the doubly generalized conjugacy problem with respect to rational subsets via language-theoretic methods.  When studying these problems, the main objects are not conjugacy classes themselves, but rather unions of the conjugacy classes of elements in a rational subset. 

Beyond decidability, there are important notions related to conjugacy in a group. Two examples of such notions are conjugacy growth and conjugacy languages. In this section, we will adapt these notions to the corresponding equivalent when, instead of conjugacy, we consider generalized conjugacy.

\subsection{Relative conjugacy growth}

In free groups, the study of the growth, as a function of $n$, of the number of conjugacy classes represented by words of length up to $n$ is equivalent to studying the number of cyclically reduced words of length less or equal to $n$, as each conjugacy class contains exactly one cyclically reduced word, up to cyclic permutation.

The \emph{strict conjugacy growth function} is defined as 
$$c_{G,X}(n)=\#\{[g]\mid |g|_c=n\},$$
while the  \emph{cumulative conjugacy growth function} is defined as 
$$cc_{G,X}(n)=\#\{[g]\mid |g|_c\leq n\}.$$

Several studies have explored the relation between the  growth function and the cumulative conjugacy growth function, providing examples of groups where these functions exhibit similar behavior, as well as cases where their growth differs substantially. For further details, see \cite{[GS10]}.

Since the number of cyclically reduced words  of length $n$ grows exponentially with $n$ in a free group, then $c_{F_r,X}(n)$ (and  $cc_{F_r,X}(n)$) is (asymptotically) exponential. For a more detailed analysis see \cite{[Coo05],[CK02],[AC17]}.

We define the \textit{conjugacy growth function relative to a subset} $U$ as
$$c_{G,X, U}(n)=\#\{[g]\mid |g|_c=n \wedge [g]\cap U\neq \emptyset\},$$ and its\textit{ cumulative counterpart} as
$$cc_{G,X, U}(n)=\#\{[g]\mid |g|_c\leq n \wedge [g]\cap U\neq \emptyset\}.$$ 

The main goal of this paper is to study \emph{relative conjugacy languages}, that is languages of representatives of elements in conjugacy classes containing elements in a rational subset $U$, generalizing previous known results for the case $U=G$. This is the analogous notion of conjugacy languages when, instead of the conjugacy problem, we are working in the generalized conjugacy problem.
To illustrate the impact of the choice of the subset in the language of representatives, we start by showing that, in a finitely generated free group, the \emph{(cumulative) conjugacy growth relative to a rational subset $U$} in the free group can be a polynomial of any degree, while the (cumulative) conjugacy growth (which corresponds to the case $U=G$) is exponential. We also prove that these are the only two possible options for the asymptotic behavior of the conjugacy growth. We thank Corentin Bodart for his comments that significantly improved the previous version of the following theorem, as well as its proof.

\begin{theorem}\label{thm_crescimento}
Let $d\in \N$ and $X=\{a,b\}$ be a basis of $F_2$. There exists a rational subset $U_d\in \Rat(F_2)$ such that the growth of $c_{F_2,X,U_d}(n)$ is  polynomial of degree $d-1$ with the associated series being rational. Additonally, there is no rational subset of $F_2$ with intermediate relative conjugacy growth.
\end{theorem}
\begin{proof}		
	Let  $L_d=\{w\in X^*\mid w \text{ contains exactly $d$ copies of $b$}\}.$ The language $L_d$ is regular as it can be recognized by the following finite-state automaton:
	
	\begin{center}
	\begin{tikzpicture}[>=stealth, on grid, auto]
		\node[state, initial] (q0) {};
		\node[state, right=2.2cm of q0] (q1) {};
		\node[state, right=2.2cm of q1] (q2) {};
		\node[right=1.8cm of q2] (dots) {$\cdots$};
		\node[state, right=1.8cm of dots] (qn) {};
		
		\path[->]
		(q0) edge[loop above] node {a} ()
		edge[bend left=10] node {b} (q1)
		(q1) edge[loop above] node {a} ()
		edge[bend left=10] node {b} (q2)
		(q2) edge[loop above] node {a} ();
		
		\draw[->] (q2) -- (dots);
		\draw[->] (dots) -- (qn);
		
		\path[->] (qn) edge[loop above] node {a} ();
		\node[right=1cm of qn] (out) {};
		\draw[->] (qn) -- (out);
		
	\end{tikzpicture}
	\end{center}
	
	We have that $|L_d\cap \widetilde X^n|= \binom{n}{d}=\Theta(n^d)$. Denote
	\begin{align*}
		U_d&=L_d\pi, \\
		U_d(n)&= (L_d\cap \widetilde X^n)\pi.
	\end{align*}
	Notice that all words in $L_d$ are positive, and so cyclicaly reduced. Hence, for all $g,h\in U_d$
	\begin{align*}
		|g|_c&=|g|,\\
		g&\sim h \text{ if and only if $g$ and $h$ are cyclic permutations}.
	\end{align*}
	
	Let $C_n$ be the cyclic group of order $n$ generated by $t$. We can consider the action of $C_n$ on $U_d(n)$ defined as follows: for each $t^k \in C_n$, with $0\leq k\leq n-1$, and $w \in U_d(n)$, the element $w \cdot t^k$ is obtained by cyclically permuting the last $k$ elements of $w$.
	Therefore, by the Burnside's lemma we have

	\begin{align*}
		c_{F_2,X,U_d}(n)&= |U_d(n)/C_n| =\frac{1}{|C_n|}|\{(g,t^k)\in U_d(n)\times C_n\mid g\cdot t^k=g\}|\\
		&= \frac{1}{n}|\{(g,t^k)\in U_d(n)\times C_n\mid g\cdot t^k=g\}|\\
		&= \frac{1}{n}\sum_{s\mid gcd(d,n)}\binom{n/s}{d/s}\cdot s, \\
	\end{align*}
	where the last equality follows from the fact that if \(g \cdot t^k = g\), then \(g\) must be invariant under a rotation by \(k\) positions. This implies that \(g\) can be divided into \(s=\frac{n}{k}\) identical blocks (so \(s\) must divide \(n\)), and each block must contain the same number of \(b\)'s (so \(s\) must also divide \(d\)). Therefore, the words that are fixed by non-trivial cyclic permutations are exactly the following: for each common divisor $s$ of $n$ and $d$, we count the number of words that can be split into $s$ identical blocks. In this case, each block has $n/s$ letters, containing exactly $d/s$ occurrences of $b$. Thus, there are $\binom{n/s}{d/s}$ ways to arrange the $b$'s within each block. Finally, any word that can be decomposed into $s$ identical blocks is invariant under exactly $s$ distinct cyclic permutations, corresponding to the \(s\) possible cyclic shifts of these blocks.

	Now, the main contribution comes from $s=1$, which gives the term $\frac{1}{n}\binom{n}{d}$. For the other terms with $s \ge 2$, we have 
	$\binom{n/s}{d/s} = \frac{1}{(d/s)!} \left( \left(\frac{n}{s}\right)^{d/s} + \text{lower-order terms} \right)$,
	 so dividing by $n$ gives a contribution of order $O(n^{d/2 -1})$,
	and so we get
	$$
	\frac{1}{n}\sum_{s\mid gcd(d,n)}\binom{n/s}{d/s}\cdot s = \frac{1}{n}\binom{n}{d} + O(n^{\frac{d}{2}-1}).
	$$
	Finally, expanding the binomial $\binom{n}{d}$ as a polynomial in $n$, we have
	\[
	\binom{n}{d} = \frac{n^d}{d!} - \frac{d(d-1)}{2d!} n^{d-1} + \dots,
	\]
	so dividing by $n$ gives $\frac{1}{d!} n^{d-1} + O(n^{d-2})$, which absorbs the previous lower-order contributions, which implies
	\[
	\frac{1}{n}\binom{n}{d} + O(n^{\frac{d}{2}-1}) = \frac{1}{d!} n^{d-1} + O(n^{d-2}).
	\]

	Hence, $c_{F_,X,U_d}(n)=\frac{1}{d!}n^{d-1}+O(n^{d-2})$ oscillates between finitely many different polynomials of degree $d-1$ (depending on $gcd(d,n)$, which influences the terms obtained from the previous sum) and so the associated series is rational, since it is defined through quasipolynomials (see, for example,  \cite{[L03]}).
	
	Regarding the final statement of the theorem, let $U$ be a rational subset of $F_2$ and $L$ a regular language such that $L\pi=U$. By Benois's Theorem (Theorem \ref{benois}) we can assume that $L$ is reduced.
	
	On one hand, we have that $\Cyc(L)$ and consequentely $\overline{\Cyc(L)}$ are regular languages.

	On the other hand, by defining $C_R$ as the language of all cyclically reduced words, we have that $C_R$ is regular. In fact, if Red is the (regular) language of reduced words in $\tilde X^*$, then
	$$
	C_R=\text{Red}\setminus  \left (\bigcup_{x\in \tilde X} x^{-1}\text{Red}\, x\right).
	$$
	
	Hence, we can consider the regular language
	$$
	L':= \overline{\Cyc(L)}\cap C_R,
	$$
	and observe that $L'$ consists precisely of the cyclically reduced words representing elements that are conjugate to elements of $L\pi$.
	 Since, up to cyclic permutation, each conjugacy class contains exactly one cyclically reduced word and it has minimal length in that conjugacy class, counting the number of conjugacy classes relative to $U$ with minimal length $n$ corresponds to counting the number of words in $L'$ of length $n$ divided by $n$, ensuring that we do not count separately words that are cyclic permutations of each other, i.e., if $g_{F_2,L'\pi}$ represents the growth in $F_2$ relative to $L'\pi$, then
	$$
	c_{F_2,X,U}(n)=\frac{1}{n}g_{F_2,L'\pi}(n).
	$$
	
	Finally, since regular language only have polynomial or exponential growth \cite{[tro81]}, and the growth of $L'\pi$ corresponds to the growth of $L'$, we get that the growth of $L'\pi$ is either polynomial or exponential.

	Hence $c_{F_2,X,U}$ has either polynimial or exponential growth.   
\end{proof}

\begin{remark}
A similar result, regarding the existence of rational subsets with polynomial conjugacy growth relative to a subset of any degree, could be obtained by considering the cumulative conjugacy growth function, where, by defining the same subsets, we would obtain the following (also by Burnside’s lemma):
		\begin{align*}
		cc_{F_,X,U_d}(n)&= \sum_{i=1}^{n} |U_d(i)/C_i| =\sum_{i=1}^{n}\frac{1}{|C_i|}|\{(g,t)\in U_d(i)\times C_i\mid g^t=g\}|\\
		&= \sum_{i=1}^{n}\frac{1}{i}|\{(g,t)\in U_d(i)\times C_i\mid g^t=g\}|\\
		&= \sum_{i=1}^{n}\Bigl[\frac{1}{i}\sum_{s\mid gcd(d,i)}\binom{i/s}{d/s}\cdot s\Bigr] \\
		&= \sum_{i=1}^{n}\Bigl[\frac{1}{i}\binom{i}{d} + O(i^{\frac{d}{2}-1})\Bigr]= \sum_{i=1}^{n} \Bigl[\frac{1}{d!}i^{d-1}+O(i^{d-2})\Bigr] \\
		&= \frac{1}{d!}n^{d-1}+O(n^{d-2}).
	\end{align*}
	which has also order $d-1$.
\end{remark}

\subsection{Relative conjugacy languages and decidability}

In this paper, we introduce relative conjugacy languages as the analogous concepts when considering conjugacy relative to a subset (with constraints on the conjugators). Fix some total order on $\tilde X$ and recall the notations introduced in Section \ref{quasigeodesics preliminaries}.
We define, for any subsets $U,V\subseteq G$, the following languages on $\tilde X^*$:
\begin{align*}
\Geo_X(U)&=\{w\in \Geo_X(G)\mid w\pi\in U\}\\
\ConjGeo_X(U,V)&=\ConjGeo_X(G)\cap \alpha(U,V)\pi^{-1}\\
\CycGeo_X(U)&=\{w\in \Geo_X(U)\mid \text{ $w$ is a cyclic geodesic}\}\\
 (\lambda,\varepsilon)-\QG_X(U)&=\{w\in U\pi^{-1}\mid \text{ $w$ is a $(\lambda,\varepsilon)$-quasigeodesic word}\} \\
\SL_X(G)&=\{w_g\in \Geo_X(G)\mid g\in G\} \\
\ConjMinLenSL_X(U,V)&=\{w_g\in \Geo_X(\alpha(U,V))\mid |g|=|g|_c\}\\
\ConjSL_X(U)&=\{z_c\in Geo_X(\alpha(U))\mid c \text{ is a conjugacy class}\}.
\end{align*}
When we are considering relative conjugacy without constraints, we  write $\ConjGeo_X(U)$ to denote $\ConjGeo_X(U,G)$. 
 Notice that we always have $\ConjGeo_X(K)\subseteq \CycGeo_X(\alpha(K))$. Moreover, when the generating set is clear from the context, we will omit explicit reference to it when mentioning each of the languages defined above.\\

We will now see how establishing that certain conjugacy languages belongs to a \emph{good} class of languages helps in deciding the generalized conjugacy problem with constraints, provided that the simple version of the conjugacy problem is decidable. We remark that, in general, establishing that $\ConjGeo(G)$ belongs to a class of languages with decidable membership does not suffice to show that the conjugacy problem is decidable. Indeed, Bodart \cite{[CB25]} provides an example of a group $G$ and a finite generating set $X$ such that $\ConjGeo_X(G)$ is context-free, the word problem is decidable, but the conjugacy problem is not. 

\begin{proposition} \label{prop_mot}
Let $\mc C$ be a class of subsets and $\mc L$ be a class of languages.
	Suppose that the following hold:
	\begin{enumerate}[label=\roman*.]
		\item If $U,V\in \mathcal{C}$, then $\ConjGeo(U,V)\in \mathcal{L}$ and it can be computed;
		\item If $U\in \mc C$ and $g\in G$, then $Ug\in \mc C$  and it can be computed;
		\item $G$ has decidable conjugacy problem;
		\item $\mathcal{L}$ has decidable membership problem.
		\end{enumerate}
		Then, $G$  has a decidable $\mathcal{C}$-generalized conjugacy problem with $\mathcal{C}$-constraints.
\end{proposition}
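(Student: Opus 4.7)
The plan is to reduce the question ``$g\in\alpha(U,V)$?'' to a single membership test inside a language of the form $\ConjGeo(U,Vz)$, where $z\in G$ is a conjugator produced from the hypotheses. The key algebraic observation is the following shift identity: if $z\in G$ and $h=z^{-1}gz$, then for any $u\in U$ and $v\in V$ one has $g=v^{-1}uv\iff h=(vz)^{-1}u(vz)$, and $vz$ ranges over $Vz$ as $v$ ranges over $V$. Consequently, $g\in\alpha(U,V)\iff h\in\alpha(U,Vz)$. This identity is what allows us to replace $g$ by a \emph{minimum-length} conjugate, so that any geodesic representative of it automatically belongs to $\ConjGeo(G)$; this is exactly the hypothesis under which membership in $\ConjGeo(U,Vz)$ faithfully encodes membership in $\alpha(U,Vz)$.

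With this in hand, the algorithm has four steps. First, using the decidable conjugacy problem from (iii), compute $|g|_c$ and produce a word $w_h\in\tilde X^*$ of length $|g|_c$ whose image $h=w_h\pi$ is a conjugate of $g$. Since the word problem is decidable (it reduces to the conjugacy problem via $g=1\iff g\sim 1$), this is achieved by enumerating $\tilde X^*$ by length up to $|g|$ and querying the conjugacy oracle on each candidate. A second enumeration, now over candidate conjugators, produces an element $z\in G$ with $z^{-1}gz=h$, which terminates because such a $z$ exists. Second, invoke (ii) to compute $Vz\in\mc C$. Third, invoke (i) to compute the language $\ConjGeo(U,Vz)\in\mc L$. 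Fourth, apply (iv) to decide whether $w_h\in\ConjGeo(U,Vz)$, and return that answer as the answer to the original problem.

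Correctness is immediate from the shift identity: since $\ell(w_h)=|h|=|g|_c=|h|_c$, the word $w_h$ lies in $\ConjGeo(G)$, so the test $w_h\in\ConjGeo(U,Vz)$ reduces to $h\in\alpha(U,Vz)$, which by the shift identity holds if and only if $g\in\alpha(U,V)$. The only delicate point is the witness-extraction in the first step, namely passing from decidability of the conjugacy \emph{predicate} to the actual computation of a conjugator $z$; this is handled by the standard trick of unbounded enumeration of $\tilde X^*$ combined with the word problem, and relies on the observation that the conjugacy problem suffices to decide the word problem. The remaining steps are a direct rephrasing of the hypotheses: condition (ii) is precisely what is needed to absorb the right-translation by $z$, and condition (iv) is what turns the decision problem into a language-membership query.
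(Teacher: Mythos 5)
Your proposal is correct and follows essentially the same route as the paper: replace $g$ by a minimal-length conjugate, extract a conjugator $z$ by enumeration using the conjugacy/word problem, translate the constraint set by $z$ via hypothesis (ii), and reduce everything to a single membership query in $\ConjGeo(U,V\cdot z^{\pm1})$ applied to a geodesic representative of the minimal conjugate. The only difference is the direction of the conjugation convention (you write $h=z^{-1}gz$ and use $Vz$ where the paper writes $w=z^{-1}\tilde wz$ and uses $Vz^{-1}$), which is immaterial.
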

\begin{proof}
		Given $w\in G$ and $U,V \in \mathcal{C}$, we want to decide if there exists $u\in U$ and $v\in V$ such that $w=v^{-1}uv$. 
		We begin by enumerating all words of length smaller than $|w|$ and checking if they represent an element conjugate to $w$ using the decidability of the conjugacy problem, until we find a minimal element $\tilde w$ in the conjugacy class of $w$ (we might have $\tilde w =w$). We then compute a conjugator $z\in G$ such that   $w=z^{-1}\tilde{w}z$. This can be done by enumerating all possible conjugators and testing the equality using the decidability of the word problem (which follows from that of the conjugacy problem).  We know that $Vz^{-1}\in \mc C$ by ii. and we can compute $\ConjGeo(U,Vz^{-1})$, which must be a language in $\mc L$, by i.

		Now, we simply have to decide if $\tilde w \in  \ConjGeo(U,Vz^{-1})$. Indeed, if $\tilde w \in  \ConjGeo(U,Vz^{-1})$, then there exist $u\in U$ and $v\in V$ such that $\tilde{w}=(vz^{-1})^{-1}uvz^{-1}$, which implies that $w=z^{-1}\tilde{w}z=v^{-1}uv$, as intended. Conversely, if there exist $u\in U$ and $v\in V$ such that $w=v^{-1}uv$, then $z^{-1}\tilde{w}z=v^{-1}uv$, and so $\tilde{w}=zv^{-1}uvz^{-1}$. Now, $\tilde{w}\in \alpha(U,Vz^{-1})\pi^{-1}$ and $\tilde{w}\in \ConjGeo(G)$ by construction, which implies that $\tilde{w}\in \ConjGeo(U,Vz^{-1})$.
\end{proof}
	
We remark that condition ii. is not very restrictive. For instance, it follows from \cite[Lemma 4.1]{[Her91]} (see also \cite[Lemma 2.3]{[CNB25]}) that it holds if $\mc C$ is the class of rational or recognizable subsets, or any other defined in a similar way replacing the class of regular languages by any other cone, i.e, a class of languages closed under morphism, inverse morphism and intersection with regular languages.

Dealing with the problem without constraints, that is, putting $V=G$, we immediately obtain the following corollary:
\begin{corollary}\label{corolario decidibilidade}
Let $\mc C$ be a class of subsets and $\mc L$ be a class of languages.
	Suppose that the following hold:
	\begin{enumerate}[label=\roman*.]
		\item If $U\in \mathcal{C}$, then $\ConjGeo(U)\in \mathcal{L}$;
		\item $G$ has a decidable conjugacy problem;
		\item $\mathcal{K}$ has a decidable membership problem.
	\end{enumerate}
	Then, $G$ has a decidable $\mathcal{C}$-generalized conjugacy problem.
\end{corollary}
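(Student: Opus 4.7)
The plan is to specialize Proposition \ref{prop_mot} to the setting $V = G$. Taking $V = G$ imposes no constraint on the conjugator, so the $\mc C$-generalized conjugacy problem with $\mc C$-constraints in which $V = G$ coincides with the plain $\mc C$-generalized conjugacy problem. Moreover $\alpha(U, G) = \alpha(U)$, and hence $\ConjGeo(U, G) = \ConjGeo(U)$, so condition i.\ of the corollary delivers exactly the language we need. This is also why condition ii.\ of Proposition \ref{prop_mot} (closure of $\mc C$ under right translation, used there to modify $V$ into $Vz^{-1}$) does not appear in the corollary: we never modify $V$, since $Gz^{-1} = G$.

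Concretely, given an input $w \in G$ and $U \in \mc C$, I would first use the decidability of the conjugacy problem from condition ii.\ (together with its corollary, decidability of the word problem) to enumerate words of length at most $|w|$ and locate a minimum-length representative $\tilde w \in \tilde X^*$ of the conjugacy class $[w]$; by construction $\tilde w \in \ConjGeo(G)$. Next I would invoke condition i.\ to obtain the language $\ConjGeo(U) \in \mc L$, and apply condition iii.\ to decide membership of $\tilde w$ in $\ConjGeo(U)$. For correctness, $\tilde w \in \ConjGeo(U) = \ConjGeo(G) \cap \alpha(U)\pi^{-1}$ if and only if $\tilde w\pi \in \alpha(U) = \bigcup_{g \in G} g^{-1}Ug$, that is, if and only if $\tilde w$ has a conjugate in $U$; since $\tilde w \sim w$, this is in turn equivalent to $w$ having a conjugate in $U$, which is exactly the instance of the $\mc C$-generalized conjugacy problem we wished to decide.

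There is no real obstacle beyond reading off the $V = G$ case of the proof of Proposition \ref{prop_mot}. As in that proposition, I would implicitly assume that condition i.\ supplies $\ConjGeo(U)$ effectively from $U$, rather than merely existentially as a language in $\mc L$, since otherwise the membership procedure of condition iii.\ cannot be applied algorithmically. With this tacit computability assumption, the argument is a routine specialization.
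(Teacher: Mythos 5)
Your proposal is correct and follows exactly the paper's route: the paper derives this corollary by "putting $V=G$" in Proposition \ref{prop_mot}, which is precisely your specialization (including the observation that condition ii.\ of the proposition becomes vacuous since $Gz^{-1}=G$). Your remark about the tacit effectiveness assumption in condition i.\ is a fair reading of what the paper also leaves implicit.
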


\begin{remark}
In \cite{[CS24]}, the authors  study the \emph{doubly generalized conjugacy problem}, which consists of deciding, taking as input two subsets $U,V$, deciding whether there are elements $x\in U$ and $y\in V$ such that $x\sim y$. If, beyond decidable membership in the class of languages $\mc C$, we can decide if two languages in $\mc C$ intersect nontrivially, we can also solve the doubly generalized conjugacy problem, as it amounts to deciding if $\ConjGeo(U)\cap \ConjGeo(V)=\emptyset$ or not. Despite being a much stronger condition to impose on the class of languages, in most of the cases covered in this paper we always have regular $\ConjGeo$.
\end{remark}

\section{Free groups}
The main goal of this section is to prove that given two rational subsets $U,V$ of a free group $F_X$, then the languages $\ConjGeo(U,V)$ and $\ConjMinLenSL(U,V)$ are regular. 

Let $\mathcal{A}=(Q,q_0,T,E)$ be a finite automaton recognizing a language $L$ on $\tilde{X}^*$. For any $p,q\in Q$ we define the set $L_{p,q}$ of all words that can be obtained in $\mathcal{A}$ between the states $p$ and $q$.
Notice that these languages are regular (just change the initial and terminal states of $\mathcal{A}$). Further, given subsets of states $S_1, S_2\subseteq Q$, we define $L_{S_1,S_2}$ to be the set of all words that can be read in $\mathcal{A}$ between a state in $S_1$ and a state in $S_2$. To simplify the notation, when $S_i=\{p\}$ for some $i=1,2$ we just write $p$ in the index, instead of $\{p\}$.

\begin{remark}
	Recall that for any $U,V\subseteq F_X$, we define
	$$\ConjGeo(U,V):=\ConjGeo(F_X)\cap \alpha(U,V)\pi^{-1}.$$
	However, when thinking about free groups, since $\ConjGeo(F_X)\subseteq\Geo(F_X)$, we have that
	$$
	\ConjGeo(U,V)= \ConjGeo(F_X) \cap \overline{\alpha(U,V)},
	$$ 
	and so, to simplify the notation, in this section we will write 
	$$
	\ConjGeo(U,V)= \ConjGeo(F_X) \cap \alpha(U,V),
	$$
	i.e., we will be thinking about the subset of the free group $\alpha(U,V)$ as the language of reduced words representing elements in $\alpha(U,V)$, that is, a subset of the free monoid.
\end{remark}

\begin{definition}\label{def_P_K.L}
	Let $F_X$ be a free group and $K, L$ be regular languages on $\tilde X^*$. We define 
	$P_{K,L}$ as the set of all \emph{permutations of words in $K$ by words in $L$}, i.e.,
	$$
	P_{K,L}=\{u\ell\mid \ell\in L,\ \ell u\in K\}.
	$$
\end{definition}
In order to prove the main goal of this section, we start with two auxiliary results. In what follows it is important to have in mind that $\ConjGeo(F_X)$ is regular, for any free group $F_X$ (see \cite{[CHHR16]}).

\begin{proposition} \label{ConjGeo_livres1}
	Let $F_X$ be a free group and $U, V$ be rational subsets of $F_X$ such that $UV$ is reduced. Then $\ConjGeo(U,V)$ is a regular language.
\end{proposition}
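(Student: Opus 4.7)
The plan is to characterize $\ConjGeo(U,V)$ combinatorially as (essentially) the language $P_{\overline U,\overline V}$ restricted to cyclically reduced words, plus a trivial degenerate correction. By Benois's theorem (Theorem~\ref{benois}), $\overline U$ and $\overline V$ are regular languages, and the hypothesis that $UV$ is reduced says that $\overline U\cdot\overline V$ is a reduced regular language. I will also use as a black-box from \cite{[CHHR16]} that $\ConjGeo(F_X)$ is regular; equivalently, in a free group a reduced word lies in $\ConjGeo(F_X)$ iff it is cyclically reduced. For $u\in\overline U$, $v\in\overline V$ with $u\ne\varepsilon$, let $p$ denote the longest common prefix of $u$ and $v$, and write $u=pu'$, $v=pv'$. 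A direct free-group calculation, using that $uv$ is reduced and that $p$ is maximal, shows that $v^{-1}uv$ reduces in $F_X$ to the word $v'^{-1}u'pv'$, and that this reduced form is cyclically reduced if and only if $v'=\varepsilon$, i.e., $v$ is a (word-wise) prefix of $u$. The reduced hypothesis also prevents any further cascading that could yield new cyclically reduced representatives: if $u$ is not cyclically reduced, its last letter equals the inverse of its first, and $UV$ reduced then forbids any $v\in\overline V$ from beginning with that first letter of $u$.

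Consequently, for $u\ne\varepsilon$, every cyclically reduced reduced representative of an element of $\alpha(U,V)$ has the shape $u'v$ with $u=vu'\in\overline U$ and $v\in\overline V$, i.e., it lies in the language
\[
P_{\overline U,\overline V}=\{u'v:v\in\overline V,\ vu'\in\overline U\}.
\]
The degenerate case $u=\varepsilon$ contributes the empty word $\varepsilon\in\alpha(U,V)$ (whenever $1\in U$ and $V\ne\emptyset$), which is trivially cyclically reduced. Gathering everything,
\[
\ConjGeo(U,V)=\bigl(P_{\overline U,\overline V}\cap\ConjGeo(F_X)\bigr)\cup D,
\]
where $D=\{\varepsilon\}$ if $1\in U$ and $V\ne\emptyset$, and $D=\emptyset$ otherwise.

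Regularity then follows: the permutation operator $P_{K,L}$ preserves regularity whenever $K$ and $L$ are regular, since an automaton for $P_{\overline U,\overline V}$ can be obtained from automata for $\overline U$ and $\overline V$ by a standard product construction that nondeterministically guesses the split-point of the input between the factors $u'$ and $v$ and simultaneously verifies that the rotated word $vu'$ is accepted by the automaton for $\overline U$. Combined with the regularity of $\ConjGeo(F_X)$ and the trivial regularity of the finite set $D$, we conclude that $\ConjGeo(U,V)$ is regular. The main technical obstacle is the case analysis in the first paragraph: one must verify carefully that the reduced hypothesis really does rule out every cyclically reduced representative of an element of $\alpha(U,V)$ that is not captured by the prefix case $v=p$, via the delicate interplay between common prefixes, cyclic reducedness, and the non-cancellation guaranteed by $\overline U\overline V$ being a reduced language.
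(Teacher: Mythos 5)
Your proposal follows essentially the same route as the paper's proof: you introduce the same permutation language $P_{\overline{U},\overline{V}}$, prove its regularity by an automaton construction that is equivalent to the paper's decomposition $\bigcup_{p,t}L_{p,t}(L_{q_0,p}\cap\overline{V})$, and then claim $\ConjGeo(U,V)=\ConjGeo(F_X)\cap P_{\overline{U},\overline{V}}$ (you add the empty-word correction $D$, which is a legitimate edge case that the paper's own statement of the identity overlooks, e.g.\ $U=\{1\}$, $V=\{a\}$).

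However, there is a genuine gap in the central combinatorial claim, and it is the same gap that is present in the paper's proof. You assert that, writing $p$ for the longest common prefix of $u$ and $v$ with $u=pu'$, $v=pv'$, the word $v'^{-1}u'pv'$ is the reduced form of $v^{-1}uv$ and is cyclically reduced if and only if $v'=\varepsilon$. This fails exactly when $u$ is a proper prefix of $v$: then $u'=\varepsilon$, the word $v'^{-1}pv'=v'^{-1}uv'$ need not be reduced (maximality of $p$ controls the common prefix of $u'$ and $v'$, not that of $v'$ and $u$), and the cancellation can cascade. Concretely, take $U=\{ab\}$ and $V=\{abab\}$; then $\overline{U}\,\overline{V}=\{ababab\}$ is reduced, yet $v^{-1}uv=(abab)^{-1}(ab)(abab)=ab$ is cyclically reduced, so $ab\in\ConjGeo(U,V)$, while $P_{\overline{U},\overline{V}}=\emptyset$ and $D=\emptyset$. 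Thus the claimed identity is false as stated (the paper's corresponding step, ``$|k^\ell|=|k|$ if and only if $\ell$ is a prefix of $k$,'' fails on the same example). The missing case is that of conjugators $v$ of the form $u^{j}q$ with $q$ a prefix of $u$ (more generally, prefixes of powers of the cyclically reduced core of $u$), which produce cyclic permutations of that core without $v$ being a prefix of $u$. The conclusion of the proposition is very likely still true, but repairing the argument requires enlarging $P_{\overline{U},\overline{V}}$ to account for these conjugators, which calls for a more careful (though still finite-state) construction than the one you, and the paper, give.
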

\begin{proof}
	Notice that $\overline U$ and $\overline V$ are regular by Theorem \ref{benois}.
	 We start by showing that $P_{\overline{U},\overline{V}}$ (as in Definition \ref{def_P_K.L}) is regular.
	Let  $\mathcal{A}=(Q,q_0,T, E)$ be the minimal automaton recognizing $\overline{U}$.
	
	We claim that
	$$
	P_{\overline{U},\overline{V}} =\bigcup_{p\in Q,\ t\in T}L_{p,t}(L_{q_0,p}\cap \overline{V}).
	$$
		In fact, if $w\in P_{\overline{U},\overline{V}}$, then $w=u\ell$ with $\ell\in \overline{V}$ and $\ell u\in \overline{U}$. Since $\ell u\in \overline{U}$, there exist states $p\in Q$ and $t\in T$ such that $\ell\in L_{q_0,p}$ (and so $\ell\in L_{q_0,p}\cap \overline{V}$) and $u\in L_{p,t}$, which implies $w\in \bigcup_{p\in Q,\ t\in T}L_{p,t}(L_{q_0,p}\cap \overline{V})$.

		On the other hand, if $w\in \bigcup_{p\in Q,\ t\in T}L_{p,t}(L_{q_0,p}\cap \overline{V})$, then $w=u\ell$, for $u$ such that $u\in L_{p,t}$ and $\ell\in L_{q_0,p}\cap \overline{V}$, for some $p\in Q$ and $t\in T$. Then $\ell\in \overline{V}$ and $\ell u\in L_{q_0,p}L_{p,t}\subseteq \overline{U}$. Hence $ w\in P_{\overline{U},\overline{V}}$.
		
		Next, observe that $	P_{\overline{U},\overline{V}}$ is the finite union of regular languages, and so $	P_{\overline{U},\overline{V}} $ is regular.
		
		Now, since $UV$ is reduced, for all $k\in {U}$ and $\ell\in {V}$, $|k^\ell|\geq |k|$, and we have equality if and only if $k=\ell u$, for some $u\in \tilde{X}^*$, i.e., if $\ell$ is a prefix of $k$.

We will now show that 
		$$
			\ConjGeo(U,V)=\ConjGeo(F_X)\cap P_{\overline{U},\overline{V}},
		$$
		and that suffices as $\ConjGeo(F_X)$ and $P_{\overline{U},\overline{V}}$ are regular and regular languages are closed under intersection.
 
		If $w\in \ConjGeo(U,V)$, then by definition, $w\in \ConjGeo(F_X)\cap \alpha(U,V)$. As $w\in \alpha(U,V)$, we get that $w=_G\ell^{-1}k\ell$, for some $\ell\in \overline{V}$ and $k\in \overline{U}$. Since $w\in \ConjGeo(F_X)$, we conclude, from previous observations, that $\ell$ is a prefix of $k$, and so $w\in P_{\overline{U},\overline{V}}$.

		For the other inclusion, we just need to observe that $P_{\overline{U},\overline{V}}\subseteq \alpha(U,V)$. In fact, if $w\in P_{\overline{U},\overline{V}}$, then $w=u\ell$ such that $\ell\in \overline{V}$ and $\ell u\in \overline{U}$. Then $w=\ell^{-1}\cdot \ell u \cdot \ell$, which implies that $w\in\alpha(U,V)$.
\end{proof}

\begin{corollary}\label{ConjGeo_livres2}
	Let $F_X$ be a free group and $U, V$ be rational subsets on $F_X$ such that $V^{-1}U$ is reduced. Then $\ConjGeo(U,V)$ is a regular language.
\end{corollary}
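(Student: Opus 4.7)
The strategy is to reduce the corollary to Proposition \ref{ConjGeo_livres1} by exploiting the symmetry between the hypothesis ``$V^{-1}U$ is reduced'' here and ``$UV$ is reduced'' there, via the involution $w \mapsto w^{-1}$ on $\tilde X^*$.

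First, I would introduce $U^{-1} = \{u^{-1} : u \in U\}$, which is rational whenever $U$ is (rational subsets are closed under inversion). Applying the word-level identity $(\ell^{-1} u)^{-1} = u^{-1}\ell$, I would argue that $V^{-1}U$ is a reduced product precisely when $U^{-1}V$ is: a reduced word $\ell^{-1}u$ with $\ell \in \overline V$, $u \in \overline U$ corresponds bijectively to a reduced word $u^{-1}\ell$ with $u^{-1} \in \overline{U^{-1}}$, $\ell \in \overline V$. Thus the hypothesis of the corollary translates into the hypothesis of Proposition \ref{ConjGeo_livres1} applied to the pair $(U^{-1}, V)$.

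Second, I would verify the set-theoretic identity
$$\alpha(U,V)^{-1} \;=\; \alpha(U^{-1}, V),$$
which follows immediately from $(v^{-1}uv)^{-1} = v^{-1}u^{-1}v$. Word-inversion preserves both the length of a word and the property of being cyclically reduced, so $\ConjGeo(F_X)$ is itself closed under $w \mapsto w^{-1}$. Combining these observations gives
$$\ConjGeo(U,V) \;=\; \{w^{-1} : w \in \ConjGeo(U^{-1}, V)\}.$$

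Third, I would apply Proposition \ref{ConjGeo_livres1} to the pair $(U^{-1}, V)$—whose concatenation $U^{-1}V$ is reduced by the translation above—to conclude that $\ConjGeo(U^{-1}, V)$ is regular. Finally, since word-inversion on $\tilde X^*$ is the composition of reversal with the letter-inverting monoid morphism, it preserves regularity; hence $\ConjGeo(U, V)$, being the image of a regular language under this regularity-preserving bijection, is regular.

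The main things to check are the identity $\alpha(U,V)^{-1} = \alpha(U^{-1}, V)$ together with the correct translation of the ``reduced'' hypothesis; the remainder is a direct application of the previous proposition combined with standard closure properties of $\mathrm{Reg}$.
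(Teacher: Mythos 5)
Your argument is correct, but it takes a genuinely different route from the paper. The paper proves the corollary by re-running the proof of Proposition \ref{ConjGeo_livres1} in mirror image: it introduces the variant permutation set $P'_{\overline{U},\overline{V}}=\{\ell^{-1}u\mid \ell^{-1}\in \overline{V}^{-1},\ u\ell^{-1}\in \overline{U}\}$, shows it is regular via the same automaton decomposition $\bigcup_{p,t}(L_{p,t}\cap \overline{V}^{-1})L_{q_0,p}$, and then verifies $\ConjGeo(U,V)=\ConjGeo(F_X)\cap P'_{\overline{U},\overline{V}}$, which requires repeating the length/suffix analysis (here the equality $|k^\ell|=|k|$ holds exactly when $\ell^{-1}$ is a suffix of $k$). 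You instead deduce the corollary formally from Proposition \ref{ConjGeo_livres1} via the involution $w\mapsto w^{-1}$, and all the ingredients you use check out: $U^{-1}$ is rational, $\overline{V}^{-1}\overline{U}$ is a reduced language iff $\overline{U}^{-1}\overline{V}=\overline{U^{-1}}\,\overline{V}$ is (formal inversion of words preserves reducedness), $\alpha(U,V)^{-1}=\alpha(U^{-1},V)$, $\ConjGeo(F_X)$ (the cyclically reduced words) is inversion-closed, and inversion of languages preserves regularity. Your approach buys economy --- no need to redo the automaton argument or the prefix/suffix case analysis --- while the paper's approach yields the explicit regular expression $P'_{\overline{U},\overline{V}}$ for the language; since the later application in Theorem \ref{ConjGeo_livres} only uses the regularity conclusion, your shorter derivation would serve equally well there.
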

\begin{proof}
	It is well known that if $\overline{V}$ is regular, then $\overline{V}^{-1}$ is also a regular language. Hence, by defining 
	$$
	P'_{\overline{U},\overline{V}}=\{l^{-1}u\mid l^{-1}\in \overline{V}^{-1},\ ul^{-1}\in \overline{U}\},
	$$
	and proceeding as in the proof of Proposition \ref{ConjGeo_livres1}, we get that
	$$
	P'_{\overline{U},\overline{V}} =\bigcup_{p\in Q,\ t\in T}(L_{p,t}\cap \overline{V}^{-1})L_{q_0,p}.
	$$
	
	We can then conclude that $P'_{\overline{U},\overline{V}}$ is regular and, as in Proposition \ref{ConjGeo_livres1}, that 
	$$
	\ConjGeo(U,V)=\ConjGeo(F_X)\cap P'_{\overline{U},\overline{V}},
	$$
which implies that $\ConjGeo(U,V)$ is regular.
\end{proof}

\begin{theorem}\label{ConjGeo_livres}
	Let $F_X$ be a free group and $U, V $ be rational subsets of $F_X$. Then $\ConjGeo(U,V)$ and $\ConjMinLenSL(U,V)$ are regular languages.
\end{theorem}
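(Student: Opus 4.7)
Plan: The approach is to reduce the general case to the two special cases already handled in Proposition \ref{ConjGeo_livres1} and Corollary \ref{ConjGeo_livres2} through a finite decomposition of the rational subsets $U$ and $V$. I first observe that in a free group, $\ConjGeo(U,V) = \ConjMinLenSL(U,V)$: every element has a unique reduced representative, and since the cyclically reduced words are precisely those of minimal length in each conjugacy class, both languages coincide with the set of reduced representatives of cyclically reduced elements of $\alpha(U,V)$. Thus it suffices to prove regularity for $\ConjGeo(U,V)$.

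By Benois's theorem, $\overline{U}$ and $\overline{V}$ are regular. I partition $\overline{U}$ by the first letter $y$ and last letter $z$, and $\overline{V}$ by the first letter $x$, obtaining regular sublanguages $U_{y,z}$ and $V_x$. For each triple $(y,z,x)$: if $z \neq x^{-1}$, then $U_{y,z} V_x$ is reduced and Proposition \ref{ConjGeo_livres1} gives regularity of $\ConjGeo(U_{y,z},V_x)$; if instead $y \neq x$, then $V_x^{-1} U_{y,z}$ is reduced and Corollary \ref{ConjGeo_livres2} applies. The remaining problematic case is $y = x$ and $z = x^{-1}$: writing $u = xu'x^{-1}$ and $v = xv'$ yields $v^{-1}uv = v'^{-1}u'v'$, so $\ConjGeo(U_{x,x^{-1}},V_x) = \ConjGeo(U^{(x)}, V^{(x)})$ for the rational sets $U^{(x)} = \{u' : xu'x^{-1} \in U_{x,x^{-1}}\}$ and $V^{(x)} = \{v' : xv' \in V_x\}$. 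I then iterate this reduction on the problematic subcases.

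The main technical obstacle is showing that this recursion terminates: a priori, the unwrapping could persist indefinitely as new outer layers of $u$ and $v$ match. The key observation is that the unwrapping is governed by the finite automata recognizing $\overline{U}$ and $\overline{V}$: an automaton for $U^{(x)}$ is obtained by updating the start state to the one reached from $q_0$ via $x$ and the accept states to those transitioning to an original accept state on $x^{-1}$, while an automaton for $V^{(x)}$ updates only the start state. Since both automata have finitely many states, only finitely many distinct configurations $(U^{(w)}, V^{(w)})$ can arise from iterated letter sequences $w$, and at each reachable configuration the non-problematic triples contribute regular languages by Proposition \ref{ConjGeo_livres1} or Corollary \ref{ConjGeo_livres2}. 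Thus $\ConjGeo(U,V)$ is a finite union of regular languages and is regular; by the first paragraph, $\ConjMinLenSL(U,V)$ is then regular as well.
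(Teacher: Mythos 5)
Your proof is correct but takes a genuinely different route from the paper's. The paper obtains the general case by importing the decomposition of $\alpha(U,V)$ from \cite[Theorem 3.5]{[CS24]} into the sets $Y_a$ and $Z_a$, each of which is a finite union of $\alpha(K,L)$'s with $KL$ (resp.\ $L^{-1}K$) reduced, and then intersects everything with $\ConjGeo(F_X)$; the shortlex statement is handled by intersecting with $\SL(F_X)$. You instead partition $\overline{U}$ and $\overline{V}$ by boundary letters, apply Proposition \ref{ConjGeo_livres1} and Corollary \ref{ConjGeo_livres2} to the non-cancelling triples, and resolve the single bad case $y=x$, $z=x^{-1}$ by peeling off the matching letter and recursing; the finiteness of the recursion comes from the observation that the derived pairs $(U^{(w)},V^{(w)})$ are all read off a fixed pair of automata by moving the initial state (and, for $U$, the set of accept states), so only finitely many distinct configurations occur. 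This is essentially a self-contained re-proof of the relevant part of \cite[Theorem 3.5]{[CS24]}: your finitely-many-configurations argument is what the set $S$ of state triples encodes there, and your version makes the termination mechanism more transparent, at the cost of having to argue separately that every element of $\ConjGeo(U,V)$ is captured at some finite depth of the recursion (true, since each unwrapping strictly shortens the conjugator, but you should say so) and to handle the degenerate cases where the empty word lies in $\overline{V}$ (giving the regular term $\ConjGeo(F_X)\cap\overline{U^{(w)}}$) or where words of $\overline{U}$ are too short to both begin with $x$ and end with $x^{-1}$. Your opening observation that $\ConjGeo(U,V)=\ConjMinLenSL(U,V)$ over a free basis, because geodesics are unique, is a slightly slicker disposal of the shortlex half than the paper's intersection with $\SL(F_X)$, and is correct.
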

\begin{proof}
	We follow the strategy of the proof of \cite[Theorem 3.5]{[CS24]}. Let $\mathcal{A}=(Q, q_0, T,E)$ and $\mathcal{A}'=(Q',q_0', T',E')$ denote respectively the minimal automata recognizing $\overline{U}$ and $\overline{V}$. Consider the sets 
	$$
	S=\{(q,p',q')\in Q\times Q'\times Q'\mid L_{q_0q}\cap L'_{q_0'p'}\cap (L'_{q'T'})^{-1},\ L'_{p'q'}\setminus \{1\}\neq \emptyset\},
	$$
and,	for each $a\in \tilde{X}$,
	$$
	Y_a=\{\overline{w^{-1}v_2w}\mid \exists (q,p',q')\in S, v_2\in L'_{p'q'}\cap \tilde{X}^*a,\ w\in L_{qT}\setminus a^{-1}\tilde{X}^*\}\subseteq F_X,
	$$
	and 
		$$
	Z_a=\{\overline{w^{-1}v_2w}\mid \exists (q,p',q')\in S, v_2\in L'_{p'q'}\cap a\tilde{X}^*,\ w\in L_{qT}\setminus a\tilde{X}^*\}\subseteq F_X.
	$$
	
	 It is proved in \cite[Theorem 3.5]{[CS24]} that
	\begin{align*}
		Y_a&=\bigcup_{(q,p',q')\in S}\alpha(L'_{p',q'}\cap\tilde X^*a, L_ {qT}\setminus a^{-1}\tilde X^*), \\
			Z_a&=\bigcup_{(q,p',q')\in S}\alpha(L'_{p',q'}\cap a\tilde X^*, L_ {qT}\setminus a\tilde X^*),
	\end{align*}
	and
	\begin{equation}\label{alpha_prova}
		\alpha(U,V)=\bigcup_{a\in \tilde{X}} (Y_a\cup Z_a).
	\end{equation}
	
	In our case we have the following:
	\begin{itemize}
		\item Since $(L'_{p',q'}\cap \tilde X^*a)( L_ {qT}\setminus a^{-1}\tilde X^*)$ is reduced for all $(q,p',q')\in S$, then $\ConjGeo(L'_{p',q'}\cap \tilde X^*a, L_ {qT}\setminus a^{-1}\tilde X^*)$ is regular by Proposition \ref{ConjGeo_livres1}. Hence the following language is regular (as it is a finite union of regular languages)
		$$
		\tilde{Y}_a:= \bigcup_{(q,p'q')\in S} \ConjGeo(L'_{p',q'}\cap \tilde X^*a, L_ {qT}\setminus a^{-1}\tilde X^*).
		$$
		
		\item Since $( L_ {qT}\setminus a\tilde X^*)^{-1}(L'_{p',q'}\cap a\tilde X^*)$ is reduced for all $(q,p',q')\in S$, then $\ConjGeo(L'_{p',q'}\cap a\tilde X^*, L_ {qT}\setminus a\tilde X^*)$ is regular by Corollary \ref{ConjGeo_livres2}. Hence the following language is also regular:
		$$
		\tilde{Z}_a:= \bigcup_{(q,p'q')\in S} \ConjGeo(L'_{p',q'}\cap a\tilde X^*, L_ {qT}\setminus a\tilde X^*)
		$$
	\end{itemize} 
	
	Therefore, it suffices to prove that
	$$
	\ConjGeo(U,V)=\bigcup_{a\in \tilde{X}} (\tilde{Y}_a\cup \tilde{Z}_a),
	$$
	to show that $\ConjGeo(U,V)$ is regular.
	
	We have the following 
	\begin{align*}
		\tilde{Y}_a:&= \bigcup_{(q,p'q')\in S} \bigl[\ConjGeo(F_X)\cap \alpha(L'_{p',q'}\cap \tilde X^*a, L_ {qT}\setminus a^{-1}\tilde X^*)\bigr]
		\\
		&= \ConjGeo(F_X)\cap \bigl[\bigcup_{(q,p'q')\in S} \alpha(L'_{p',q'}\cap \tilde X^*a, L_ {qT}\setminus a^{-1}\tilde X^*)\bigr]\\
		&= \ConjGeo(F_X)\cap Y_a
	\end{align*}
	Analogously,
	$$
	\tilde{Z}_a=\ConjGeo(F_X)\cap Z_a
	$$
	
	Then
	\begin{align*}
		\bigcup_{a\in \tilde{A}}(\tilde{Y}_a\cup \tilde{Z}_a)&= \bigcup_{a\in \tilde{A}}\bigl[ (\ConjGeo(F_X)\cap Y_a) \cup (\ConjGeo(F_X)\cap Z_a) \bigr] \\
		&= \bigcup_{a\in \tilde{A}}\bigl[ \ConjGeo(F_X) \cap (Y_a\cup Z_a) \bigr] \\
		&= \ConjGeo(F_X)\cap \left[\bigcup_{a\in \tilde{A}}(Y_a\cup Z_a) \right] \\
		&= \ConjGeo(F_X)\cap \alpha(U,V) = \ConjGeo(U,V),
	\end{align*}
	where the second-to-last equality follows from (\ref{alpha_prova}).
	Therefore, $\ConjGeo(U,V)$ is regular, as intended.
	
	Finally, $\ConjMinLenSL(U,V)$ is regular since $\ConjMinLenSL(U,V)=\ConjGeo(U,V)\cap \SL(F_X)$, and $SL(F_X)$ is regular by \cite[Theorem 2.5.1]{[ECHLPT92]}.
\end{proof}

\section{Hyperbolic groups}\label{chp_hyp}

A group is $\delta$-hyperbolic if its Cayley graph satisfies the property that every geodesic triangle is $\delta$-thin, meaning that each side is contained in the $\delta$-neighborhood of the union of the other two sides. 

Let $G$ be a $\delta$-hyperbolic group with generating set $X$, and let $d$ denote the word metric on $G$. Let $u \in X^*$ be a geodesic word and $v \in X^*$ a $(\lambda,\varepsilon)$-quasigeodesic word representing the same element of $G$. We say that $u$ and $v$ satisfy the \emph{asynchronous fellow travel property} if there exists a non-decreasing function $h \colon \{0,1,\dots,|u|\} \to \{0,1,\dots,|v|\}$ with $h(0)=0$ and $h(|u|)=|v|$, and a constant $K \ge 0$ (depending on $\delta,\ \lambda$ and $\varepsilon$), such that, for all $n = 0,1,\dots,|u|$,
\[
d\big(u^{[n]},\, v^{[h(n)]}\big) \le K.
\]
If, in addition, there exists a constant $L \ge 0$ such that 
\[
|h(n+1) - h(n)| \le L \quad \text{for all } n = 0, 1, \dots, |u|-1,
\]
then we say that $u$ and $v$ satisfy the \emph{boundedly asynchronous fellow travel property}.

We now present some useful known results:

\begin{lemma}\cite[Chapter III.$\Gamma$, Lemma 2.9]{[BH99]}\label{bridsonhaefliger}
Let $G$ be a $\delta$-hyperbolic group with respect to a finite generating set $X$. If two fully reduced words $u,v\in F_X$ represent conjugate elements of $G$, then either $\max\{|u|,|v|\}\leq 8\delta+1$ or there exist cyclic permutations $u'$ and $v'$ of $u$ and $v$ and a word $w\in F_X$ of length at most $2\delta+1$ such that $wu'w^{-1}=v'$ in $G$.
\end{lemma}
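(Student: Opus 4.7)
The lemma is a classical geometric result, and I would attack it via the thin-quadrilateral property of $\delta$-hyperbolic spaces. Since $u\pi$ and $v\pi$ are conjugate, choose a triple $(u',v',z')$ with $u'$ a cyclic permutation of $u$, $v'$ a cyclic permutation of $v$, and $z'\in G$ satisfying $z'^{-1}u'\pi z'=v'\pi$, minimizing $|z'|$. Relabeling back to $(u,v,z)$, the task becomes: either $|z|\le 2\delta+1$, or $\max\{|u|,|v|\}\le 8\delta+1$.

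Construct in $\Gamma_X(G)$ the geodesic quadrilateral $Q$ with vertices $1,\ u\pi,\ u\pi z = z v\pi,\ z$, whose sides are read by $u$, a chosen geodesic word for $z$, $v$, and the same geodesic word for $z$. The sides labeled by $u$ and $v$ are geodesic since $u,v$ are fully reduced. In a $\delta$-hyperbolic space, geodesic quadrilaterals are $2\delta$-thin. Assuming $|z|>2\delta+1$, pick a vertex $a$ on the left $z$-side that is far from both endpoints; thinness places $a$ within $2\delta$ of a vertex on one of the other three sides. If $a$ is within $2\delta$ of a vertex on the right $z$-side, a short geodesic connecting them cuts $Q$ into two pieces and, after rearranging, produces a strictly shorter conjugator of suitable cyclic permutations of $u$ and $v$, contradicting minimality.

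If instead $a$ lies within $2\delta$ of a vertex $u^{[i]}\pi$ on the $u$-side (the $v$-side is symmetric), write $u=u_1u_2$ with $u_1=u^{[i]}$ and use the identity $u_1^{-1}\pi\, u\pi\, u_1\pi = u_2\pi u_1\pi$, which is a cyclic permutation of $u\pi$. Combining this with $u\pi z=zv\pi$ rewrites the conjugator equation as a conjugator equation for the cyclic permutation $u_2u_1$, with a new conjugator built from a short word $w$ (of length at most $2\delta$, chosen so that $a=u^{[i]}\pi\cdot w\pi$ in $G$) and the suffix of $z$ beyond $a$. The key point is that this new conjugator starts at the vertex $u^{[i]}\pi$ rather than at $1$, so iterating the procedure from both $z$-sides and from both endpoints sweeps along $u$ and $v$ in $2\delta$-neighborhoods. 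Once the neighborhoods from the two sides meet, a length count gives $\max\{|u|,|v|\}\le 8\delta+1$: two $z$-sides, each contributing a $2\delta$-thick strip on $u$ (and likewise on $v$) from each endpoint, yield $4\cdot 2\delta$ plus endpoint overhead.

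The main obstacle is this second case: the rewriting does not immediately reduce $|z|$, so minimality does not give an instant contradiction, and one has to argue that only a bounded number of cyclic reshufflings can occur before either a genuinely shorter conjugator appears (contradicting minimality) or the two $z$-sides have swept across all of $u$ and $v$. Making the bookkeeping rigorous, tracking which vertices of $u$ and $v$ are paired to which vertex of the $z$-sides by the thinness constant, is the step I expect to be the most delicate, and is where the explicit constant $8\delta+1$ is pinned down.
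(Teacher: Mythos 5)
This lemma is quoted by the paper directly from Bridson--Haefliger (III.$\Gamma$.2.9) with no proof given, so the comparison is against the standard argument in that reference. Your single-quadrilateral strategy has a genuine gap in its first case: if a vertex $a=z^{[j]}\pi$ on the left $z$-side is $2\delta$-close to a vertex $b=u\pi\, z^{[i]}\pi$ on the right $z$-side, cutting $Q$ along a geodesic from $a$ to $b$ only shows that $u$ and $v$ are each conjugate, by shorter words, to the \emph{intermediate} element $h=(z^{[j]}\pi)^{-1}u\pi\, z^{[i]}\pi$ of length at most $2\delta$. Since $h$ need not be a cyclic permutation of $u$ or of $v$ (conjugation by a proper prefix of $z$ is not a cyclic permutation), this does not contradict the minimality of $|z|$ \emph{among conjugators of cyclic permutations}, which is the only minimality you have. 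Ironically, the case you flag as the delicate one is the easy one: if $a=z^{[j]}\pi$ with $j\geq 2\delta+1$ is $2\delta$-close to $u^{[i]}\pi$, then writing $u=u_1u_2$ with $u_1=u^{[i]}$, the word $u_1^{-1}z$ conjugates the cyclic permutation $u_2u_1$ to $v$ and equals (in $G$) a word of length at most $2\delta+(|z|-j)<|z|$, an immediate contradiction; the symmetric statement holds for the $v$-side when $j\leq |z|-2\delta-1$. So after reorganizing, everything reduces to the cross-connection case, which your argument does not close.

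There is also a structural reason the approach cannot be completed as stated: when $2\delta+2\leq |z|\leq 4\delta+1$ there is no index $j$ satisfying both depth conditions, yet the lemma still asserts that a long fully reduced $u$ forces a conjugator of length at most $2\delta+1$ between cyclic permutations. The actual proof does not work inside one geodesic quadrilateral at all. It uses that a fully reduced word of length greater than $8\delta+1$ labels a $k$-local geodesic with $k>8\delta$ when concatenated with itself, and that such local geodesics are quasigeodesics lying in the $2\delta$-neighborhood of geodesics with the same endpoints (BH III.H.1.13); applying this to the bi-infinite periodic paths labelled by powers of $u$ and of $v$ (which lie at bounded Hausdorff distance because $u^n z=_G z v^n$) pairs each vertex of the $u$-path with a nearby vertex of the $v$-path, and that difference is precisely the conjugator of length at most $2\delta+1$ between cyclic permutations. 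Both thresholds $8\delta+1$ and $2\delta+1$ come from that stability statement, not from a count of $2\delta$-thick strips in a quadrilateral, and your heuristic ``$4\cdot 2\delta$ plus endpoint overhead'' does not substitute for it.
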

\begin{lemma}\cite[Lemma 16]{[HRR11]}\label{lema holt qr}
If $u$ and $v$ are fully $(\lambda,\varepsilon)$-quasireduced words representing conjugate elements of a $\delta$-hyperbolic group, then either $\max(|u|,|v|)\leq \lambda(8\delta+2\gamma+\varepsilon+1)$ or there exist cyclic conjugates $u'$ and $v'$ of $u$ and $v$ and a word $\alpha$ with $(\alpha u'\alpha^{-1})\pi=v'\pi$ and $|\alpha|\leq 2(\delta+\gamma)$, where $\gamma$ is the boundedly asynchronous fellow travel constant satisfied by $(\lambda,\varepsilon)$-quasigeodesics with respect to geodesics. 
\end{lemma}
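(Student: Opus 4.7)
The plan is to reduce this quasi-geodesic statement to the fully reduced case (Lemma \ref{bridsonhaefliger}), using the boundedly asynchronous fellow travel between $(\lambda,\varepsilon)$-quasigeodesics and geodesics available in a $\delta$-hyperbolic group. I first pick fully reduced representatives $u_0,v_0$ of elements in the common conjugacy class $[u\pi]=[v\pi]$; such $u_0,v_0$ exist (shortest representatives in the class are fully reduced), and all their cyclic permutations remain fully reduced. Since $u$ and $u_0\pi$ need not be equal, one aligns them via cyclic permutation plus a short conjugator: the biinfinite path traced by $\ldots uuu\ldots$ is a $(\lambda,\varepsilon)$-quasigeodesic line (from full quasi-reduction) while $\ldots u_0u_0u_0\ldots$ is a geodesic line, both converging to the same pair of endpoints on $\partial G$. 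Stability of quasi-geodesics then yields cyclic permutations $\hat u$ of $u$ and $\hat u_0$ of $u_0$ together with $p\in G$ of length at most $\gamma$ satisfying $p^{-1}(\hat u\pi)p=\hat u_0\pi$. The same construction applied to $v,v_0$ yields $\hat v,\hat v_0$ and $q$ with $|q|\leq \gamma$ and $q^{-1}(\hat v\pi)q=\hat v_0\pi$.

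The next step is a direct length comparison. From the quasi-geodesic inequality, $|\hat u\pi|\geq |u|/\lambda-\varepsilon$; from $\hat u\pi=p(\hat u_0\pi)p^{-1}$ and the triangle inequality, $|\hat u\pi|\leq |u_0|+2\gamma$. Hence $|u_0|\geq |u|/\lambda-\varepsilon-2\gamma$, and similarly for $v_0$. So if $\max(|u|,|v|) > \lambda(8\delta+2\gamma+\varepsilon+1)$, then $\max(|u_0|,|v_0|)>8\delta+1$, placing $\hat u_0,\hat v_0$ (themselves fully reduced and representing conjugate elements) squarely in the second case of Lemma \ref{bridsonhaefliger}. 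That lemma then provides cyclic permutations $u_0^{*}$ of $\hat u_0$ (hence of $u_0$) and $v_0^{*}$ of $\hat v_0$ (hence of $v_0$), together with a conjugator $w$ of length at most $2\delta+1$ such that $w(u_0^{*}\pi)w^{-1}=v_0^{*}\pi$.

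Finally, I pull this conjugator back to $u$ and $v$. The passage $\hat u_0\to u_0^{*}$ corresponds to a particular cyclic split of $\hat u_0$, which determines a matching cyclic split of $\hat u$ and hence a cyclic permutation $u'$ of $u$; similarly for $v'$. Combining $w$ with the adjustments of length at most $\gamma$ on each side yields a word $\alpha$ with $\alpha(u'\pi)\alpha^{-1}=v'\pi$ and length controlled by $2(\delta+\gamma)$. The main obstacle I anticipate is making Step 1 rigorous: the informal assertion that fully quasireduced and fully reduced representatives of the same conjugacy class become conjugate (after suitable cyclic permutation) by an element of length at most $\gamma$ needs careful treatment of basepoint alignment on the common axis and of the cuts used for cyclic permutation. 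The final constant also requires some bookkeeping, since a naive combination gives $2\delta+1+2\gamma$; absorbing the extra $+1$ into the clean $2(\delta+\gamma)$ bound likely requires choosing $w$ as a geodesic conjugator and letting one of the fellow-travel adjustments absorb the additive unit, rather than adding it separately.
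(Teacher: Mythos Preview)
The paper does not prove this lemma at all: it is quoted verbatim from \cite[Lemma 16]{[HRR11]} with no argument given, so there is no ``paper's own proof'' to compare against. What I can do is assess your reduction strategy on its own terms and against the argument that actually appears in \cite{[HRR11]}.

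Your reduction to Lemma~\ref{bridsonhaefliger} has a genuine gap at the very first step. You assert that the biinfinite path $\ldots uuu\ldots$ is a $(\lambda,\varepsilon)$-quasigeodesic line ``from full quasi-reduction''. Full $(\lambda,\varepsilon)$-quasi-reduction only says that each cyclic permutation of $u$ (a word of length $|u|$) is a $(\lambda,\varepsilon)$-quasigeodesic; it says nothing about subpaths of $u^{\infty}$ of length exceeding $|u|$. If $u\pi$ has finite order the biinfinite path is not a quasigeodesic at all, and even when $u\pi$ is loxodromic one only gets, via a local--to--global principle, that $u^{\infty}$ is a $(\lambda',\varepsilon')$-quasigeodesic for \emph{different} constants depending on $\lambda,\varepsilon,\delta$. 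The Morse/fellow-travel constant you would then obtain is some $\gamma'=\gamma'(\lambda',\varepsilon',\delta)$, not the $\gamma$ of the statement, so the bound $|p|\leq\gamma$ and hence the final $2(\delta+\gamma)$ do not follow. This is not just the bookkeeping issue you flag at the end; it undermines the claimed constants from the outset.

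For comparison, the proof in \cite{[HRR11]} does not reduce to the geodesic case but reruns the Bridson--Haefliger quadrilateral argument directly with quasigeodesic sides: one picks cyclic permutations $u',v'$ minimising the length of a conjugator $\alpha$, forms the rectangle with sides labelled $\alpha,u',\alpha^{-1},v'^{-1}$, replaces $u',v'$ by nearby geodesics at cost $\gamma$ on each side, applies $2\delta$-thinness of the resulting geodesic quadrilateral, and then uses the usual pigeonhole-on-labels argument to cap $|\alpha|$. That is why the constant comes out as $2(\delta+\gamma)$ exactly and why the short-word threshold becomes $\lambda(8\delta+2\gamma+\varepsilon+1)$: the extra $2\gamma$ is the price of passing once between the quasigeodesic sides and their companion geodesics, applied inside a single quadrilateral rather than accumulated across two separate axis comparisons as in your outline.
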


\begin{lemma}\cite[Proposition 3.19]{[CS24]}\label{minepedro}
	If $G$ is a virtually free group with hyperbolicity constant $\delta$, and $K$ is a regular language of geodesics, then there is a regular language $L_K\subseteq \Geo(\alpha(K))$ such that, for every element $g\in K$, there is at least one fully $(1,3\delta+2R+3)$-quasireduced word in $L_K$ representing an element conjugate to $g$, where $R$ is the asynchronously fellow travel constant depending on $(1,3\delta+1,\delta)$.
\end{lemma}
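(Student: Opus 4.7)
The plan is to build $L_K$ in two stages: first, extract from $K$ a regular language of cyclic shifts that are $(1,3\delta+1)$-quasigeodesic and contain, up to conjugacy, every element of $K\pi$; then replace each such word by an asynchronously close geodesic using the biautomatic structure of $G$.

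First I would form the language of cyclic shifts. Let $\mathcal{A}=(Q,q_0,T,E)$ be a finite automaton recognising $K$; then $\Cyc(K)$ is regular, as it can be written as the finite union $\bigcup_{p\in Q,\,t\in T}L_{p,t}L_{q_0,p}$, where $L_{p,q}$ denotes the set of labels of paths from $p$ to $q$ in $\mathcal{A}$. Each $vu\in\Cyc(K)$ with $uv\in K$ represents $u^{-1}(uv)u\pi$, a conjugate of an element of $K\pi$. Since $G$ is hyperbolic (hence biautomatic), the set of $(1,3\delta+1)$-quasigeodesic words in $\tilde X^*$ is regular, so $M_K:=\Cyc(K)\cap\{(1,3\delta+1)\text{-quasigeodesics}\}$ is regular. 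A standard hyperbolic-geometry argument shows that for every $g\in K\pi$ and any geodesic representative $w\in K$ of $g$, some cyclic shift $w^*$ of $w$ is itself fully $(1,3\delta+1)$-quasireduced (one picks the shift that minimises the ``backtracking'' of the loop based at $1$ labelled by $w$). Thus $w^*\in M_K$ and $w^*\pi$ is conjugate to $g$.

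Next I would replace each word of $M_K$ by an asynchronously $R$-close geodesic. Because $(1,3\delta+1)$-quasigeodesic words asynchronously $R$-fellow travel geodesic representatives of the same element, the relation
$$
\{(\gamma,w)\in\Geo(G)\times M_K \mid \gamma\pi=w\pi \text{ and } \gamma \text{ asynchronously } R\text{-fellow travels } w\}
$$
is recognised by a two-tape padded finite automaton, obtained from the multiplier automata of the biautomatic structure of $G$ and the automaton for $M_K$. Projecting onto the first coordinate yields a regular language $L_K\subseteq\Geo(G)$ with $L_K\pi\subseteq\alpha(K)$, and for each $g\in K\pi$ the geodesic associated to the fully quasireduced word $w^*$ from the previous step provides at least one representative of a conjugate of $g$ in $L_K$.

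Finally I would verify the quasi-reduction constant. Take $\gamma\in L_K$ associated to a fully $(1,3\delta+1)$-quasireduced $w^*\in M_K$, and let $\gamma=\gamma_1\gamma_2$ be any cyclic decomposition. Pulling this decomposition across the asynchronous alignment $h$ transports it to a cyclic decomposition $w^*=w^*_1 w^*_2$ with $d(\gamma_1\pi,w^*_1\pi)\le R$. The word $w^*_2w^*_1$ is $(1,3\delta+1)$-quasigeodesic because $w^*$ is fully quasireduced, and its prefixes remain within $R$ of the corresponding prefixes of $\gamma_2\gamma_1$ up to a small slack of at most $2$ for the discrete indexing shift. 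A triangle-inequality estimate combining these two bounds then shows that $\gamma_2\gamma_1$ is $(1,3\delta+2R+3)$-quasigeodesic, so $\gamma$ is fully $(1,3\delta+2R+3)$-quasireduced as required. The main obstacle is precisely this last step: pinning down the exact constant $3\delta+2R+3$ requires carefully chaining two applications of the fellow-traveler bound (one through $h$ and one through the discrete shift induced by the cyclic decomposition) together with the quasigeodesic constant of $w^*$, and checking that no larger correction is introduced.
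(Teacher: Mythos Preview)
The paper does not supply a proof of this lemma: it is quoted from \cite[Proposition 3.19]{[CS24]} and used as a black box, so there is no in-paper argument to compare against and your sketch must stand on its own.

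It does not. The claim in your second paragraph, that ``some cyclic shift $w^*$ of $w$ is itself fully $(1,3\delta+1)$-quasireduced'', is false in general. Full $(1,\varepsilon)$-quasireduction is a property of the set of cyclic factors of a word, and cyclic permutation does not change that set: if one cyclic shift of $w$ is fully $(1,\varepsilon)$-quasireduced then every cyclic shift is, and conversely. In the free group (take $\delta=0$, so $3\delta+1=1$) the geodesic $w=a^nba^{-n}$ has the cyclic factor $a^{-n}a^n$, of word-length $2n$ representing the identity; hence no cyclic shift of $w$ is fully $(1,\varepsilon)$-quasireduced for any $\varepsilon<2n$. Your $M_K$ therefore need not contain any fully quasireduced word conjugate to $b$, and the hypothesis on $w^*$ on which your final paragraph relies is simply unavailable. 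The ``standard hyperbolic-geometry argument'' you invoke shows only that a suitably chosen cyclic shift is a $(1,O(\delta))$-\emph{quasigeodesic}; it does not show that it is \emph{fully} quasireduced, and these are very different statements. Passing from a single quasigeodesic cyclic shift to a fully quasireduced geodesic representative of a conjugate is exactly where the substantive work in \cite{[CS24]} lies, and it requires using the extremal choice of basepoint together with hyperbolicity to control \emph{all} cyclic permutations of the replacement geodesic $\gamma$, rather than importing a fully-quasireduced property from $w^*$.

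Your third step is also not quite right as written: recognising the relation ``$\gamma$ asynchronously $R$-fellow-travels $w$'' by a synchronous padded two-tape automaton is not automatic. Here the difficulty can be sidestepped, since a $(1,\varepsilon)$-quasigeodesic has length within $\varepsilon$ of the geodesic with the same endpoints and a bounded-offset synchronous comparison therefore suffices; but this should be said explicitly rather than folded into an appeal to biautomaticity.
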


For the hyperbolic case, we prove a similar result, but only for the case where the regular language of geodesics represents a subgroup. We start by remarking that these are precisely the quasiconvex subgroups of $G$. A subgroup $H$ of a hyperbolic group $G$ is \emph{quasiconvex} if there exists a constant \( k \ge 0 \) such that every geodesic in the Cayley graph of \( G \) with endpoints in \( H \) lies within distance \( k \) of \( H \).

\begin{lemma}\label{qcregular}
Let $H$ be a finitely generated subgroup of a hyperbolic group $G=\langle X\rangle$ and $\pi:\widetilde X^*\to G$ be the standard surjective homomorphism. The following are equivalent:
\begin{enumerate}
\item $H$ is quasiconvex;
\item $\Geo(H)$ is regular;
\item there is a regular language of geodesics $L$ such that $L\pi=H$.
\end{enumerate}
\end{lemma}
\begin{proof}
The equivalence between 1 and 3 is proved in \cite[Theorem 2.2]{[GS91]} and it is obvious that 2 implies 3. So, we only have to prove that 3 implies 2.
Consider the language $K=\{(u,v)\mid u,v\in \Geo(G):u\pi=v\pi\}$. Since $\Geo(G)$ is a biautomatic structure of $G$, then, by \cite[Lemma 8.1]{[GS91b]}, $K$ is regular. Hence, the language 
$K\cap (L \times \widetilde X^*)$ is also regular and so is its projection to the second component $K'$. If $w\in \Geo(H)$, then there is some $u\in L$ such that $u\pi=w\pi$, and so $(u,w)\in K$ and $w\in K'$. Moreover, if $w\in K'$, then $w\pi=u\pi$ for some $u\in L\subseteq \Geo(H)$, and $w\in \Geo(H)$. We then have that $\Geo(H)=K'$ is regular.
\end{proof}

\begin{lemma}\label{reg_hyp}
If $G$ is a $\delta$-hyperbolic group with generating set $X$ and $H$ is a quasiconvex subgroup, then there is a regular language $L_H\subseteq \Geo(\alpha(H))$ such that, for every element $g\in H$, there is at least one fully $(1, 3\delta+2R+3)$-quasireduced word in $L_K$ representing an element conjugate to $g$, where $R$ is the asynchronously fellow travel constant depending on $(1,3\delta+1,\delta)$.
\end{lemma}
\begin{proof}
	It is easy to see that, for any $g\in G$, the subgroup $gHg^{-1}$ is also quasiconvex (and so hyperbolic), and so $\Geo(gHg^{-1})$ is regular by Lemma \ref{qcregular}.
	
	Further, since $H$ is quasiconvex, for any geodesic $u:=u_1u_2\cdots u_n$, $u_i\in X$ with end point in the Cayley graph in $H$, there exists $v:=v_1v_2\cdots v_n$ with $v_i\in \tilde X^*$ and $v_i\pi\in H$, such that
	\begin{itemize}
		\item $v_1v_2\cdots v_n=_G u_1u_2\cdots u_n$;
		\item $\exists z_i,\ i\in \{1,\cdots,n-1\}$ $\exists k>0$ such that $u_1=v_1z_1,\ u_n=z_{n-1}^{-1}v_n\ u_i=z_{i-1}^{-1}v_iz_i$, $1<i<n$ and $|z_i|\leq k$, $\forall i$.
	\end{itemize} 
	
	Now, notice that for any cyclic permutation of $u\in \Geo(H)$, we have

	\begin{align*}
			u_{i+1}\cdots u_nu_1\cdots u_i &=_G z_i^{-1}v_{i+1}z_{i+1}\cdots z_{n-1}^{-1}v_nv_1z_1\cdots z_{i-1}^{-1}v_iz_i \\
			&=_G z_i^{-1} \underbrace{v_{i+1}\cdots v_nv_1\cdots v_i}_{\in H}z_i,
	\end{align*}
	i.e., any element in $\Cyc(\Geo(H))$ can be written as a conjugate of a word representing an element of $H$ by a conjugator with bounded length. 
	
	Therefore,
	$$
	\Geo(\Cyc(\Geo(H)))\subseteq \Geo\left(\bigcup_{|z|\leq k} z^{-1}Hz\right)= \bigcup_{|z|\leq k}\Geo(z^{-1}Hz),
	$$
	where the latest language is regular (it is the finite union of regular languages). Also, we observe that for each $h\in H$, by the proof of Lemma \ref{minepedro}, there exists in $\Geo(\Cyc(\Geo(H)))$ a fully $(1, 3\delta+2R+3)$-quasireduced word representing an element conjugate to $h$, and so the same happens with $\bigcup_{|z|\leq k}\Geo(z^{-1}Hz)$. Finally, it is clear that $\bigcup_{|z|\leq k}\Geo(z^{-1}Hz)\subseteq \Geo(\alpha(H))$, and so the proof is concluded.
\end{proof}

The main goal of this section is to show that, within the class of $\delta$-hyperbolic groups, for any quasiconvex subgroup $H$ of $G$, the languages $\ConjGeo(H)$ and $\ConjMinLenSL(H)$ are a regular language of geodesics. This result extends the main theorem of \cite{[CHHR16]} to a more general setting among hyperbolic groups.  We begin by establishing a few auxiliary results.

\begin{lemma}\label{lemma_quasi_geo}
		Let $G=\langle X \rangle$ be a hyperbolic group. Let $u$ be a $(1,r)$-quasigeodesic in $\Gamma_X(G)$ and $a\in X$. Then, $au$ is a $(1,r+2)$-quasigeodesic.
	\end{lemma}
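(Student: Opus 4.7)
The plan is to unpack the definition of $(\lambda,\varepsilon)$-quasigeodesic with $\lambda=1$ and check the two inequalities directly on the prefixes of $au$, splitting on whether the shorter prefix is the empty word or includes the leading letter $a$.

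The upper bound is immediate: for any word $w$ and any $0 \leq i \leq j \leq l(w)$, reading $w$ from position $i$ to position $j$ produces a path of length $j-i$ joining $w^{[i]}\pi$ to $w^{[j]}\pi$, so $d(w^{[i]}\pi,w^{[j]}\pi) \leq j-i \leq (j-i)+(r+2)$ holds trivially. So all the work is in the lower bound $d((au)^{[i]}\pi,(au)^{[j]}\pi) \geq (j-i)-(r+2)$ for $0 \leq i \leq j \leq l(u)+1$.

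For $i \geq 1$, write $(au)^{[i]} = a\cdot u^{[i-1]}$ and $(au)^{[j]} = a\cdot u^{[j-1]}$. By left-invariance of the word metric,
$$d\bigl((au)^{[i]}\pi,(au)^{[j]}\pi\bigr) = d\bigl(u^{[i-1]}\pi,u^{[j-1]}\pi\bigr) \geq (j-1)-(i-1)-r = (j-i)-r,$$
using that $u$ is a $(1,r)$-quasigeodesic and that $0 \leq i-1 \leq j-1 \leq l(u)$. This is even stronger than needed. For $i = 0$ and $j = 0$, the inequality is trivial. For $i=0$ and $j \geq 1$, use the triangle inequality:
$$d\bigl(1,(au)^{[j]}\pi\bigr) \geq d\bigl(1,u^{[j-1]}\pi\bigr) - d\bigl(u^{[j-1]}\pi, a\cdot u^{[j-1]}\pi\bigr) \geq \bigl((j-1)-r\bigr) - 1 = j - (r+2),$$
where the first term uses the $(1,r)$-quasigeodesic property of $u$ applied to indices $0$ and $j-1$, and the second uses left-invariance plus $|a|=1$.

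There is no real obstacle here; the constant $r+2$ is tight for the argument because prepending one letter costs at most $1$ in the triangle inequality bound in the boundary case $i=0$, and the extra slack of $1$ absorbs the shift between the index $j$ on $au$ and the index $j-1$ on $u$.
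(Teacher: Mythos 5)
Your overall strategy (trivial upper bound, left-invariance for $i \geq 1$, triangle inequality for the boundary case $i=0$) is essentially the paper's, and the case $i\geq 1$ and the upper bound are fine. But the justification of the key boundary case $i=0$, $j\geq 1$ contains a genuine error: you bound $d\bigl(u^{[j-1]}\pi,\, a\cdot u^{[j-1]}\pi\bigr)$ by $1$ ``by left-invariance plus $|a|=1$''. Left-invariance of the word metric says $d(gx,gy)=d(x,y)$; it does not say $d(x,ax)=d(1,a)$. Writing $g=u^{[j-1]}\pi$, you have $d(g,\,a\pi\cdot g)=|g^{-1}(a\pi)g|_X$, the length of a \emph{conjugate} of $a$, which can be arbitrarily large (in the free group on $\{a,b\}$ with $g=b^n$ one gets $|b^{-n}ab^n|=2n+1$). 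So the displayed inequality chain, as justified, would fail.

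The repair is immediate and is exactly what the paper does: apply the triangle inequality at the two \emph{starting} points rather than at the two endpoints. Since $d(1,a\pi)\leq 1$ and, by (genuine) left-invariance, $d(a\pi,\,a\pi\cdot u^{[j-1]}\pi)=d(1,u^{[j-1]}\pi)\geq (j-1)-r$, you get
\[
d\bigl(1,(au)^{[j]}\pi\bigr)\;\geq\; d\bigl(a\pi,\,a\pi\cdot u^{[j-1]}\pi\bigr)-d(1,a\pi)\;\geq\;(j-1)-r-1\;=\;j-(r+2).
\]
With that substitution your argument coincides with the paper's proof, which phrases the same estimate as $d_v(1,p_n)=1+d_v(q,p_n)\leq 1+d(q,p_n)+r\leq 1+r+1+d(1,p_n)$ with $q=a\pi$; nothing else in your write-up needs to change.
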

	\begin{proof}
		Let $p$ be the endpoint of $v:=au$ in $\Gamma_X(G)$.
		\begin{center}		
		\begin{tikzpicture}[scale=1.5, every node/.style={font=\small}]
			\coordinate (O) at (0,0);
			\coordinate (A) at (0,0.5);
			\coordinate (P) at (1.5,0.5);
			
			\draw[->] (O) -- (A) node[midway,left] {$a$};
			\draw[->,decorate,decoration={snake,segment length=3mm}] (A) -- (P) node[midway, above] {$u$};
			
			\fill (O) circle (1pt) node[below left] {$1$};
			\fill (A) circle (1pt)  node[left]  {$q$};
			\fill (P) circle (1pt) node[right] {$p$};
		\end{tikzpicture}
	\end{center}
	
	For $n\in \N$, let $p_n:=au^{[n]}$, for all $n\in \mathbb{N}$. We denote by $d_v$ the distance in the word $v$ and by $d$ the distance in the Cayley graph. We have,
	$$
	d_v(1,p_n) = 1 + d_v(q,p_n) \leq 1 + d(q,p_n) + r \leq 1 + r + 1 + d(1,p_n) = d(1,p_n) + r + 2.
	$$ 
	Further, if we take $p_n$ and $p_m$ lying inside $u$, the inequality follows from the fact that $u$ is $(1,r)$-quasigeodesic, i.e.,
	$$d_v(p_m,p_n)\leq d(p_m,p_n)+r\leq d(p_m,p_n)+r+2.$$
	
	Hence, $v$ is a $(1,r+2)$-quasigeodesic.
	\end{proof}

The following lemma is a generalization of \cite[Proposition 3.1]{[Car22c]}.
\begin{proposition}\label{prop_biaut}
	Let $G=\langle X \rangle$ be a $\delta-$hyperbolic group. Let $u$ be a $(1,r)$-quasigeodesic in $\Gamma_X(G)$ and $v$ be a $(1,s)$-quasigeodesic in $\Gamma_X(G)$ with starting points and ending points with distance at most one, respectively. Then, there is a constant $N$ depending on $r,s,\delta$ such that, for all $n\in \mathbb{N}$, we have
	$$
 d_X(u^{[n]}\pi,v^{[n]}\pi)\leq N
	$$
\end{proposition}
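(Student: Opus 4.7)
The plan is to reduce the synchronous fellow-traveller assertion to the Hausdorff-distance version of the Morse lemma (stability of quasi-geodesics in $\delta$-hyperbolic spaces) and then to upgrade this Hausdorff bound to a \emph{synchronous} one by exploiting the hypothesis $\lambda = 1$, which forces the parameter length of $u$ and $v$ to agree with the metric distance up to a uniform additive error.

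First, using Lemma~\ref{lemma_quasi_geo}, I would prepend an edge from $v^{[0]}\pi$ to $u^{[0]}\pi$ and append an edge from $u^{[l(u)]}\pi$ to $v^{[l(v)]}\pi$ to $u$ (whenever the corresponding endpoints differ), obtaining a $(1,r+2)$-quasigeodesic $u'$ that shares its starting and ending vertex with $v$. The Morse lemma then yields a constant $M = M(r, s, \delta)$ such that every vertex of $u'$ lies within distance $M$ of some vertex of $v$ in $\Gamma_X(G)$. Restricting to the vertices that belong to the original $u$, I conclude that for every $n$ with $0 \le n \le l(u)$, there is some $m$ with $0 \le m \le l(v)$ and $d_X(u^{[n]}\pi, v^{[m]}\pi) \le M$.

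To pass from this asynchronous bound to a synchronous one, I would use the $(1,r)$-quasigeodesic property of $u$ to get $|n - d_X(u^{[0]}\pi, u^{[n]}\pi)| \le r$, and analogously $|m - d_X(v^{[0]}\pi, v^{[m]}\pi)| \le s$. Combining these via the triangle inequality with $d_X(u^{[0]}\pi, v^{[0]}\pi) \le 1$ forces $|m - n| \le M + r + s + 1$, and hence
\[
d_X(u^{[n]}\pi, v^{[n]}\pi) \le d_X(u^{[n]}\pi, v^{[m]}\pi) + d_X(v^{[m]}\pi, v^{[n]}\pi) \le 2M + r + s + 1.
\]
Boundary cases where $n$ exceeds $l(u)$ or $l(v)$ reduce to the above by a direct triangle inequality, using that $|l(u) - l(v)| \le r + s + 2$ (a consequence of comparing the quasi-geodesic total-length estimates with the endpoint hypothesis). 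The main obstacle I expect is the bookkeeping around the Morse lemma to ensure that $M$ depends only on $r, s, \delta$; once the constants are set up correctly, the $\lambda = 1$ trick delivers the synchronous estimate essentially for free, and the constant $N = 2M + r + s + 3$ (absorbing the boundary slack) satisfies the statement.
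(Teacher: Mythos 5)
Your argument is correct, and it is a genuinely more self-contained route than the one the paper takes. The paper's proof handles only the reduction step explicitly: it uses Lemma \ref{lemma_quasi_geo} to prepend a generator so that the two quasigeodesics share a starting point, shifts the index by one, and then delegates the entire synchronous estimate for the common-starting-point case to the cited Proposition 3.1 of \cite{[Car22c]}. You instead prove that core estimate from scratch: you normalise both endpoints (note that appending an edge at the terminal end requires the mirror image of Lemma \ref{lemma_quasi_geo}, applied to the reversed word, so your modified path is a $(1,r+4)$- rather than a $(1,r+2)$-quasigeodesic --- harmless, since only the dependence on $r$ matters), invoke the Morse lemma to get an asynchronous Hausdorff bound $M(r,s,\delta)$, and then exploit $\lambda=1$ to convert the asynchronous bound into a synchronous one via the quadrilateral inequality $|n-m|\leq M+r+s+1$. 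Your treatment of the boundary cases $n>l(u)$ or $n>l(v)$, using $|l(u)-l(v)|\leq r+s+2$, is also a point the paper glosses over. What the paper's version buys is brevity and a single external reference; what yours buys is a complete, checkable argument whose constants are explicit in $r$, $s$ and $\delta$, and which does not depend on the reader unwinding \cite{[Car22c]}. The only bookkeeping worth tightening is the passage from ``within $M$ of the image of $v$'' to ``within $M$ of a vertex $v^{[m]}\pi$'', which costs at most an additive constant and does not affect the conclusion.
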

\begin{proof} If $u$ and $v$ have the same starting point, this is proved in  \cite[Proposition 3.1]{[Car22c]}.
	
 So, assume that $u$ and $v$ have starting point at distance one. Then, there exists $x\in X$ such that $xu$ and $v$ have the same starting point and end at distance at most one. Since the Cayley graph of $G$ with respect to $X$ is vertex-transitive, we can assume that $1$ is the starting point of both $xu$ and $v$. Let $p$ and $q$ be the endpoints of $xu$ and $v$ and consider a geodesic $w$ from $q$ to $p$.
\begin{center}		
	\begin{tikzpicture}[scale=1, every node/.style={font=\small}, decoration={snake,amplitude=0.6mm,segment length=2mm}]
		\coordinate (O) at (0,0);
		\coordinate (A) at (0.5,0.5);
		\coordinate (P) at (1.5,1.5);
		\coordinate (B) at (1.5,0);
		
		\draw[->] (O) -- (A) node[midway,left] {$x$};
		\draw[->,decorate] (A) -- (P) node[midway,above] {$u$};  
		\draw[->,decorate] (O) -- (B) node[midway,below right] {$v$};  
		\draw[->] (B) -- (P) node[midway, right] {$w$};
		
		\fill (O) circle (1pt) node[below left] {$1$};
		\fill (A) circle (1pt);
		\fill (P) circle (1pt) node[right] {$p$};
		\fill (B) circle (1pt) node[right] {$q$};
	\end{tikzpicture}
\end{center}

	By Lemma \ref{lemma_quasi_geo}, $xu$ is a $(1,r')$-quasigeodesic. Let $k=\max\{r',s\}$. Then $xu,v$ and $w$ are $(1,k)$-quasigeodesics. Let $z:=xu$. We have the following 
	
	$$
	d_X(u^{[n]}\pi,v^{[n]}\pi) = d_X(z^{[n+1]}\pi,v^{[n]}\pi) \leq d_X(z^{[n]}\pi,v^{[n]}\pi) + 1 \leq C + 1.
	$$ 	
	
	Where $C$ is a constant obtained as in \cite[Proposition 3.1]{[Car22c]} depending on $r$, $s$ and $\delta$.
\end{proof}

\begin{corollary}
	Let $G=\langle X\rangle$ be a hyperbolic group. Then, for every positive constant $\varepsilon$, the language of $(1,\varepsilon)$-quasigeodesics form a biautomatic structure.
	\end{corollary}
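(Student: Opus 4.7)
The plan is to verify the three ingredients of a biautomatic structure for the language
\[
L_\varepsilon := \{w \in \tilde X^* \mid w \text{ is a } (1,\varepsilon)\text{-quasigeodesic}\},
\]
namely: that $L_\varepsilon$ projects onto $G$, that it is regular, and that it enjoys the two-sided synchronous fellow traveller property. The surjectivity $L_\varepsilon \pi = G$ is immediate, since every geodesic is a $(1,\varepsilon)$-quasigeodesic and every element of $G$ admits a geodesic representative, so $L_\varepsilon$ will serve as a valid word acceptor.

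Next I would establish regularity of $L_\varepsilon$. The approach is to use the Morse-type stability of quasigeodesics in $\delta$-hyperbolic groups: $(1,\varepsilon)$-quasigeodesics stay uniformly close to any geodesic joining their endpoints, and from this one deduces that the $(1,\varepsilon)$-quasigeodesic condition is equivalent to a \emph{local} condition on subwords of length bounded by a constant $N_0(\delta,\varepsilon)$. Concretely, it suffices to verify that $|d(w^{[i]}\pi, w^{[j]}\pi) - (j-i)| \le \varepsilon$ for all $i \le j$ with $j - i \le N_0$; the hyperbolicity of $G$ propagates this local bound to the global one. A deterministic finite automaton whose states record the last $N_0$ letters of the prefix read so far then recognises $L_\varepsilon$.

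For the fellow traveller condition I would apply Proposition \ref{prop_biaut} directly with $r = s = \varepsilon$. Given any pair $u,v \in L_\varepsilon$ whose starting points and endpoints both lie at distance at most one in $\Gamma_X(G)$, the proposition produces a constant $N = N(\varepsilon,\delta)$ with $d_X(u^{[n]}\pi, v^{[n]}\pi) \le N$ for every $n \in \mathbb{N}$. This covers both multiplier situations: in the right-multiplier case one has equal starting points and endpoints at distance at most one, while in the left-multiplier case, after the standard translation aligning the endpoints, one has equal endpoints and starting points at distance at most one; both instances fall within the hypotheses of Proposition \ref{prop_biaut}, so the uniform constant $N$ suffices for both left- and right-multiplier automata.

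The main obstacle is Step 2: regularity of $L_\varepsilon$ is the only step whose verification is not essentially formal, since it relies on the (standard but non-trivial) fact that quasigeodesic behaviour in hyperbolic groups is detectable locally. Once regularity is in hand, the remaining steps are a direct application of Proposition \ref{prop_biaut} together with the elementary observation that geodesic representatives of every group element already lie in $L_\varepsilon$.
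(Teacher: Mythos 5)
Your overall architecture matches the paper's: the two-sided fellow traveller property is obtained by applying Proposition \ref{prop_biaut} with $r=s=\varepsilon$, and the only substantive remaining ingredient is the regularity of the language $L_\varepsilon$ of $(1,\varepsilon)$-quasigeodesic words. The paper does not prove that regularity; it simply invokes \cite[Theorem 3.4]{Reg_QG_HYP}. The gap in your proposal lies precisely in the argument you sketch for this step. You claim that the $(1,\varepsilon)$-quasigeodesic condition is \emph{equivalent} to the same condition tested only on subwords of length at most $N_0(\delta,\varepsilon)$. The local-to-global principle for quasigeodesics in hyperbolic spaces does not give this: an $N_0$-local $(1,\varepsilon)$-quasigeodesic is only guaranteed to be a global $(\lambda',\varepsilon')$-quasigeodesic for \emph{worse} constants $\lambda'\geq 1$, $\varepsilon'\geq\varepsilon$, and the degradation is unavoidable. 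Concretely, already in a free group take $\varepsilon\geq 2$ and a word $w$ obtained by concatenating $k$ geodesic blocks of length $N_0$ with a single cancellation at each junction: every window of length at most $N_0$ meets at most one junction and so loses at most $2$, whence $w$ passes the local test, but globally $d(1,w\pi)\leq l(w)-2(k-1)$, which violates the lower bound in the $(1,\varepsilon)$ condition once $k$ is large. Hence the automaton you describe accepts a language strictly larger than $L_\varepsilon$, and Step 2 as written fails.

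The remaining parts of your argument are fine and coincide with the paper's: surjectivity of $L_\varepsilon\pi$ is immediate, and Proposition \ref{prop_biaut} applied with $r=s=\varepsilon$ covers both the left- and right-multiplier situations. To close the gap you should either cite the regularity of the language of $(1,\varepsilon)$-quasigeodesics in a hyperbolic group, as the paper does, or reproduce its actual proof, which rests on the bounded asynchronous fellow travelling of quasigeodesics with geodesics (so that a bounded amount of ``difference'' data between the word being read and a geodesic representative of the same prefix can be tracked by a finite automaton), not on a same-constants local-to-global equivalence.
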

	\begin{proof}
	 It follows from \ref{prop_biaut} and the fact that for any $\epsilon>0$, the language of (1,$\epsilon$)-quasigeodesics is regular (see \cite[Theorem 3.4]{Reg_QG_HYP}).
	\end{proof}

The next result is directly related to \cite[Theorem 3.20]{[CS24]}, which states that in a virtually free group, the set of fully-quasireduced geodesic representatives of conjugates of a rational subset can be captured by a context-free language. In our case, we obtain a regular language of fully reduced representatives for each conjugacy class intersecting a given rational subset that admits a regular language of geodesic representatives.

\begin{lemma}\label{lema_cyc_reg}
If $G$ is a hyperbolic group and $H$ quasiconvex subgroup, then $\CycGeo(\alpha(H))$ is regular.
\end{lemma}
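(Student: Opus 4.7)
My plan is to reduce, via Lemmas \ref{minepedro} and \ref{lema holt qr}, to a question about short conjugators between cyclic geodesics and a fixed regular language, and then to recognize that relation by a synchronous two-tape automaton built from the fellow-traveller property of Proposition \ref{prop_biaut}.

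First, I apply Lemma \ref{minepedro} with $\varepsilon := 3\delta+2R+3$ to obtain a regular language $L_K \subseteq \Geo(\alpha(K\pi))$ of fully $(1,\varepsilon)$-quasireduced words such that every conjugacy class meeting $K\pi$ contains the image of some $u \in L_K$. Let $L' := \Cyc(L_K)$; this is regular by the standard cyclic-permutation construction (if $\mathcal A = (Q, q_0, T, E)$ recognizes $L_K$, then $L' = \bigcup_{p \in Q} L_{p,T} L_{q_0,p}$, as in Proposition \ref{ConjGeo_livres1}), and it satisfies $\Cyc(L') = L'$. Because cyclic permutations of a word represent conjugate group elements, $L'\pi \subseteq \alpha(K\pi)$ and every conjugacy class meeting $K\pi$ is represented by some word of $L'$.

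Next, I apply Lemma \ref{lema holt qr} to any $w \in \CycGeo(\alpha(K\pi))$ together with any $u \in L_K$ with $w\pi \sim u\pi$: both words are fully $(1,\varepsilon)$-quasireduced, so the lemma yields a constant $B$ such that either $|w| \leq B$, or else there exist cyclic permutations $w'$ of $w$ and $u'$ of $u$ and a word $\beta$ of length at most $C := 2(\delta+\gamma)$ with $(\beta w'\beta^{-1})\pi = u'\pi$. Note that $w'$ is again a cyclic geodesic and $u' \in L'$. Writing $F$ for the (finite, hence regular) set of cyclic geodesics of length at most $B$ representing elements of $\alpha(K\pi)$, this yields the decomposition
\[
\CycGeo(\alpha(K\pi)) \;=\; F \,\cup\, \Cyc(M), \qquad M \;=\; \bigcup_{\beta \in \tilde X^{\leq C}} M_\beta,
\]
where $M_\beta = \{w' \in \CycGeo(G) \mid \exists\, u' \in L',\ \beta w' =_G u'\beta\}$. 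Since $\CycGeo(G)$ is regular in a hyperbolic group, $\Cyc$ preserves regularity, and the union over $\beta$ is finite, everything reduces to showing that each $M_\beta$ is regular.

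For fixed $\beta$, I would recognize $M_\beta$ as the first-tape projection of a synchronous two-tape finite automaton for the relation $R_\beta = \{(w',u') \in \CycGeo(G) \times L' \mid \beta w' =_G u'\beta\}$. Iterating Proposition \ref{prop_biaut} along a factorization of $\beta$ into generators gives that any two $(1,\varepsilon)$-quasigeodesics whose starting and ending points lie at distance at most $|\beta| \leq C$ synchronously $N$-fellow-travel, for a constant $N = N(C,\varepsilon,\delta)$. The states of the two-tape automaton therefore record the group element $\beta\cdot(w')^{[n]}\pi\cdot((u')^{[n]}\pi)^{-1}$, which lies in the ball of radius $N$ around the identity, together with the current states of fixed deterministic automata for $\CycGeo(G)$ and for $L'$. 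Projecting onto the $w'$-tape then yields regularity of $M_\beta$. The principal obstacle here is that $u'$ is only $(1,\varepsilon)$-quasigeodesic while $w'$ is a cyclic geodesic, so the lengths $\ell(w')$ and $\ell(u')$ can differ by a bounded constant (at most $2|\beta|+\varepsilon$); this is handled by padding the shorter tape with a fresh end-marker and absorbing the length gap into finitely many extra accepting states of the two-tape automaton.
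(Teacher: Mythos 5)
Your proof is correct and follows essentially the same route as the paper's: reduce via Lemma \ref{minepedro} and Lemma \ref{lema holt qr} to a finite set of short words together with cyclic permutations of cyclic geodesics conjugated by words of length at most $2(\delta+\gamma)$ into $\Cyc(L_K)$, and then establish regularity of the resulting conjugation relation by a bounded word-difference automaton. The only difference is that where you construct the synchronous two-tape automaton explicitly from the fellow-traveller property of Proposition \ref{prop_biaut}, the paper simply cites \cite[Lemma 8.1]{[GS91b]} for the regularity of the languages $L_1(z)$ and then projects.
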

\begin{proof}
	Let $X$ be a finite generating set for $G$, and  $\gamma$ be the boundedly asynchronous fellow travel constant satisfied by  $(1,3\delta+2R+3)$-quasigeodesics with respect to geodesics, where $R$ is the asynchronously fellow travel constant depending on $(1,3\delta+1,\delta)$.
	Let $L_H$ be the language from Lemma \ref{reg_hyp}.
	
	For each $z\in \widetilde X^*$, put 
	$$L_1(z)=\{(u,v)\mid u,v\in (1,\varepsilon)-\QG(G):v\pi=(z^{-1}u z)\pi\},$$
	$$L_2(z)=\{v\in (1,\varepsilon)-\QG(G)\mid \exists u\in \Cyc(L_H):(u,v)\in L_1(z)\},$$
	
	where $\varepsilon=3\delta+2R+3$, and let $S$ be the (finite) language of cyclic geodesics of length at most $(11\delta+2\gamma+2R+4)$ representing conjugates of some element in $H$.

	We claim that 
	\begin{align}\label{formula1}
		\CycGeo(\alpha(H))=S\cup \left(\CycGeo(G)\cap \left( \bigcup_{|z|\leq 2(\delta +\gamma)} \Cyc (L_2(z))\right)\right).
	\end{align}
	Let $w\in \CycGeo(\alpha(H))$. If $\ell(w)\leq (11\delta+2\gamma+2R+4)$, then $w\in S$. Assume then that  $\ell(w)> (11\delta+2\gamma+2R+4)$. Obviously, $w\in \CycGeo(G)$. Moreover, since $w\pi\in \alpha(H)$, then there is an element $g\in H$ such that $w\pi\sim g$. By definition of $L_H$, there is a  fully $(1,\varepsilon)$-quasireduced word $u\in L_H$ such that $u\pi\sim g\sim w\pi$. Since cyclic geodesics are in particular  fully $(1,\varepsilon)$-quasireduced, by Lemma \ref{lema holt qr},  we know that there exist cyclic conjugates $u'$ and $w'$ of $u$ and $w$ and a word $z$ with $(z u'z^{-1})\pi=w'\pi$ and $|z|\leq 2(\delta+\gamma)$, that is, for such $z$, $(u',w')\in L_1(z)$, and thus, $w'\in L_2(z)$, which implies that $w\in \Cyc (L_2(z))$.
	Therefore, $\CycGeo(\alpha(H))\subseteq S\cup \left(\CycGeo(G)\cap \left( \bigcup_{|z|\leq 2(\delta +\gamma)} \Cyc (L_2(z))\right)\right).$

	Now, let $w\in S\cup \left(\CycGeo(G)\cap \left( \bigcup_{|z|\leq 2(\delta +\gamma)} \Cyc (L_2(z))\right)\right).$ If $w\in S$, then $w$ is, by definition of $S$,  a cyclic geodesic representing some element in $\alpha(H)$, that is $w\in \CycGeo(\alpha(H))$. If $w\in \left(\CycGeo(G)\cap \left( \bigcup_{|z|\leq 2(\delta +\gamma)}\Cyc (L_2(z))\right)\right)$, then, to show that $w\in  \CycGeo(\alpha(H))$, it suffices to prove that  $L_2(z)\pi\subseteq \alpha(H),$ 
	for all $z\in \widetilde X^*$ such that $|z|\leq 2(\delta +\gamma)$ 
	(recall that for any $g\in G$, any cyclic permutation of $g$ is in $[g]$). If $w\in L_2(z)$ for such a $z$, then there is $u\in \Cyc(L_H)$ such that $(u,w)\in L_1(z),$ that is, $w\pi=(z^{-1}u z)\pi$, so $w\pi\sim u\pi\in \alpha(H)$, as $\Cyc(L_H)\pi\subseteq \alpha(H)$.
	So, we proved (\ref{formula1}). 
	
	We have that $\CycGeo(G)$ is regular (see \cite{[CHHR16]}) and $S$ is finite, thus regular, so it suffices to show that $\Cyc L_2(z)$ is regular for all $z$ such that $|z|\leq 2(\delta +\gamma)$. By \cite[Lemma 8.1]{[GS91b]}, it follows that $L_1(z)$ is regular, and so is $L_1(z) \cap (\Cyc(L_H) \times \widetilde X^*)$, its projection to the second component, $L_2(z)$ and consequently $\Cyc(L_2(z)) $.
	\end{proof}

\begin{theorem}\label{ConjGeo_hyperb}
If $G$ is a hyperbolic group and $H$ is a quasiconvex subgroup of $G$, then $\ConjGeo(H)$ and $\ConjMinLenSL(H)$ are regular.
\end{theorem}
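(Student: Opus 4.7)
The plan is to derive both regularity statements from the previously established Lemma \ref{lema_cyc_reg}, combined with the fact that for a $\delta$-hyperbolic group the languages $\ConjGeo(G)$ and $\ConjMinLenSL(G)$ are regular by \cite[Theorem 3.1]{[CHHR16]}. My key move will be to rewrite
\begin{align*}
\ConjGeo(L\pi)&=\ConjGeo(G)\cap \CycGeo(\alpha(L\pi)),\\
\ConjMinLenSL(L\pi)&=\ConjMinLenSL(G)\cap \CycGeo(\alpha(L\pi)),
\end{align*}
which, once established, gives the result immediately by closure of regular languages under intersection, since the right-hand factors are regular by Lemma \ref{lema_cyc_reg}.

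The technical point I would check first is that $\ConjGeo(G)\subseteq \CycGeo(G)$, and hence also $\ConjMinLenSL(G)\subseteq \CycGeo(G)$. If $w\in \ConjGeo(G)$, then $\ell(w)=|w\pi|_c$; for any cyclic permutation $w'$ of $w$ the element $w'\pi$ is conjugate to $w\pi$, so $|w'\pi|\geq |w\pi|_c=\ell(w)=\ell(w')\geq |w'\pi|$, forcing equality and thus showing that $w'$ is a geodesic. The same chain of inequalities applies to a shortlex normal form $w_g$ satisfying $|g|=|g|_c$, yielding the analogous inclusion for $\ConjMinLenSL(G)$.

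With this observation in hand, the displayed equalities are straightforward. Unfolding the definitions gives $\ConjGeo(L\pi)=\ConjGeo(G)\cap \alpha(L\pi)\pi^{-1}$ and $\ConjMinLenSL(L\pi)=\ConjMinLenSL(G)\cap \alpha(L\pi)\pi^{-1}$. On words that are already cyclic geodesics, the condition $w\pi\in\alpha(L\pi)$ is equivalent to $w\in\CycGeo(\alpha(L\pi))$, which, combined with the two inclusions above, yields the two identities.

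I do not anticipate any real obstacle in carrying this out: the substantive content of the theorem has been packaged into Lemma \ref{lema_cyc_reg}, whose proof used the asynchronous fellow-travel property together with the bound from Lemma \ref{lema holt qr} to bootstrap from the language $L_K$ of Lemma \ref{minepedro} to a regular language of cyclic geodesics for $\alpha(L\pi)$. Once that lemma is available, the present theorem is essentially a bookkeeping step exploiting that conjugacy-minimal representatives are automatically cyclic geodesics.
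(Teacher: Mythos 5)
Your proof is correct, and it reaches the conclusion by a cleaner assembly than the paper's, although both arguments rest on the same key ingredient, namely Lemma \ref{lema_cyc_reg} giving regularity of $\CycGeo(\alpha(L\pi))$. The paper re-opens the proof of \cite[Theorem 3.1]{[CHHR16]}: it recalls the explicit decomposition of $\ConjGeo(G)$ as a finite set union a set difference involving $\CycGeo(G)$, a length restriction, and the languages $\Cyc\bigl(\bigcup_{|\alpha|\le 2\delta+1}L(\alpha)\bigr)$, then intersects each piece with $\alpha(L\pi)\pi^{-1}$ and identifies $\CycGeo(G)\cap\alpha(L\pi)\pi^{-1}$ with $\CycGeo(\alpha(L\pi))$. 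You instead use \cite[Theorem 3.1]{[CHHR16]} purely as a black box (regularity of $\ConjGeo(G)$ and $\ConjMinLenSL(G)$) and observe the inclusion $\ConjGeo(G)\subseteq\CycGeo(G)$, which lets you write $\ConjGeo(L\pi)=\ConjGeo(G)\cap\CycGeo(\alpha(L\pi))$ directly; your verification of that inclusion via $\ell(w')\ge|w'\pi|\ge|w\pi|_c=\ell(w)=\ell(w')$ is sound, and the identification $\CycGeo(\alpha(L\pi))=\CycGeo(G)\cap\alpha(L\pi)\pi^{-1}$ follows from the definitions. Your treatment of $\ConjMinLenSL(L\pi)$ as $\ConjMinLenSL(G)\cap\CycGeo(\alpha(L\pi))$ also differs mildly from the paper's $\ConjGeo(L\pi)\cap\SL(G)$; both are valid. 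What your route buys is independence from the internal structure of the \cite{[CHHR16]} argument; what the paper's route buys is an explicit description of $\ConjGeo(L\pi)$ mirroring the original decomposition, which may be useful for effective constructions.
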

\begin{proof}
	
	We will adapt the proof from \cite[Theorem 3.1]{[CHHR16]}. Since $\ConjMinLenSL(H)=\ConjGeo(H)\cap \SL(G)$ and $\SL(G)$ is regular, then we only prove the result for $\ConjGeo(H)$.
	
	Let $S$ be the (finite, thus regular) set of words belonging to $\ConjGeo(H)$ of length at most $8\delta +1$. In \cite[Theorem 3.1]{[CHHR16]} we have
	$$\ConjGeo(G)= S \cup\left[(\CycGeo(G) \cap (\bigcup_{k\geq 8\delta +1}X^k))\setminus \left(\Cyc\left(\bigcup_{|\alpha|\leq 2\delta+1} L(\alpha)\right)\right)\right],$$
	and so we get
	\begin{equation*}
		\begin{aligned}
			&\ConjGeo(H)= \ConjGeo(G)\cap \alpha(H)\pi^{-1} \\
			&= [S\cap \alpha(H)\pi^{-1}] \cup \left[(\CycGeo(G) \cap \alpha(H)\pi^{-1} \cap (\bigcup_{k\geq 8\delta +1}X^k))\setminus \left(\Cyc\left(\bigcup_{|\alpha|\leq 2\delta+1} L(\alpha)\right)\right)\right] \\
			&=S \cup\left[(\CycGeo( \alpha(H)) \cap (\bigcup_{k\geq 8\delta +1}X^k))\setminus \left(\Cyc\left(\bigcup_{|\alpha|\leq 2\delta+1} L(\alpha)\right)\right)\right],
		\end{aligned}
	\end{equation*}
	
	where
	$${L(\alpha)=\{v'\in \CycGeo(G)\mid \exists u'\in \CycGeo \text{ such that } \alpha^{-1}u'\alpha=_G v',\ l(v')>l(u')\}.}$$
	We now have the following. From Lemma \ref{bridsonhaefliger}, $\CycGeo(\alpha(H))$ is regular, and so $\CycGeo( \alpha(H)) \cap (\bigcup_{k\geq 8\delta +1}X^k)$ is a regular language. Also, by \cite[Theorem 3.1]{[CHHR16]}, $\bigcup_{|\alpha|\leq 2\delta+1} L(\alpha)$ is regular, which implies that $\Cyc\left(\bigcup_{|\alpha|\leq 2\delta+1} L(\alpha)\right)$ is a regular language.
	
	Hence, $\ConjGeo(H)$ and $\ConjMinLenSL(H)$ are regular languages.
	\end{proof}

Using Lemma \ref{minepedro}, and the fact that, in this setting of virtually free groups, all rational subsets are image of a language of geodesics (\cite[Corollary 3.13]{[CS24]}), by applying the results proved for the hyperbolic case, we can show that for any rational subset $U$, both 
$\ConjGeo(U)$ and $\ConjMinLenSL(U)$ are regular.
So, using Corollary \ref{corolario decidibilidade}, we get another language-theoretic proof of the generalized conjugacy problem for virtually free groups.

 \begin{corollary}
	The generalized conjugacy problem with respect to rational subsets is decidable for virtually free groups.
	\end{corollary}

\section{Virtually cyclic groups}
We now turn to the case of virtually cyclic groups. As already observed in Section \ref{chp_hyp}, for such groups, since, by \cite{[CS24]}, every rational subset is the image of a regular language of geodesics, and consequently, the languages $\ConjGeo(U)$ and $\ConjMinLenSL(U)$ are regular for any rational subset $U$.
 We show here that this regularity extends further: for all generating sets of a virtually cyclic group G, and for all such rational subsets $U \subseteq G$, the language $\ConjSL(U)$ is regular. This strengthens the known result by moving beyond geodesic representatives and confirming regularity at the level of shortlex minimal representatives.

\begin{theorem}\label{thm_virt_cyc}
		Let $G$ be a virtually cyclic group. Then for all rational subsets $U$ of $G$, the language $\ConjSL(U)$ is regular.
\end{theorem}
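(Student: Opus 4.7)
The plan is to exploit the identity $\ConjSL(U)=\ConjSL(G)\cap\alpha(U)\pi^{-1}$. Since $\ConjSL(G)$ is regular for any generating set of a virtually cyclic group by \cite[Theorem 4.2]{[CHHR16]}, it suffices to show that the intersection on the right is regular. The case of a finite group $G$ is immediate, as Anisimov's theorem yields that $S\pi^{-1}$ is regular for any $S\subseteq G$; so the essential task is the infinite case.

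In the infinite case, I would fix a characteristic infinite cyclic normal subgroup $N=\langle t\rangle$ of finite index in $G$, with transversal $g_1=1,g_2,\ldots,g_n$, so that every element of $G$ has a unique expression $t^k g_i$. The conjugation action on $N$ factors through $\Aut(\Z)=\{\pm 1\}$, while the induced action on $G/N$ is by inner automorphisms of the finite quotient. Consequently, the conjugacy classes meeting $Ng_i$ (via conjugators fixing that coset) are parameterized by $\Z/d_i\Z$ for some $d_i\geq 0$ (with $d_i=0$ meaning each $t^k g_i$ is its own class), together with finite $G/N$-orbit data recording which cosets are identified by conjugation. Decomposing $U=\bigsqcup_i U_i$ with $U_i=U\cap Ng_i$, each $U_i$ corresponds under $t^k g_i\leftrightarrow k$ to a rational, hence semilinear, subset $V_i\subseteq\Z$, and the set of conjugacy classes meeting $U$ is characterized by semilinear conditions modulo the $d_i$'s combined with the finite-orbit data on $G/N$.

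The key observation is that, for $w\in\ConjSL(G)$, all the information about the conjugacy class of $w\pi$ needed to decide whether it meets $U$ depends only on finitely many bits of data: the coset $w\pi N\in G/N$ and the class of $w\pi$ modulo a suitable power of $t$ (chosen compatibly with the $d_i$'s and the periods of the $V_i$'s). Both are finite-state quantities and can be tracked by an automaton as it reads $w$ letter by letter, since right multiplication by each $x\in\tilde X$ induces a fixed transformation on these coordinates. Augmenting the automaton for $\ConjSL(G)$ with these finite-state tracking components via a standard product construction yields a finite-state automaton that accepts exactly those $w\in\ConjSL(G)$ with $w\pi\in\alpha(U)$.

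The main obstacle is the bookkeeping involved in describing exactly which conjugacy classes meet a given rational $U_i$, given the interplay of the $\pm 1$-action on $N$ and the possible nontrivial action on $G/N$, and the need to convert the semilinear conditions on the $V_i$'s into uniform finite-state checks compatible with the shortlex normal form. The argument must be made to go through for all generating sets of $G$, matching the generality of the cited theorem for $U=G$.
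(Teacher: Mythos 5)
Your opening identity $\ConjSL(U)=\ConjSL(G)\cap\alpha(U)\pi^{-1}$ is correct, and your overall route — showing that membership of $w\pi$ in $\alpha(U)$ is a finite-state condition checkable while reading a geodesic — is genuinely different from the paper's. The paper instead splits along the centralizer $C$ of the infinite cyclic normal subgroup, uses that $G\setminus C$ carries only finitely many conjugacy classes, writes $\ConjSL(U)\cap C\pi^{-1}=\bigl[\CycGeo(\alpha(U))\setminus V\bigr]\cap C\pi^{-1}$ with $V$ built from the regular conjugacy-multiplier languages $L_1(t)$, and imports the regularity of $\CycGeo(\alpha(U))$ from the hyperbolic-group machinery of Lemma \ref{lema_cyc_reg} together with the fact that every rational subset of a virtually free group is the image of a regular language of geodesics.

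However, your central claim contains a genuine gap. You assert that whether $[w\pi]$ meets $U$ depends only on the coset $w\pi N$ and on $w\pi$ "modulo a suitable power of $t$." This is false as stated: the sets $V_i\subseteq\Z$ are semilinear, hence only \emph{eventually} periodic, and when $g_i$ centralizes $t$ the conjugacy class of $t^kg_i$ is finite, so the set of $k$ with $[t^kg_i]\cap U\neq\emptyset$ is again only eventually periodic (already for $G=\Z=\langle t\rangle$ and $U=\{t^5\}$ no congruence class of $k$ determines membership; note also that the identifications $k\mapsto \varepsilon_jk+c_{ij}$ include reflections, so the classes are not parameterized by $\Z/d_i\Z$ in general). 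Consequently the "fixed transformation on these coordinates" tracked by your product automaton does not suffice to recognize $\alpha(U)$. The gap is repairable — since $N$ is quasi-isometrically embedded, elements $t^kg_i$ with $|k|$ below the periodicity threshold have uniformly bounded geodesic length, so the corresponding exceptional part of $\ConjSL(U)$ is a finite language that can be adjoined separately — but this step, together with the "bookkeeping" you explicitly defer (determining exactly which classes meet $U$, i.e., in effect proving that $\alpha(U)$ is rational and that $\Geo(\alpha(U))$ is regular), is the actual mathematical content of the theorem; as written the proposal is an outline rather than a proof.
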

\begin{proof}
We can assume that $G$ is infinite. Let $H\trianglelefteq G$ of finite index such that $H\cong \mathbb{Z}$. It follows from \cite[Proposition 4.2]{[CHHR16]}, by defining $C$ as the centralizer of $H$ in $G$, that $G\setminus C$ has a finite number of conjugacy classes. Then we have
$$
\ConjSL(U)=(\ConjSL(U)\cap C\pi^{-1}) \cup (\ConjSL(U)\cap (G\setminus C)\pi^{-1}),
$$
where the second term of the union is immediately regular (since is finite).
To show that $\ConjSL(U)\cap C\pi^{-1}$ is regular we start by defining 
$$
V:= \bigcup_{t\in T}(\{u\in \Geo(G)\mid \exists v\in \Geo(G) \text{ s.t. } (u,v)\in L_1(t), v\le_{sl} u\}),
$$
where $L_1(t)$ is as defined by
$$
L_1(t):=\{(u,v)\mid u,v\in \Geo(G),\ v=_G t^{-1}ut\},
$$
which is regular by \cite[Lemma 8.1]{[GS91b]}. By \cite[Proposition 4.2]{[CHHR16]} we have
$$
\ConjSL(G)\cap C\pi^{-1}=[\Geo(G)\setminus V]\cap C\pi^{-1},
$$
and $C\pi^{-1}$ is regular.

Further, we know that $\ConjSL(U)\subseteq \CycGeo(\alpha(U))$, which implies that
\begin{equation*}
	\begin{aligned}
		\ConjSL(U)\cap C\pi^{-1}&=\CycGeo(\alpha(U)) \cap \ConjSL(U)\cap  C\pi^{-1}\\
		&= \CycGeo(\alpha(U)) \cap [\Geo(G)\setminus V]\cap C\pi^{-1} \\
		&= [(\CycGeo(\alpha(U)) \cap \Geo(G))\setminus V]\cap C\pi^{-1} \\
		&= [\CycGeo(\alpha(U))\setminus V]\cap C\pi^{-1}.
	\end{aligned}
\end{equation*}

Now, notice that, as all virtually cyclic groups are virtually free, and that, by \cite[Corollary 3.13]{[CS24]}, $\Geo(U)$ is a regular language, and so, the rational subset $U$ is the imagem of a regular language of geodesics. Then, since all virtually cyclic groups are hyperbolic, by Lemma \ref{lema_cyc_reg}, $\CycGeo(\alpha(U))$ is a regular language and so the same is verified for $\ConjSL(U)\cap C\pi^{-1}$. Consequently we immediately conclude that $\ConjSL(U)$ is regular.
\end{proof}
\section{Virtually abelian groups}\label{virt_abelian}

Building on the results obtained for free groups, hyperbolic groups, and virtually cyclic groups, we now turn our attention to the class of virtually abelian groups. In \cite{[HHR07]} it was shown that for any virtually abelian group $G$, there exists a generating set such that the language of all geodesics is piecewise testable. Later, in \cite[Proposition 3.3]{[CHHR16]}, it was proved that for such a generating set, $\ConjGeo(G)$ is also piecewise testable and $\ConjMinLenSL(G)$ is regular. However, when we move to subsets, the language of geodesics of a rational subset does not even need to be context-free.
\begin{example}
	Consider the free abelian group $G$ generated by $\{a,b,c\}$. Let $K=(abc)^*$, which is a regular language.
	
	We have that $\ConjGeo(K\pi)=\Geo(K\pi)=\text{Perm}((abc)^*)$ and
	$$
	\text{Perm}((abc)^*) \cap a^*b^*c^*=\{a^nb^nc^n\mid n\geq 0\},
	$$
	which is not regular (in fact, it is not even context-free), and so, since context-free languages are closed under intersection with regular languages, we deduce that $\ConjGeo(K\pi)=\Geo(K\pi)$ is not context-free.
\end{example}

Thus, in the case of virtually abelian groups, our approach must be different, and so we consider two different scenarios.  
In the first, we consider the case where $U$ is a subset contained in a finite index abelian subgroup of $G$, and we analyze this case under the assumption that $\Geo(U)$ belongs to a fixed class of languages.  
In the second, we assume that $\mc C$ is a full semi-AFL and study the language $\ConjGeo(U)$ for subsets $U \in \mc C^{\forall}(G)$.

In what follows, in order to simplify notation, for any abelian group $G$ generated by a finite set $X$, any automorphism $\phi$ in $G$ and any $w=x_1\cdots x_n \in X^*$, we will write $\phi(w)=\phi(x_1)\cdots \phi(x_n),$ and so, for any language $L\subseteq X^*$, we will write $\phi(L)$ to represent $\cup_{w\in L}\phi(w)$. Notice that $\phi(X)$ also generates $G$.

\begin{lemma}\label{lema_autom}
	Let $G$ be an abelian group, $U$ a subset of $G$ and $\phi$ an automorphism in $G$. Then $\phi(\Geo_X(U))= \Geo_{\phi(X)}(\phi(U))$ and consequently, for any class of languages $\mc C$ we have $\Geo_X(U)\in \mc C $ if and only if $\Geo_{\phi(X)}(\phi(U))\in \mc C$.
\end{lemma}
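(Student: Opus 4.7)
The proof is essentially a routine verification that $\phi$ induces a length-preserving bijective renaming of the alphabet. The plan is as follows.

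First, I would observe that, since $\phi$ is an automorphism of $G$ and $X$ is finite, the restriction $\phi|_X \colon X \to \phi(X)$ is a bijection; extended by $\phi(x^{-1}) := \phi(x)^{-1}$, it gives a bijection $\tilde{X} \to \widetilde{\phi(X)}$ and hence a length-preserving monoid isomorphism $\Phi \colon \tilde{X}^* \to \widetilde{\phi(X)}^*$ (which is precisely the letter-by-letter extension the paper denotes by $\phi$). Since $\phi$ is a group homomorphism, we have $\pi_{\phi(X)} \circ \Phi = \phi \circ \pi_X$, i.e.\ $\Phi(w)$ represents $\phi(w\pi_X)$ with respect to the new generating set $\phi(X)$.

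Next, I would establish the length identity $|g|_X = |\phi(g)|_{\phi(X)}$ for every $g \in G$: the bijection $\Phi$ sends the set of $X$-representatives of $g$ bijectively onto the set of $\phi(X)$-representatives of $\phi(g)$, so the minimal lengths coincide. It follows that $w \in \tilde X^*$ is a geodesic representing some $g \in U$ if and only if $\Phi(w)$ is a word of the same length over $\widetilde{\phi(X)}$ representing $\phi(g) \in \phi(U)$, and that this latter word is itself a geodesic (since $|\Phi(w)| = |w| = |g|_X = |\phi(g)|_{\phi(X)}$). This yields the set equality $\phi(\Geo_X(U)) = \Geo_{\phi(X)}(\phi(U))$.

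For the "consequently" part, the map $\Phi$ is a bijective monoid morphism coming from an alphabet bijection; both $\Phi$ and $\Phi^{-1}$ are monoid morphisms in the sense of formal language theory, so any class $\mc C$ closed under such relabellings (in particular, any cone, and thus every class considered later in the paper) satisfies $L \in \mc C \iff \Phi(L) \in \mc C$. Applying this to $L = \Geo_X(U)$ gives the stated equivalence.

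I do not expect any real obstacle; the only point requiring care is to keep track of which generating set defines the word metric when asserting that lengths are preserved, which is exactly what the length identity above records.
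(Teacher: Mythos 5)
Your proposal is correct and follows essentially the same route as the paper: the paper establishes $\phi(\Geo_X(U))\subseteq \Geo_{\phi(X)}(\phi(U))$ by pulling a hypothetical shorter $\phi(X)$-representative back through $\phi^{-1}$ (a contradiction argument that is just your length identity $|g|_X=|\phi(g)|_{\phi(X)}$ in disguise), and dismisses the ``consequently'' part by saying the automorphism merely renames generators. Your version is if anything slightly more careful, since you make explicit that the final equivalence requires $\mc C$ to be closed under bijective relabellings of the alphabet, a hypothesis the paper leaves implicit in its blanket ``any class of languages.''
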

\begin{proof}
	Consider $X$ a generating set of $G$ and $x_1,\cdots x_n\in X$ such that $w=x_1\cdots x_n\in \Geo_X(U)$. We have $\phi(w)=\phi(x_1)\cdots \phi(x_n)$. If $\phi(w)\notin \Geo_{\phi(X)}(\phi(U))$, then there exists $y_1,\cdots, y_m\in \phi(X)$ such that $m<n$ and $\phi(w)=_G y_1\cdots y_m$. Then we would have $w=\phi^{-1}\phi(w)=_G\phi^{-1}(y_1)\cdots \phi^{-1}(y_m)$, which contradicts the fact that $w$ is a geodesic. Hence $\phi(\Geo_X(U))\subseteq \Geo_{\phi(X)}(\phi(U))$. The other inclusion is analogous.
	
	For the final equivalence, observe that these automorphisms merely rename the generators while preserving the properties of the languages.
\end{proof}

\begin{theorem}\label{thm_va}
	Let $G$ be a virtually abelian group, $N$ be an abelian finite index normal subgroup of $G $, $U$ be a subset of $N$ and $\mc C$ be a class of languages closed for finite unions and the intersection with regular languages. Then there exists a finite generating set $Z$ of $G$ such that if  $\Geo_Z(U)\in \mc C$, then $\ConjGeo_Z(U)$ and $\ConjMinLenSL_Z(U)$ are both languages in $\mc C$. 
\end{theorem}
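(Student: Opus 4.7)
The plan is to reduce the statement to a claim about finitely many automorphic images of $U$ inside $N$, and then combine these via the two closure properties of $\mc C$ together with the regularity results already known for the ambient group $G$.

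First I would decompose $\alpha(U)$. Let $g_1=1,g_2,\dots,g_k$ be right coset representatives of $N$ in $G$, and let $\phi_i\colon N\to N$ be conjugation by $g_i$, i.e.\ $\phi_i(n)=g_i^{-1}ng_i$. Since $N$ is normal each $\phi_i$ is an automorphism of $N$, and since $N$ is abelian, for any $u\in U\subseteq N$ and any $g=ng_i$ with $n\in N$ we compute
\[
g^{-1}ug \;=\; g_i^{-1}n^{-1}un\,g_i \;=\; g_i^{-1}u g_i \;=\; \phi_i(u).
\]
Hence $\alpha(U)=\bigcup_{i=1}^{k}\phi_i(U)$, a finite union. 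Next, I would choose $Z$ following the Neumann--Shapiro construction, as refined in \cite[Proposition 3.3]{[CHHR16]}, so that $\ConjGeo_Z(G)$ is piecewise testable (in particular regular) and $\SL_Z(G)$ is regular. Enlarging $Z$ if necessary, I also arrange that $\tilde Z\cap N$ generates $N$ and is invariant as a set under each $\phi_i$ (closing any symmetric generating set of $N$ under the finite collection $\{\phi_1,\dots,\phi_k\}$).

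With these preparations, since $\ConjGeo_Z(G)\subseteq\Geo_Z(G)$, the decomposition yields
\[
\ConjGeo_Z(U) \;=\; \ConjGeo_Z(G)\,\cap\,\bigcup_{i=1}^{k}\Geo_Z\bigl(\phi_i(U)\bigr),
\]
so by closure of $\mc C$ under finite unions and intersection with the regular language $\ConjGeo_Z(G)$, it suffices to prove that $\Geo_Z(\phi_i(U))\in\mc C$ for each $i$. The conclusion for $\ConjMinLenSL_Z(U)=\ConjGeo_Z(U)\cap\SL_Z(G)$ then follows by a further intersection with the regular language $\SL_Z(G)$.

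The main obstacle is the implication $\Geo_Z(U)\in\mc C\,\Rightarrow\,\Geo_Z(\phi_i(U))\in\mc C$. I would attack this by applying Lemma \ref{lema_autom} to the abelian group $N$ with the $\phi_i$-invariant generating set $\tilde Z\cap N$: it gives $\phi_i\bigl(\Geo_{\tilde Z\cap N}(U)\bigr)=\Geo_{\tilde Z\cap N}(\phi_i(U))$, realized by a length-preserving letter permutation of the invariant alphabet. The subtle point is to lift from $\Geo_{\tilde Z\cap N}$ to $\Geo_Z$, i.e.\ to handle those geodesics of $N$-elements in $\tilde Z^*$ that use letters outside $\tilde Z\cap N$. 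Here I would exploit two features of the chosen generating set: $N\pi^{-1}$ is regular (because $G/N$ is finite, so membership in $N$ is detected by a regular projection to $\tilde Z^*\twoheadrightarrow (G/N)^*$), and geodesics for $N$-elements split into finitely many regular coset-trajectory patterns that are preserved and merely permuted internally by the $\phi_i$-action. The hardest technical issue is to implement this permutation while staying inside $\mc C$ using only finite unions and intersection with regular languages, avoiding any appeal to closure under morphisms or inverse morphisms that $\mc C$ is not assumed to enjoy.
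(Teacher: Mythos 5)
Your overall architecture matches the paper's: the decomposition $\alpha(U)=\bigcup_i\phi_i(U)$ using that $N$ is abelian and normal, the choice of a Neumann--Shapiro-type generating set, the identity $\ConjGeo_Z(U)=\ConjGeo_Z(G)\cap\bigcup_i\Geo_Z(\phi_i(U))$, the appeal to Lemma \ref{lema_autom}, and the final intersection with $\SL_Z(G)$ are all exactly what the paper does. But you have correctly located, and then not closed, the one step that carries the real content: passing between $\Geo_Z(W)$ and $\Geo_{\tilde Z\cap N}(W)$ for $W\subseteq N$. Your proposed fix --- splitting $Z$-geodesics of $N$-elements into ``regular coset-trajectory patterns'' and permuting them internally --- is a genuine gap: as you yourself note, implementing such a permutation on words that interleave letters of $\tilde Z\cap N$ with coset letters would need a transduction or morphism, and $\mc C$ is only assumed closed under finite unions and intersection with regular languages. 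Without resolving this, you cannot apply Lemma \ref{lema_autom} (which lives entirely over the invariant alphabet $\tilde Z\cap N$) to the hypothesis $\Geo_Z(U)\in\mc C$, nor transfer its conclusion back to $\Geo_Z(\phi_i(U))$.

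The paper's resolution is that, for the specific generating set $Z=X\cup Y$ of \cite[Proposition 3.3]{[CHHR16]}, the problem you are trying to engineer around simply does not occur: a $Z$-geodesic representing an element of $N$ contains no letter of $Y$ at all, so $\Geo_Z(W)=\Geo_X(W)\subseteq X^*$ for every $W\subseteq N$. The two properties of $Z$ that give this are (i) $X$ is closed under conjugation by elements of $G$, so in any word the $Y$-letters can be pushed to the end without changing length or the element represented, and (ii) any product of at most three $Y$-letters that lies in $N$ is already a single letter of $X$. Since geodesics over $Z$ contain at most two $Y$-letters, a geodesic for an element of $N$ with a $Y$-letter would, after shuffling, end in a nonempty $Y$-suffix lying in $N$, which by (ii) collapses to one $X$-letter and contradicts geodesy. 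Once this is in place, the hypothesis $\Geo_Z(U)\in\mc C$ becomes $\Geo_X(U)\in\mc C$, Lemma \ref{lema_autom} applies directly with $\phi_i(X)=X$, and $\Geo_Z(\alpha(U))=\bigcup_i\Geo_X(\phi_i(U))\in\mc C$ by closure under finite unions; the only intersections with regular languages needed are with $X^*$ (to extract $\ConjGeo_Z(U)$ from $\Geo_Z(\alpha(U))$, using $\ConjGeo_Z(U)=\Geo_Z(\alpha(U))\cap X^*$) and with $\SL_Z(G)$. So the missing idea is not a lifting device but the verification, from properties (i) and (ii) of the constructed generating set, that there is nothing to lift.
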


\begin{proof}
	Let $A$ be a finite generating set of $N$.
	
	We build the generating set $Z$ of $G$ as in  \cite[Proposition 3.3]{[CHHR16]}. Let $T\cup\{1\}$ be a right transversal of $N$ in $G$, $Y:=T^{\pm 1}$ and $X':=\{x\in N\mid \exists w\in Y^*\ l(w)\leq 4,\ x=_Gw\neq_G 1 \}$. Define $X$ as the closure under inversion and conjugation of the set $A\cup X'$ and $Z:=X\cup Y$.
	We have
	\begin{enumerate}
		\item X$\subseteq$ N,\ Y$\subseteq$ G$\setminus$ N
		\item $X,\ Y$ closed under inversion
		\item $X$ closed under conjugation by elements of $G$
		\item $Y$ contains at least one representative of each nontrivial coset of $N$ in $G$
		\item If $w=_G xy$ with $w\in Y^*,\ l(w)\leq 3$, $x\in N$ and $y\in Y\cup \varepsilon$, then $x\in X$.
	\end{enumerate}
	Regarding property $3$, notice that being closed for conjugation by elements of $G$ is equivalent to saying that $X$ is closed for conjugation by elements of $T$, since for any $g\in G$, we have that $g=nt$ for some $n\in N$ and $t\in T$ and so, for any $x\in X$, since $X\pi\subseteq N$ and $N$ is abelian, $x^g=t^{-1}n^{-1}xnt= x^t$. 
	
	Consider $L:= Geo_Z(\alpha(U))$ and $\tilde{L}:=\ConjGeo_Z(U)$.
	By \cite[Proposition 3.3]{[CHHR16]}, we have that  $L\subseteq X^*\cup X^*YX^*\cup X^*YX^*YX^*.$ 
	Since $U\subseteq N$, we claim that $\tilde{L}\subseteq X^*$.
	
	Notice that it is easy to see that $(X^*YX^*)\pi\ \cap N=\emptyset$ since if $(x_1yx_2)\pi\in \tilde{L}\pi\cap  (X^*YX^*)\pi \cap N$, then since $X^*\pi\subseteq N$, we would get $y\pi\in N$. Similarly, if $x_1y_1x_2y_2x_3 \in \tilde{L}\pi \cap (X^*YX^*YX^*)\pi \cap N$, then,  by property $3,$ all elements of $\Geo(G)$ are equal (in $G$) to a word with the same length and the same elements of $Y$ but all of them appearing at the end of the word, that is,  there is some $\tilde x\in X^*$ such that  $x_1y_1x_2y_2x_3 =_G \tilde{x}y_1y_2$ and $l(x_1y_1x_2y_2x_3)=l(\tilde{x}y_1y_2)$. Now, we have $(\tilde{x}y_1y_2)\pi \in \tilde{L}\pi\cap (X^*YX^*YX^*)\pi \cap N$ which implies that $(y_1y_2)\pi\in N$. By property $5$, since we have $y_1y_2=x$, for some $x\in N$ and $l(y_1y_2)=2<3$, we get $x\in X$, which contradicts the fact that $\tilde{x}y_1y_2$, and so $x_1y_1x_2y_2x_3$ is a  geodesic. Hence $\tilde{L}\subseteq X^*$.
	Notice that we just proved the inclusion $\tilde{L}\subseteq L\cap X^*$. It follows from \cite[Proposition 3.3]{[CHHR16]} that the other inclusion also holds, and so $\tilde{L}= L\cap X^*$.

	Now, we prove that $\Geo_Z(\alpha(U))\in \mc C$ (to conclude that $L\cap X^*=\tilde{L}\in \mc C$). Notice that each $a_i\in T\cup\{1\}$ defines an automorphism on $N$
	\begin{center}
		\begin{align*}
			\phi_{a_i}: N&\to N \\
			m&\mapsto m^{a_i}.
		\end{align*}
	\end{center}
	Also, since $U$ is contained in the abelian subgroup $N$, we have that
	$$
	\alpha(U)=\bigcup_{a_i\in T\cup \{1\}} U^{a_i},
	$$
	i.e,
	$$
	\alpha(U)=\bigcup_{i=0}^n \phi_{a_i}(U).
	$$
	Hence, by Lemma \ref{lema_autom}, since $\Geo_Z(U)=\Geo_X(U)\in \mc C$ and, by property 2., $\phi_{a_i}(X)=X$, for all $i$, we get $\Geo_X(\phi_{a_i}(U))=\Geo_{\phi_{a_i}(X)}(\phi_{a_i}(U))\in \mc C$, for all $i$. Further, observe that
	$$
	\Geo_X(\alpha(U))= \Geo_X(\bigcup_{i=0}^n \phi_{a_i}(U)) = \bigcup_{i=0}^n \Geo_X(\phi_{a_i}(U)),
	$$
	is in $\mc C$ (as it is the finite union of languages in $\mc C$).
	
	Therefore, $\tilde{L}\in \mc C$.
	
	Finally, recall that since $\ConjMinLenSL_Z(U)=\ConjGeo_Z(U)\cap \SL_Z(G)$, $\SL_Z(G)$ is regular by \cite[Proposition 2.5.1]{[ECHLPT92]}, and $\mc C$ is closed by intersection with regular languages, we conclude that $\ConjMinLenSL_Z(U)\in \mc C$.
\end{proof}

The general case, where the subset $U$ is not necessarily contained in the abelian subgroup, remains open. The challenges encountered in this broader setting are discussed and detailed in Section \ref{chap_open_quest}.

\begin{theorem}\label{virtab_final}
	Let $\mathcal{C}$ be a class of languages that is full semi-AFL and $G$ a virtually abelian group. We have that if $U$ is a subset of $G$ in $\mc C^{\bullet}(G)$, then $\alpha(U)$ is also a subset in $\mc C^{\bullet}(G)$.
\end{theorem}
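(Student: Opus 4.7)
The plan is to apply Theorem \ref{prop_fit_ger} twice: once to decompose $U$ into pieces living in $\mathcal{C}^{\bullet}(N)$, where $N$ is an abelian finite-index normal subgroup of $G$, and again at the end to reassemble $\alpha(U)$ from pieces of the same form. Let $\{b_1,\ldots,b_r\}$ be a right transversal of $N$ in $G$, and write $U = \bigcup_{i=1}^r L_i b_i$ with $L_i \in \mathcal{C}^{\bullet}(N)$ by Theorem \ref{prop_fit_ger}. Splitting each $g\in G$ uniquely as $g = m b_k$ with $m \in N$, I then have
\begin{equation*}
\alpha(U) \;=\; \bigcup_{k=1}^r b_k^{-1} \Bigl( \bigcup_{m \in N} m^{-1} U m \Bigr) b_k,
\end{equation*}
so the task reduces to controlling the inner $N$-indexed union and then its conjugates by transversal elements.

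For each $i$, since $L_i \subseteq N$ and $N$ is abelian, $m^{-1}L_im = L_i$, whence $m^{-1}L_ib_im = L_ib_i(b_i^{-1}m^{-1}b_im)$. Writing $N$ additively and letting $\tau \colon N \to N$ be the automorphism $x \mapsto b_i^{-1}xb_i$, the factor $b_i^{-1}m^{-1}b_im$ equals $(\mathrm{id}-\tau)(m)$, so as $m$ ranges over $N$ this traces the image $H_i$ of the endomorphism $\mathrm{id}-\tau$. Since $N$ is finitely generated abelian, $H_i$ is finitely generated, hence rational by Theorem \ref{AnisimovSeifert}. I then rewrite
\begin{equation*}
\bigcup_{m \in N} m^{-1}L_ib_im \;=\; L_ib_iH_i \;=\; L_i\,\tau^{-1}(H_i)\,b_i \;=:\; L'_i\,b_i,
\end{equation*}
and combine Proposition \ref{prod_rac} (product with rational subsets) with Theorem \ref{prop_restricao} (to move between $\mathcal{C}^{\bullet}(G)$ and $\mathcal{C}^{\bullet}(N)$), observing that conjugation by any fixed element of $G$ is realized as multiplication by two singletons, to conclude $L'_i \in \mathcal{C}^{\bullet}(N)$.

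To finish, I conjugate by a transversal element $b_k$: writing $b_k^{-1}b_ib_k = \mu_{i,k}\,b_{j(i,k)}$ with $\mu_{i,k}\in N$, I get $b_k^{-1}L'_ib_ib_k = (b_k^{-1}L'_ib_k)\,\mu_{i,k}\,b_{j(i,k)}$, whose coefficient lies in $\mathcal{C}^{\bullet}(N)$ by the same two principles. Regrouping the finite double union $\alpha(U) = \bigcup_{i,k}(b_k^{-1}L'_ib_k)\,\mu_{i,k}\,b_{j(i,k)}$ according to which transversal element appears on the right, and using that full semi-AFLs are closed under finite union, each collected coefficient stays in $\mathcal{C}^{\bullet}(N)$, and Theorem \ref{prop_fit_ger} delivers $\alpha(U) \in \mathcal{C}^{\bullet}(G)$. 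The main delicate point is the bookkeeping at each stage: every intermediate subset of $G$ must be checked to lie in $N$ (using normality) and every transformation applied to it must preserve $\mathcal{C}^{\bullet}(N)$; once organised around Proposition \ref{prod_rac}, Theorem \ref{prop_restricao}, and the finite generation of the $H_i$, the algebra proceeds essentially automatically.
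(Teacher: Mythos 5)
Your proposal is correct and follows essentially the same route as the paper's proof: decompose $U=\bigcup_i L_ib_i$ via Theorem \ref{prop_fit_ger}, observe that the inner union over conjugators in $N$ produces the image of the endomorphism $\mathrm{id}-\tau$ (the paper's $N(Q_{b_i}^{-1}-I)$), which is finitely generated hence rational, and then invoke Proposition \ref{prod_rac} and Theorem \ref{prop_restricao} to handle products with rational subsets and conjugation by transversal elements. The only cosmetic difference is that you regroup at the end to reapply Theorem \ref{prop_fit_ger}, whereas the paper concludes membership in $\mathcal{C}^{\bullet}(G)$ directly after the singleton multiplications; both are valid.
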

\begin{proof}
	Let $N$ be an abelian finite index free-abelian normal subgroup of $G$.

	Let $T=\{b_1,\cdots,b_n\}$ be a transversal of $N$ in $G$ and $U\in \mathcal{C}^{\bullet}(G)$. By Theorem \ref{prop_fit_ger}, we  have
		$$
	U=\bigcup_{i=1}^n U_ib_i,\ \text{ for some } U_i\in\mathcal{C}^{\bullet}(N).
	$$
	We have that
	$$
	\alpha(U)=\bigcup_{i=1}^n \alpha(U_ib_i).
	$$
	
	Since $\mathcal{C}$ is closed for unions, so is $\mc C^{\bullet}$. Hence, to conclude the intended, it suffices to prove that
	$$
	\alpha(U_ib_i)\in \mathcal{C}^{\bullet}(G).
	$$
	For each $t\in T$ we can define an automorphism of $N$ through conjugation by $t$ as follows
	\begin{equation*}
		\begin{aligned}
			\alpha_t: N&\to N \\
			n&\mapsto t^{-1}nt
		\end{aligned}.
	\end{equation*}
	Since $N\cong \mathbb{Z}^m$, we can think of the elements of $N$ through their coordinates on the basis and view $\alpha_t$ as a matrix $Q_t$ with determinant $\pm 1$, where $u\alpha_t=uQ_t.$
	
	Now, consider $g\in G$. Then $g=us$ for some $u\in N$ and $s\in T$. Hence, given $k\in U_i$, we have
	\begin{equation*}
		\begin{aligned}
			g^{-1}kb_ig &= s^{-1}u^{-1}kb_ius = s^{-1}u^{-1}k\underbrace{b_iub_i^{-1}}_{uQ_{b_i}^{-1}}\cdot b_is \\
			&= s^{-1}u^{-1}k(uQ_{b_i}^{-1})b_is = \underbrace{s^{-1}u^{-1}k(uQ_{b_i}^{-1})s}_{(u^{-1}k(uQ_{b_i}^{-1}))Q_s}\cdot s^{-1}b_is \\
			&= (ku^{-1}\cdot uQ_{b_i}^{-1})Q_s\cdot s^{-1}b_i s  \\
			&= [k\cdot u(Q_{b_i}^{-1}-I)]Q_s\cdot s^{-1}b_is.
		\end{aligned}
	\end{equation*}
	Next, notice that in order to determine $\alpha(U_ib_i)$, we have that $k$ varies in $U_i$, $u$ varies in $N$ and $s$ varies in $T$, which implies that
	$$
	\alpha(U_ib_i)=\bigcup_{s\in T} [U_i N(Q_{b_i}^{-1}-I)]Q_s\cdot s^{-1}b_is.
	$$
	
	Observe that since $N(Q_{b_i}^{-1}-I)$ is the image of  an endomorphism of $N$, thus being a finitely generated subgroup of $N$, and so, by Theorem \ref{AnisimovSeifert}, it is rational in N. Hence, by Theorem \ref{prop_restricao}, $N(Q_{b_i}^{-1}-I)$ is a rational subset of $G$. Also, by Proposition \ref{prop_restricao}, $U_i\in \C^{\bullet}(G)$, for all $i$.  Then, by Proposition \ref{prod_rac}, $U_iN(Q_{b_i}^{-1}-I)\in \C^{\bullet}(G)$ which implies that
	$$
	[U_iN(Q_{b_i}^{-1}-I)]Q_s \in \C^{\bullet}(G),
	$$
	and so,
		$$
	[U_i.N(Q_{b_i}^{-1}-I)]Q_s \cdot s^{-1}b_is \in \C^{\bullet}(G).
	$$ 
   Finally, we get
   $$
  \alpha(U_ib_i)= \bigcup_{s\in T} [U_i N(Q_{b_i}^{-1}-I)]Q_s\cdot s^{-1}b_is \in \C^{\bullet}(G).
   $$
\end{proof}

Since $\ConjGeo(K)=\ConjGeo(G)\cap \alpha(K)\pi^{-1}$ and $\ConjGeo(G)$ is regular by \cite{[CHHR16]}, we immediately obtain the following corollary.

\begin{corollary}
	Let $\mathcal{C}$ be a class of languages that is full semi-AFL and $G$ a virtually abelian group. We have that if $K\in \mc C^{\forall}(G)$, then $\ConjGeo(K)\in \mc C$.
\end{corollary}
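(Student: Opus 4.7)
The plan is to derive the corollary directly from Theorem \ref{virtab_final} by exploiting the defining equation of $\ConjGeo(K)$ and the closure properties of a full semi-AFL. Specifically, I would start from the identity
\[
\ConjGeo(K) = \ConjGeo(G) \cap \alpha(K)\pi^{-1},
\]
which is just the definition of $\ConjGeo(K)$ (recall that $\ConjGeo(U) = \ConjGeo(U,G)$).

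First I would fix a generating set $Z$ of $G$ of the type produced in \cite[Proposition 3.3]{[CHHR16]}, so that $\ConjGeo_Z(G)$ is regular (in fact piecewise testable). Next I would invoke Theorem \ref{virtab_final}: since $K \in \mathcal{C}^{\forall}(G)$, and the full semi-AFL hypothesis on $\mathcal{C}$ is strong enough to apply the theorem with the $\forall$-interpretation, we obtain $\alpha(K) \in \mathcal{C}^{\forall}(G)$. Unpacking the definition of $\mathcal{C}^{\forall}(G)$, this says exactly that $\alpha(K)\pi^{-1} \in \mathcal{C}$.

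Finally, since $\mathcal{C}$ is a full semi-AFL and in particular a cone, it is closed under intersection with regular languages. Intersecting the regular language $\ConjGeo_Z(G)$ with the language $\alpha(K)\pi^{-1} \in \mathcal{C}$ therefore yields a language in $\mathcal{C}$, and this intersection is exactly $\ConjGeo_Z(K)$. This completes the argument.

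I do not expect a serious obstacle here; the proof is essentially a one-line application of the preceding theorem together with the closure property of cones. The only point requiring mild care is making sure that the generating set is chosen so that $\ConjGeo(G)$ is regular (which is guaranteed by \cite{[CHHR16]} for a suitable $Z$), and verifying that ``full semi-AFL'' furnishes both the union closure used inside Theorem \ref{virtab_final} and the intersection-with-regular closure used in the final step. Both are immediate from the definition of a full semi-AFL.
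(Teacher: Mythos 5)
Your proposal is correct and matches the paper's own argument exactly: the authors derive the corollary from the identity $\ConjGeo(K)=\ConjGeo(G)\cap \alpha(K)\pi^{-1}$, the regularity of $\ConjGeo(G)$ for a suitable generating set from \cite{[CHHR16]}, Theorem \ref{virtab_final} giving $\alpha(K)\pi^{-1}\in\mathcal{C}$, and closure of the full semi-AFL $\mathcal{C}$ under intersection with regular languages. Your added remark about fixing the generating set $Z$ (justified since membership in $\mathcal{C}^{\forall}(G)$ is generating-set independent for a cone) is a sound precision that the paper leaves implicit.
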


The previous corollary cannot be directly applied for $\mc C^\exists(G)$-subsets. Indeed, we get that $\alpha(K)\in \C^\exists(G)$, and so, since rational subsets of virtually abelian groups are closed under intersection, then the set $\ConjGeo(U)\pi=\ConjGeo(G)\pi\cap \alpha(K)$ is rational. However, that does not immediately yield that $\ConjGeo(U)=\Geo(\ConjGeo(U)\pi)$ is a regular language, as it is not true that if $K\in \Rat(G)$, then $\Geo(K)$ is regular, as happens in virtually free groups (see \cite[Corollary 3.13]{[CS24]})

\section{Questions} \label{chap_open_quest}
This paper leaves 2 main open questions regarding Lemma \ref{minepedro} and Theorem \ref{thm_va}.
\begin{enumerate}
	\item Can we prove Lemma \ref{minepedro} for the hyperbolic case? That is, can we prove that if $G$ is a hyperbolic group and $K$ is a regular language of geodesics, then there is a regular language $L_K\subseteq \Geo(\alpha(K))$ such that, for every element $g\in K$, there is at least one fully $(1,3\delta+2R+3)$-quasireduced word in $L_K$ representing an element conjugate to $g$, where $R$ is the asynchronously fellow travel constant depending on $(1,3\delta+1,\delta)$? This would allow us to generalize the main theorem of the section to rational subsets admitting a regular language of geodesic representatives.

	\item On Theorem \ref{thm_va}, we established the desired result under the restriction to  subsets of a finite index abelian subgroup. However, the natural question concerns the general case, for which we have not yet reached any concrete conclusion.
	In particular, when following the proof strategy of \cite[Proposition 3.3]{[CHHR16]}, we encounter a question that remains open for us and currently prevents us from completing the argument. Namely: is it possible for an element of $\ConjGeo(U)$ to contain a piecewise subword (i.e. not necessarily consecutive) that belongs to $\ConjGeo(G\setminus\alpha(U))$?
	A natural first step towards answering this question would be to consider concrete classes of languages, such as subsets $U$ for which $\Geo(U)$ is regular, piecewise testable, or piecewise excluding.
\end{enumerate}

\section*{Acknowledgments}
The first author was supported by CMUP (UID/00144/2025). The second author is supported by national funds through the FCT – Fundação para a Ciência e a Tecnologia, I.P., under the scope of the projects UIDB/00297/2020 (https://doi.org/10.54499/UIDB/00297/2020) and UIDP/00297/2020 (Center for Mathematics and Applications).

The authors would like to thank Laura Ciobanu for  helpful discussions, Gemma Crowe for the inspiring seminar given for the Semigroups, Automata and Languages seminars of the Center of Mathematics of the University of Porto, which led to the start of this work, and Corantin Bodart for his valuable insights that helped improve both the statement and the proof of Theorem \ref{thm_crescimento}.

\bibliographystyle{plain}
\bibliography{Bibliografia}

 \end{document}